\newtheorem{thm}{Theorem}[section]
\newtheorem{prop}[thm]{Proposition}
\newtheorem{lem}[thm]{Lemma}
\newtheorem{conj}[thm]{Conjecture}
\theoremstyle{definition}
\newtheorem{defn}[thm]{Definition}
\theoremstyle{remark}
\newtheorem{rem}[thm]{Remark}
\newtheorem{rems}[thm]{Remarks}
\let\c@equation\c@thm
\numberwithin{equation}{section}
\definecolor{ao}{hsb}{0.67,1,1}
\definecolor{or}{hsb}{0.067,1,1}
\definecolor{green}{hsb}{0.33,1,0.5}
\title{The knot meridians of (1,1)-knot complements are CTF-detected}
\author{Qingfeng Lyu}
\address{Department of Mathematics\\
  Boston College\\
  Chestnut Hill, MA 02467}
\email{lyuqi@bc.edu}
\begin{document}

\begin{abstract}
    For any non-simple (1,1)-knot in $S^3$ or a lens space, we construct a co-oriented taut foliation in its complement that intersects the boundary torus transversely in a suspension foliation of the knot meridian, or the infinity slope. This provides new evidence for a conjecture made by Boyer, Gordon and Hu using slope detections, related to the L-space conjecture. 
\end{abstract}

\maketitle

\section{Introduction}
\label{sec:1}

In~\cite{boyer2017foliations} and~\cite{boyer2021slope}, the idea of slope detections is proposed to investigate the L-space conjecture for toroidal 3-manifolds. In particular, they describe slopes realized by co-orientable taut foliations as ``CTF-detected'':

\begin{defn}[\cite{boyer2021slope}, Definition 5.1]
    Let M be a 3-manifold with $\partial M\cong T^2$. A \textit{boundary slope} [$\alpha$] is an element in the projective space of $H_1(\partial M,\mathbb{R})$. A rational boundary slope $[\alpha]$ is
    \textit{CTF-detected} if there exists a co-oriented taut foliation $\mathcal{F}$ of $M$ intersecting $\partial M$ transversely, such that $\mathcal{F}|_{\partial M}$ is a suspension foliation of the slope [$\alpha$] (that is, a Reebless foliation with a closed leaf of slope $[\alpha]$).
\end{defn}

In this paper we focus on the case when $M$ is a knot complement, and the slope [$\alpha$] is the knot meridian. If the knot comes with a canonical framing, this slope is usually referred to as the \textit{infinity slope}. We will prove the following result:

\begin{thm}
    If $M$ is the complement of a non-simple (1,1)-knot in $S^3$ or some lens space, then the boundary slope of $M$ represented by the knot meridian is CTF-detected.
    \label{thm}
\end{thm}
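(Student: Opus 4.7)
The plan is to use the structure theorem for non-simple $(1,1)$-knots to reduce to two manageable cases, and then build the desired foliations by combining explicit local constructions with a gluing step. By work of Morimoto-Sakuma, Eudave-Mu\~noz, and others, a non-simple $(1,1)$-knot in $S^3$ or a lens space is either a torus knot (so its complement is Seifert-fibered) or a genuine satellite knot whose pattern and companion both admit $(1,1)$-descriptions; I would organize the proof around these two cases.

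In the torus-knot case, the complement is Seifert-fibered over a disk with two exceptional fibers, and the horizontal-foliation construction of Eisenbud-Hirsch-Neumann and Jankins-Neumann produces CTFs transverse to the regular fibers realizing an open interval of boundary slopes. A direct slope calculation against the Seifert framing places the meridian inside this interval, settling this case. In the satellite case, write the JSJ decomposition $M = M_{\mathrm{comp}} \cup_T M_{\mathrm{pat}}$, where $T$ is the companion torus and $M_{\mathrm{pat}}$ is the exterior of the pattern knot in a solid torus. The strategy is: (i) construct a CTF on $M_{\mathrm{pat}}$, using its $(1,1)$-structure, that restricts to a meridional suspension on $\partial M$ and to a suspension of some computable slope $\alpha$ on $T$; (ii) construct a CTF on $M_{\mathrm{comp}}$ realizing a matching slope on $T$, either inductively from the theorem itself or from known results on simpler knot exteriors; (iii) apply the Boyer-Gordon-Hu gluing theorem to assemble (i) and (ii) into a CTF on $M$ whose trace on $\partial M$ is the required meridional suspension.

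The main obstacle is step (i). One must convert the combinatorial data of the $(1,1)$-pattern into a branched surface in $M_{\mathrm{pat}}$ carrying a co-oriented taut foliation whose two boundary traces are simultaneously controlled, remaining Reebless throughout. The bridge arcs and the Heegaard torus naturally decompose $M_{\mathrm{pat}}$ into handlebody-like pieces, and one should splay leaves along the bridge arcs to force meridional trace on $\partial M$ while choosing a consistent transverse direction along $T$. Matching the slope $\alpha$ obtained on $T$ to a slope realizable on the companion side, and tracking orientations and framings so that the glued object is co-orientable and has no Reeb components across $T$, will demand the most delicate bookkeeping. If the companion is itself non-simple, a mild induction on the JSJ complexity of $M$ --- with the Seifert-fibered case above as base --- will close the argument.
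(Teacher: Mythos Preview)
Your proposal rests on a misreading of the hypothesis. In this paper ``non-simple'' does \emph{not} mean ``toroidal'' or ``not geometrically simple''; it means the knot is not a \emph{simple knot} in the sense of Hedden (Definition~\ref{def:0}): a $(1,1)$-knot that either bounds a disk or is a union of two arcs in the two compression disks. Equivalently, via Lemma~\ref{lem:0}, the reduced $(1,1)$-diagram has bigons. Thus the class of non-simple $(1,1)$-knots includes, for instance, the figure-eight knot and generically all hyperbolic $(1,1)$-knots. Your dichotomy ``torus knot or satellite'' therefore omits the main case entirely, and no amount of JSJ bookkeeping will recover it: a hyperbolic knot complement has trivial JSJ decomposition, so there is no companion piece to which to pass the problem. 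The structure theorem you invoke (Morimoto--Sakuma et al.) classifies \emph{toroidal} $(1,1)$-knots, which is a different hypothesis.

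Even setting the terminology aside, step~(i) of your satellite case is not a plan but a restatement of the difficulty: you would need to build a CTF on the pattern exterior with prescribed traces on \emph{two} boundary tori simultaneously, and you give no mechanism for this. The paper's approach is entirely different and direct. From the reduced $(1,1)$-diagram it builds a single co-oriented branched surface $\mathcal{B}$ (the Heegaard torus with the two compression disks attached, the source bigon removed, and the sink bigon punctured), checks that $N(\mathcal{B})$ is already the knot complement (Lemma~\ref{lem:5}), and then proves that $\mathcal{B}$ fully carries a lamination by an induction on the diagram: a ``carrying curve'' replacement reduces any non-simple diagram to a \emph{primitive} one (Proposition~\ref{prop:hierarchy}), the primitive base case is handled by a ``sink tube push'' splitting that kills sink disks (Proposition~\ref{prop:2}), and the inductive step passes laminations back up the hierarchy via a sequence of branched-surface manipulations (Proposition~\ref{prop:3}). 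The resulting lamination is then filled to a co-oriented foliation and a thickened annulus is glued across the cusp to produce a taut foliation meeting $\partial M$ in a meridional suspension. No JSJ decomposition, no gluing across essential tori, and no case split on the geometry of the knot is used.
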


We give two motivations for investigating CTF-detections of knot meridians.

The first motivation comes from the L-space conjecture. In~\cite{eftekhary2018bordered} it is proved that no toroidal integral homology sphere is an L-space. According to the L-space conjecture, we would expect that all irreducible toroidal integral homology spheres admit co-orientable taut foliations. By the gluing result in~\cite{boyer2021slope} (Theorem 5.2), one could prove this by analysing the CTF-detected slopes of homology solid tori. More precisely they conjectured that 

\begin{conj}[\cite{boyer2021slope}, Conjecture 1.6]
    Let $M\ncong S^1\times D^2$ be an irreducible rational homology solid torus whose longitude $\lambda_M$ is integrally null-homologous, then each rational slope of distance 1 from $\lambda_M$ (any such slope is called \textbf{meridional}) is CTF-detected.
\end{conj}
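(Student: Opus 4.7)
The plan is to induct on the complexity of the JSJ decomposition of $M$ and invoke the gluing principle (Theorem~5.2 of~\cite{boyer2021slope}) to reduce CTF-detection on $M$ to CTF-detection of compatible slopes on its geometric pieces. The base case is an atoroidal rational homology solid torus, which by geometrization is either Seifert fibered or hyperbolic. In the Seifert fibered case, one appeals to the classical theory of horizontal foliations on Seifert fibered spaces together with the Eisenbud--Hirsch--Neumann--Jankins--Naimi criterion; the condition that a given slope be CTF-detected then becomes an arithmetic condition on the Seifert invariants, and a direct check should verify that every slope at distance $1$ from $\lambda_M$ qualifies.

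For the inductive step, let $T$ be a JSJ torus adjacent to $\partial M$, separating off a piece $M_0\supset \partial M$ from the remainder $M'$. One constructs on $M_0$ a co-oriented taut foliation realizing the prescribed meridional slope on $\partial M$ together with some auxiliary slope $\beta$ on $T$, and on $M'$ a co-oriented taut foliation realizing the same $\beta$ on $T$ (extending further across JSJ tori by the inductive hypothesis). Matching these foliations is the content of the gluing theorem, whose hypotheses amount to the slopes agreeing and the holonomies being compatible along $T$. To guarantee that the matching can be performed, one should show that on each piece the set of CTF-detected slopes on each boundary torus is a non-trivial interval (or even all of the projective circle), so that a common $\beta$ is always available; this flexibility statement is the combinatorial heart of the argument and must be established concurrently with the CTF construction.

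The main obstacle is the hyperbolic atoroidal case. Here $M$ (or a hyperbolic JSJ piece $M_0$ containing $\partial M$) is a hyperbolic $3$-manifold, and one must construct CTFs realizing a prescribed meridional slope on $\partial M$ and a flexible range of slopes on each internal JSJ torus. No general-purpose construction of this type is currently known, and producing one is essentially as hard as the L-space conjecture itself in this setting. The present paper avoids the difficulty in its main theorem by exploiting the genus-one Heegaard splitting inherent to $(1,1)$-knots, which furnishes an explicit branched surface carrying the meridional slope; any attempt to push beyond the $(1,1)$ setting would have to substitute a new branched-surface or genuine-lamination input, perhaps in the spirit of Roberts' persistent laminations for fibered knots or subsequent work on slope intervals and foliar orders, adapted to carry meridional slopes on hyperbolic knot exteriors. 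Consequently, a realistic route to the full conjecture is to combine the gluing scheme above with a future lamination-theoretic construction in the hyperbolic atoroidal case; the Seifert fibered and toroidal reductions themselves should be tractable with existing technology.
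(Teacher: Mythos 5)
This statement is Conjecture~1.6 of~\cite{boyer2021slope}: the paper you are reading does not prove it and has no proof for you to be compared against. It is quoted as an open conjecture, and the paper's actual contribution (Theorem~\ref{thm}) is a special case of supporting evidence: CTF-detection of one canonical meridional slope, the knot meridian, for non-simple $(1,1)$-knot complements, obtained by an explicit branched surface built from a reduced $(1,1)$-diagram. Your proposal is likewise not a proof, and you say so yourself: the hyperbolic atoroidal case is left entirely open, and that case is not a peripheral difficulty but the heart of the conjecture --- for a hyperbolic knot exterior the JSJ decomposition is trivial, so your entire gluing-and-induction scheme reduces to exactly the problem you admit you cannot solve. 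A sketch whose base case is conceded to be ``essentially as hard as the L-space conjecture itself'' is a research program, not an argument.

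Beyond the admitted gap, two steps of the scheme are weaker than you present them. First, the Seifert fibered base case is not ``a direct check'': what \cite{boyer2021slope} actually proved is the conjecture for manifolds fibering over the circle, and the Eisenbud--Hirsch--Jankins--Neumann criterion governs existence of horizontal foliations, not the stronger boundary requirement here, namely a taut foliation meeting $\partial M$ transversely in a \emph{suspension} foliation of a \emph{prescribed} meridional slope; detection of every slope of distance $1$ from $\lambda_M$ in the Seifert case requires genuine work, not arithmetic bookkeeping. Second, your ``flexibility'' assertion --- that the set of CTF-detected slopes on each boundary torus of each piece is a nontrivial interval, so that a common gluing slope $\beta$ always exists --- is precisely the unknown content, not a lemma one can ``establish concurrently''; moreover, interior JSJ pieces generally have several boundary tori, so one needs the multislope detection formalism of~\cite{boyer2017foliations} rather than the single-boundary notion your induction is phrased in. By contrast, the paper sidesteps all of this: it never touches the JSJ decomposition or gluing, instead exploiting the genus-one Heegaard structure to build a single branched surface whose regular neighborhood \emph{is} the knot complement (Lemma~\ref{lem:5}), proving it fully carries a lamination by an induction on $(1,1)$-diagrams (Proposition~\ref{prop:1}), and extending to a taut foliation realizing the meridian. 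Your closing diagnosis --- that progress beyond the $(1,1)$ setting needs a new lamination-theoretic input in the hyperbolic case --- is accurate, but it is a statement about the state of the field, not a proof of the conjecture.
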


\cite{boyer2021slope} proved this conjecture for $M$ fibering over the circle. In particular this includes all fibered knot complements. Our result~\ref{thm} provides new evidence for the conjecture in some non-fibered cases, where we only prove CTF-detection for a canonical choice of meridional slope.

In the light of this conjecture, we expect possible generalisations to other meridional slopes, especially when the knot lives in $S^3$. In fact, according to the input from Floer homologies, many of the other meridional slopes (or $1/n$ slopes) yield non L-space surgeries~\cite{ozsvath2010knot}, and according to the L-space conjecture, we should expect the existence of taut foliations of the knot complement intersecting the boundary torus in \textit{product} foliations of these slopes. On the other hand, for our knot meridian or infinity slope, suspension foliations are the best result possible.

Another possible motivation lies in the Floer theory formulation of the Berge's conjecture. To that end, it is conjectured in~\cite{baker2008grid} that

\begin{conj}[\cite{baker2008grid}, Conjecture 1.5]
    For a knot $K\subset L$ living in some lens space $L$, if $K$ is Floer simple, then $K$ is simple.
    \label{conj-s}
\end{conj}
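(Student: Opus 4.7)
The plan is to argue by contradiction, leveraging Theorem~\ref{thm} as the main topological input. Suppose $K\subset L$ is Floer simple but not simple. The first step is to invoke existing structure results for Floer simple knots in lens spaces to force $K$ into $(1,1)$-position: Floer simplicity forces $\widehat{HFK}(L,K)$ to have the minimal rank $|H_1(L;\mathbb{Z})|$, and in many cases this is known to pin down the bridge structure closely enough to yield a $(1,1)$-presentation. Granting this reduction places the knot squarely in the hypothesis of Theorem~\ref{thm}.

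Next, Theorem~\ref{thm} produces a co-oriented taut foliation $\mathcal{F}$ on the complement $M=L\smallsetminus\nu(K)$ whose restriction to $\partial M$ is a suspension of the meridional slope. The intended second step is to convert $\mathcal{F}$ into Floer-theoretic obstructions: via Eliashberg--Thurston and Ozsv\'ath--Szab\'o, $\mathcal{F}$ perturbs to a tight contact structure with nonvanishing contact invariant, and through bordered or sutured Floer homology one would then hope to produce a strictly larger lower bound on the rank of the knot Floer complex of $(L,K)$ than what Floer simplicity allows, yielding the desired contradiction.

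The decisive obstacle is this last step. Meridional CTF-detection cannot be cashed in through a naive ``fill in and apply L-space obstructions'' move, because meridional filling returns $L$ itself (an L-space) and the suspension foliation caps off to a Reeb disk foliation on the glued solid torus. Any contradiction must therefore be extracted intrinsically from $\mathcal{F}$ on $M$, via a bordered or sutured argument tracking the precise structure of the foliation constructed in the proof of Theorem~\ref{thm}. Obtaining such an argument uniformly for all non-simple $(1,1)$-knots is the hard part, and reflects the well-known difficulty of Conjecture~\ref{conj-s}; I would expect at best partial progress in specific families, for example those in which the CTFs produced by Theorem~\ref{thm} admit explicit sutured manifold decompositions with computable invariants. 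For this reason the present paper's contribution to Conjecture~\ref{conj-s} should be viewed as supplying the essential topological input and reducing the remaining problem to a purely Floer-theoretic one on knot complements.
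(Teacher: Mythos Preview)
The statement you are attempting to prove is a \emph{conjecture}, not a theorem of the paper; the paper does not prove it and does not claim to. Indeed, immediately after stating it the paper remarks that for $(1,1)$-knots the implication ``Floer simple $\Rightarrow$ simple'' is already known by \cite{hedden2011floer}, and explicitly says ``our result is not making actual progress towards Conjecture~\ref{conj-s}.'' So there is no paper proof to compare against.

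Your proposal is also not a proof, and you essentially acknowledge this yourself. There are two genuine gaps. First, your reduction ``Floer simple $\Rightarrow$ $(1,1)$'' is unproven: you write that Floer simplicity ``in many cases'' is known to yield a $(1,1)$-presentation and then simply grant the reduction. No such general result exists; establishing it would already be a major theorem. Second, even within the $(1,1)$ class your strategy collapses: as the paper notes, the implication is already settled there by elementary means, so Theorem~\ref{thm} adds nothing to this particular conjecture. Your own discussion of the ``decisive obstacle'' correctly identifies why the CTF produced by Theorem~\ref{thm} cannot be converted into a Floer-theoretic contradiction via meridional filling. What remains is not a proof sketch but a description of why the problem is hard.
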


This conjecture is connected to slope detections by the following observation in~\cite{rasmussen2017floer}:

\begin{prop}[\cite{rasmussen2017floer}, discussions below Lemma 3.5]
    Let $L$ be an L-space. A knot $K\subset L$ is Floer simple if and only if the knot meridian is not NLS-detected.
\end{prop}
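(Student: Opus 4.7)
The plan is to translate both conditions into statements about the set of L-space filling slopes of the knot exterior $M = L \setminus \nu(K)$, using the structural results of Rasmussen--Rasmussen on Floer simple manifolds with torus boundary and the Ozsv\'ath--Szab\'o large surgery formula.

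First I would unpack the definitions. A knot $K$ in an L-space $L$ is \emph{Floer simple} when $\mathrm{rk}\,\widehat{HFK}(L,K)$ attains its minimum possible value $|H_1(L;\mathbb{Z})|$, equivalently when the exterior $M$ is a Floer simple manifold with torus boundary in the sense of Rasmussen--Rasmussen. A rational slope $[\alpha]$ is \emph{NLS-detected} precisely when every open neighborhood of $[\alpha]$ in $H_1(\partial M;\mathbb{Q})$ contains a rational slope $[\beta]$ such that $M([\beta])$ is a non-L-space, i.e., $[\alpha]$ lies in the closure of the complement of the L-space filling locus $\mathcal{L}(M)$. The meridian $\mu$ is automatically an L-space slope since $M(\mu) = L$.

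Next I would apply the Rasmussen--Rasmussen structure theorem: for any Floer simple manifold with torus boundary, $\mathcal{L}(M)$ is either empty, a single slope, or a closed interval of positive length in the Farey circle, and this interval is completely determined by the bordered invariant of $M$ (equivalently by $\widehat{HFK}(L,K)$). In our setting $\mu \in \mathcal{L}(M)$ automatically, and Floer simplicity forces $\mathcal{L}(M)$ to be a non-degenerate interval containing $\mu$ in its interior: minimality of $\widehat{HFK}$ means there are no "extra" generators that would obstruct the adjacent rational fillings from being L-spaces, as one sees by running the mapping cone formula at slopes $\tfrac{1}{n}$ accumulating to $\mu$.

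From here the equivalence follows in both directions. For the forward direction, Floer simplicity places $\mu$ in the interior of $\mathcal{L}(M)$, so an open neighborhood of $\mu$ consists entirely of L-space slopes and $\mu$ is not NLS-detected. For the reverse direction, if $\mu$ is not NLS-detected then some open neighborhood of $\mu$ lies in $\mathcal{L}(M)$; feeding this family of L-space fillings into the Ozsv\'ath--Szab\'o surgery exact triangle and the rational surgery formula bounds $\mathrm{rk}\,\widehat{HFK}(L,K)$ above by $|H_1(L;\mathbb{Z})|$, which combined with the standard lower bound from the Euler characteristic pins down the minimum rank and yields Floer simplicity.

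The main obstacle is the precise dictionary in the second step: turning the neighborhood-style condition "$\mu$ not NLS-detected" into the interval condition "$\mu \in \mathrm{int}\,\mathcal{L}(M)$" and vice versa. Once the Rasmussen--Rasmussen interval theorem is in hand this is essentially formal, but it relies on knowing that the Floer simple hypothesis is genuinely detected by the \emph{neighborhood} of the meridian rather than by a single auxiliary slope, which is the content of the mapping cone computation one must invoke.
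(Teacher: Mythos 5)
First, a caveat on the comparison: the paper does not prove this proposition at all --- it is imported verbatim from \cite{rasmussen2017floer} (the discussion below Lemma 3.5 there) and used as a black box to motivate the main theorem. So your proposal can only be measured against the cited source, and in spirit it follows the same route that reference takes: unpack ``not NLS-detected'' as $\mu\in \mathrm{int}\,\mathcal{L}(M)$, where $\mathcal{L}(M)$ is the set of L-space filling slopes of the exterior, and match this against the characterization of Floer simplicity of $K$ by the behavior of the large-surgery mapping cone, with the Rasmussen--Rasmussen interval structure of $\mathcal{L}(M)$ supplying the bridge between ``the fillings near $\mu$ of both signs are L-spaces'' and ``a neighborhood of $\mu$ lies in $\mathcal{L}(M)$.'' Both directions of your sketch (minimal rank of $\widehat{HFK}$ forces all $\hat{A}_s$ to have rank-one homology, hence large surgeries of both signs are L-spaces; conversely a two-sided family of L-space fillings fed into the mapping cone pins the rank of $\widehat{HFK}(L,K)$ down to $|H_1(L;\mathbb{Z})|$) are the standard and correct arguments.

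Two concrete corrections are needed, one of which is a genuinely false assertion as written. You claim that $K$ is Floer simple ``equivalently when the exterior $M$ is a Floer simple manifold with torus boundary in the sense of Rasmussen--Rasmussen.'' In their terminology a Floer simple manifold is one with \emph{more than one} L-space filling slope, and this is strictly weaker than Floer simplicity of the knot: the right-handed trefoil exterior has $\mathcal{L}(M)=[1,\infty]$, so it is a Floer simple manifold, yet $\mathrm{rk}\,\widehat{HFK}(S^3,K)=3>1=|H_1(S^3;\mathbb{Z})|$ and the meridian $\infty$ is an \emph{endpoint} of the interval, hence NLS-detected. The correct dictionary --- which your mapping-cone step actually establishes, so the slip is not load-bearing --- is that $K$ is Floer simple if and only if $\mu$ lies in the \emph{interior} of $\mathcal{L}(M)$; being a Floer simple manifold only guarantees a nondegenerate interval containing $\mu$, possibly on its boundary. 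Relatedly, ``$\mathcal{L}(M)$ a non-degenerate interval containing $\mu$ in its interior'' is a consequence of Floer simplicity of the knot, not of the manifold-level structure theorem alone, so the burden really does rest on the surgery-formula computation, as you say at the end. The second correction is cosmetic but should be fixed: in the standard $(\mu,\lambda)$ coordinates the slopes $\tfrac{1}{n}$ accumulate to the longitude, not to the meridian; the fillings accumulating to $\mu$ are the slopes $n\mu+\lambda$, i.e.\ large surgeries $p/q$ with $|p/q|\to\infty$ of both signs, and these are the fillings your mapping-cone argument must address.
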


While the L-space conjecture is still wide open, the only known direction is that CTF implies NLS~\cite{ozsvath2004holomorphic}~\cite{bowden2016approximating}~\cite{kazez2017c0}. A translation of this result in terms of slope detections is also given in~\cite{boyer2021slope}. It is then natural for us to propose that 

\begin{conj}
    For any non-simple knot $K\subset L$ living in some lens space $L$, the knot meridian as the boundary slope of the knot complement is CTF-detected.
\end{conj}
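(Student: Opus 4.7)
The plan is to exploit the (1,1) bridge decomposition of $M$ explicitly. Write the ambient manifold as $V_1\cup_T V_2$, a genus-one Heegaard splitting into two solid tori, chosen so that $K$ meets each $V_i$ in a single boundary-parallel arc $\alpha_i$. Removing an open tubular neighborhood of $K$ yields $M = M_1 \cup_\Sigma M_2$, where each $M_i = V_i\setminus\nu(\alpha_i)$ is a genus-two handlebody and $\Sigma = T\setminus\nu(K)$ is a twice-punctured torus whose two boundary circles are meridians of $K$ sitting on $\partial M$. The 1-bridge data on each side is encoded by a disk system consisting of the meridian disk of $V_i$ (pushed off $\alpha_i$) and a ``bridge disk'' cobounded by $\alpha_i$ and an arc in $T$; equivalently, $K$ is specified by a doubly-pointed Heegaard diagram $(T,a,b,z,w)$.

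With this setup in hand I would build the required foliation by assembling pieces along $\Sigma$. First, choose a foliation $\mathcal{F}_\Sigma$ on $\Sigma$ that is transverse to $\partial\Sigma$ and restricts on each component of $\partial\Sigma$ to the circle foliation by points, so that whatever extension we obtain on $M$ will meet $\partial M$ in the desired suspension foliation of $\mu$. Second, extend $\mathcal{F}_\Sigma$ across each handlebody $M_i$ in a taut way: the trace on the ordinary meridian disk must be a foliation by arcs with no Reeb annulus, and the trace on the bridge disk must match the meridional slope of $\partial M$. Concretely I would realize the extension as a branched surface $B_i\subset M_i$ with horizontal boundary in $\Sigma$ carrying $\mathcal{F}_\Sigma$, glue the two sides to produce $B=B_1\cup_\Sigma B_2\subset M$, and then invoke the Gabai--Oertel criterion to promote a lamination fully carried by $B$ to a co-oriented taut foliation of $M$ whose $\partial M$-trace is the target meridional suspension.

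The crux, and the place where the hypothesis that $K$ is non-simple must do real work, is verifying essentiality of $B$: no disks of contact, no half-planes of contact, and no monogons for the carried lamination. For a \emph{simple} (1,1)-knot the diagram $(T,a,b,z,w)$ is as degenerate as possible (after isotopy $a$ and $b$ meet minimally, and $M$ tends to be Seifert fibered), and one can check that any $\mathcal{F}_\Sigma$ transverse to $\partial\Sigma$ is forced to produce a compressing monogon on at least one side, so the strategy legitimately fails in that case. For non-simple $K$, the essential intersection pattern of $a$ and $b$ on $T$, together with the positions of the basepoints $z,w$, is rich enough that I expect to arrange $\mathcal{F}_\Sigma$ and its extensions simultaneously to avoid all forbidden configurations. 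The main obstacle I foresee is precisely this combinatorial case analysis: translating ``non-simple'' into an explicit condition on the bridge data, producing an admissible $\mathcal{F}_\Sigma$ from it, and then verifying by hand that the resulting $B$ satisfies the Gabai--Oertel hypotheses so that the taut foliation can be extracted.
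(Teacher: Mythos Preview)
First, note that the statement you are attempting is listed in the paper as a \emph{conjecture}: it is not proved there. The paper establishes only the special case where $K$ is a (1,1)-knot (Theorem~\ref{thm}). Your very first sentence already assumes a (1,1) bridge decomposition of $K$, so at best you are addressing that special case, not the conjecture as stated. An arbitrary non-simple knot in a lens space need not be (1,1), and nothing in your outline indicates how to proceed otherwise.

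Restricting now to the (1,1) case, your plan and the paper's proof both start from the doubly-pointed genus-one Heegaard diagram and aim for a branched surface carrying a lamination, but the architectures diverge sharply. You propose to cut $M$ along the twice-punctured Heegaard torus into two genus-two handlebodies, choose a 1-dimensional foliation $\mathcal{F}_\Sigma$ on that surface, extend it over each side by separate branched surfaces $B_1,B_2$, and then glue. The paper does not split $M$ this way at all: it builds a \emph{single} branched surface $\mathcal{B}$ directly from the diagram (the Heegaard torus together with both compression disks, with the source bigon deleted and the sink bigon punctured), observes that $N(\mathcal{B})$ is already the knot complement, and then runs an elaborate induction on (1,1)-diagrams---reducing via ``carrying curves'' to a primitive diagram and performing explicit ``sink tube push'' splittings guided by rational-tangle combinatorics---to make the branched surface sink-disk free and hence laminar.

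The substantive gap in your proposal is exactly where you flag it yourself: you say that for non-simple $K$ the intersection pattern of $a$ and $b$ ``is rich enough that I expect to arrange $\mathcal{F}_\Sigma$ and its extensions simultaneously,'' but you give no mechanism for doing so. In the paper this step is the entire content of Section~3: a hierarchy of (1,1)-diagrams, an auxiliary branched surface at each stage, and a careful splitting scheme that tracks double points and sink corners through several layers of collapsing. None of that machinery is visible in your outline, and there is no reason to believe a generic choice of $\mathcal{F}_\Sigma$ will avoid monogons or sink disks on both handlebodies at once. Your side remark that for simple knots ``$M$ tends to be Seifert fibered'' is also not the operative obstruction; the relevant dichotomy is whether the reduced diagram has bigons, and the paper's construction requires them in order to define the source/sink bigons that organize the entire branched surface.
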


We remark that it is rather easy to show that if $K$ is 1-bridge living in some lens space, then $K$ Floer simple $\Rightarrow$ $K$ simple (\cite{hedden2011floer}). Hence our result is not making actual progress towards Conjecture~\ref{conj-s}. On the other hand, this connection indicates that our result is sharp among (1,1)-knots, in that the knot meridians of simple knots are not CTF-detected.

%% We also remark that for a (1,1)-knot in $S^1\times S^2$ with the knot complement a rational homology solid tori, its knot meridian is actually the longitude.

%The current method is not sensitive to the homology of the total space (in fact, the longitude is not necessarily integrally null-homologous here), and also heavily relies on diagrammatic analysis on the (1,1)-diagrams. Should there be possible generalisations beyond (1,1)-knots, we expect a better control of homology and an algorithm to find nice diagrams to be established.

~\\

\textbf{Organisation of the paper.} In section~\ref{sec:2}, we review the related theories of (1,1)-knots and branched surfaces, prove some technical lemmas, and construct our branched surface for a non-simple (1,1)-knot complement. In section~\ref{sec:3}, we use an inductive argument to show that our branched surface fully carries a lamination. In section~\ref{sec:4}, we turn this lamination into a taut foliation of the (1,1)-knot complement.

~\\

\textbf{Acknowledgements} The author owes many thanks to his advisor Tao Li for countless helpful conversations and tremendous support throughout the writing of this paper. The author thanks the editor and referees for many kind and helpful corrections, comments, and suggestions.

\section{Preparations}
\label{sec:2}

\subsection{(1,1)-diagrams}
\label{subsec:2.1}

We recall some basic properties of (1,1)-knots and (1,1)-diagrams. A (1,1)-knot is a knot that can be represented by some (1,1)-diagram, or \textit{doubly pointed Heegaard diagram of genus one}, denoted as $(\Sigma,\alpha,\beta,z,w)$, where $\Sigma$ represents the torus, $\alpha,\beta$ the curves of the Heegaard diagram and $z,w$ the two basepoints. The knot is recovered by taking properly embedded, boundary-parallel arcs in each solid torus connecting the two basepoints while avoiding the compression disks. For the purpose of this paper we fix an orientation of $\Sigma$ and assume the two curves $\alpha,\beta$ are oriented.

Given $(\Sigma,\alpha,\beta,z,w)$ a doubly pointed Heegaard diagram of genus 1, one can isotope the two curves $(\alpha,\beta)$ into minimal position. Once this is done, it is easy to see that each bigon in the diagram contains at least one basepoint. The resulting (1,1)-diagram is called \textbf{reduced}.

For a reduced (1,1)-diagram, we can use a standard square to represent the torus $\Sigma$, and place the $\alpha$ curve in \textbf{standard position} - the horizontal boundaries of the square (see Figure~\ref{fig:1}). Now $\beta$ is cut into arcs, and we call the arcs connecting the same side of $\alpha$ \textbf{rainbow arcs} and the other arcs connecting different sides \textbf{vertical arcs}. Notice the rainbow arcs are exactly those forming bigons with $\alpha$. For the purpose of this paper, we assume the manifold determined by $(\Sigma,\alpha,\beta)$ is not $S^1\times S^2$, so there must be some vertical arcs.

\begin{figure}[!htb]
    \begin{overpic}{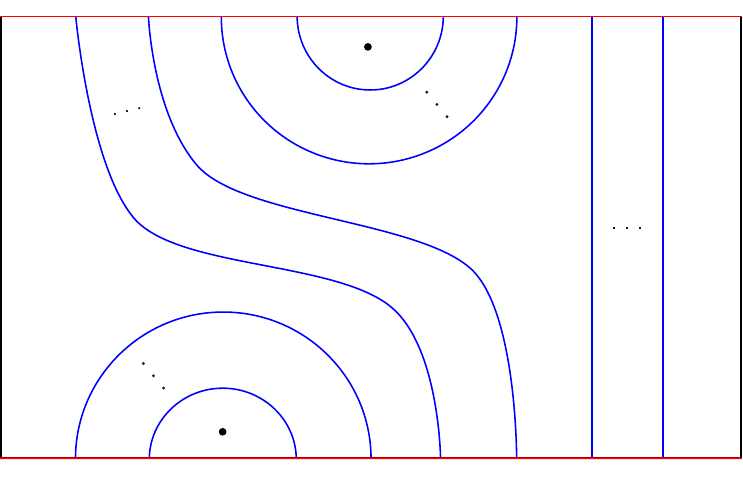}
        \put(27,7){$z$}
        \put(46,59){$w$}
        \put(96,4.5){$\color{red}\alpha$}
        \put(90.5,55){$\color{ao}\beta$}
        \put(9.5,0){$1\;\;\;2\;\;\;...$}
        \put(89,0.5){$p$}
        \put(7.3,64){$s+1\;\;\;...$}
        \put(18,12){$q$}
        \put(18,47){$r$}
        \put(56,49){$q$}
        \put(64.5,40){$(p-2q-r)$}
    \end{overpic}
    \caption{Reduced (1,1)-diagram parametrized by $(p,q,r,s)$}
    \label{fig:1}
\end{figure}

We now claim every bigon contains exactly 1 basepoint. Suppose $\alpha$ is in standard position and there is a rainbow arc bounding two basepoints with $\alpha$. It follows that any bigon on the other side of $\alpha$ cannot bound any basepoint, thus there is no rainbow arc on the other side. But this is impossible, since the number of rainbow arcs on both sides of $\alpha$ should be equal to make the number of $(\alpha,\beta)$-intersection points counted on both sides of $\alpha$ equal.  We also notice that each basepoint can only be bounded by rainbow arcs on the same side of $\alpha$.

It follows that the reduced (1,1)-diagram must look like Figure~\ref{fig:1}. According to the notation of~\cite{rasmussen2005knot}, the reduced diagram is then parametrized by a 4-tuple $(p,q,r,s)$, where $p$ is the number of intersections of $\alpha$ and $\beta$, $q$ is half the number of the rainbow arcs, $r$ is the number of ``leaning'' vertical arcs (as depicted in the picture) and $s$ is the twist occured when identifying the horizontal boundaries $\alpha$, such that the first $\beta$-thread in the ceiling is connected to the $(s+1)$-th $\beta$-thread on the floor.

A reduced (1,1)-diagram is called \textbf{simple} if there are no bigons. In this paper we mainly deal with non-simple (1,1)-diagrams, since simple diagrams would represent simple knots. We first recall the following definition of simple knots:

\begin{defn}[\cite{hedden2011floer}, Definition 1.2]
    A (1,1)-knot represented by some (1,1)-diagram $(\Sigma,\alpha,\beta,z,w)$ is called \textit{simple} if either it bounds a disk, or it can be isotoped to a union of two properly embedded arcs in the two compression disks respectively.
    \label{def:0}
\end{defn}

\begin{lem}
    A simple (1,1)-diagram $(\Sigma,\alpha,\beta,z,w)$ represents a simple knot.
    \label{lem:0}
\end{lem}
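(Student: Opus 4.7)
The plan is to isotope the knot $K$ directly into $D_\alpha \cup D_\beta$. The no-bigon hypothesis forces $q = 0$ in the $(p,q,r,s)$-parametrization from Figure~\ref{fig:1}, so every arc of $\beta \setminus \alpha$ is vertical and $\Sigma \setminus (\alpha \cup \beta)$ decomposes into $p$ parallelogram regions whose corners all lie on $\alpha \cap \beta$.

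Recall that $K$ is built by choosing arcs $a \subset \Sigma \setminus \alpha$ and $b \subset \Sigma \setminus \beta$ from $z$ to $w$ and pushing them into $H_\alpha$ and $H_\beta$ respectively. Let $R_z, R_w$ denote the parallelogram regions containing $z$ and $w$. If $R_z = R_w$, then both $a$ and $b$ may be taken inside this one region, where they are automatically isotopic rel endpoints; the isotopy between them sweeps out a small embedded disk in $Y$ bounded by $K$, so $K$ bounds a disk and the first clause of Definition~\ref{def:0} is satisfied.

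In the main case $R_z \neq R_w$, I take $a, b$ to be the \emph{short} (minimally-crossing) representatives in their respective annular complements, and then select corners $x \in \overline{R_z} \cap \alpha \cap \beta$ and $y \in \overline{R_w} \cap \alpha \cap \beta$ compatible with both arcs. I isotope $K$ in $Y$ by sliding $z \to x$ and $w \to y$ through short paths in $R_z$ and $R_w$. After this slide, the arc in $H_\alpha$ has its endpoints on $\alpha$ and, crucially, zero winding number around the core of the solid torus $H_\alpha$, so it is isotopic rel endpoints to a properly embedded arc $\gamma_\alpha \subset D_\alpha$; a symmetric argument produces $\gamma_\beta \subset D_\beta$. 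Thus $K$ is isotopic to $\gamma_\alpha \cup \gamma_\beta$, satisfying the second clause of Definition~\ref{def:0}.

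The main obstacle is producing a single pair of corners $(x, y)$ that simultaneously makes both resulting arcs have zero winding in their respective handlebodies. This reduces to a combinatorial verification using the grid-like structure on $\Sigma$ imposed by $q = 0$: near each basepoint the local picture is simply a parallelogram cell with no rainbow arcs, which allows the potential winding of $a$ in the annulus $\Sigma \setminus \alpha$ and of $b$ in $\Sigma \setminus \beta$ to be cancelled by a single coordinated choice of endpoint slides.
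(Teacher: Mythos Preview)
Your overall strategy matches the paper's: put $\alpha$ in standard position, observe that with no bigons the regions are all parallelograms, dispose of the case $R_z=R_w$, and in the remaining case slide the basepoints to corners so that each arc can be pushed onto its compressing disk. But you stop exactly at the point that needs proving. You correctly identify the ``main obstacle'' as producing a \emph{single} pair of corners $(x,y)$ that works simultaneously for both handlebodies, and then you defer this to an unspecified ``combinatorial verification using the grid-like structure.'' That verification is the content of the lemma; without it the argument is incomplete.

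The paper's fix is a single uniform choice: move each of $z,w$ to the \emph{bottom-right} corner of its parallelogram. ``Bottom'' puts both endpoints on the same side of $\alpha$, and ``right'' puts them on the same side of $\beta$, so the same choice works for both curves at once. Once both endpoints of the $H_\alpha$-arc approach $D_\alpha$ from the same side, cutting $H_\alpha$ along $D_\alpha$ gives a ball in which the arc is properly embedded with both endpoints on one copy of $D_\alpha$; such an arc is automatically isotopic rel endpoints into that disk. The same holds for $D_\beta$. Your ``zero winding number'' framing is a heavier substitute for this same-side observation, and your appeal to ``short representatives'' is unnecessary: in the cut-open ball any properly embedded arc with endpoints on one boundary disk goes onto that disk, with no minimality or winding hypothesis needed.
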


\begin{proof}
    We can place $\alpha$ in standard position. Then since there are no bigons, $\beta$ cosists of only vertical arcs. Moreover, $\Sigma$ is cut into quadrilateral pieces by $\alpha$ and $\beta$. If $z,w$ live in the same quadrilateral, then the knot would bound a disk. If not, we can move $z,w$ to the bottom-right corners of the quadrilaterals they live in; notice we can isotope the knot accordingly, such that the two knot arcs connecting $z,w$ still avoid the compression disks except at $z,w$. Moreover, since after our isotopy the knot arc avoiding the $\alpha$-compression disk intersects it on the same side, we can further isotope this knot arc \textit{onto} the compression disk as a properly embedded arc. Similarly, we can isotope the other knot arc onto the $\beta$-compression disk. Hence the knot is simple.
\end{proof}

\begin{figure}[!hbt]
    \begin{overpic}[scale=0.6]{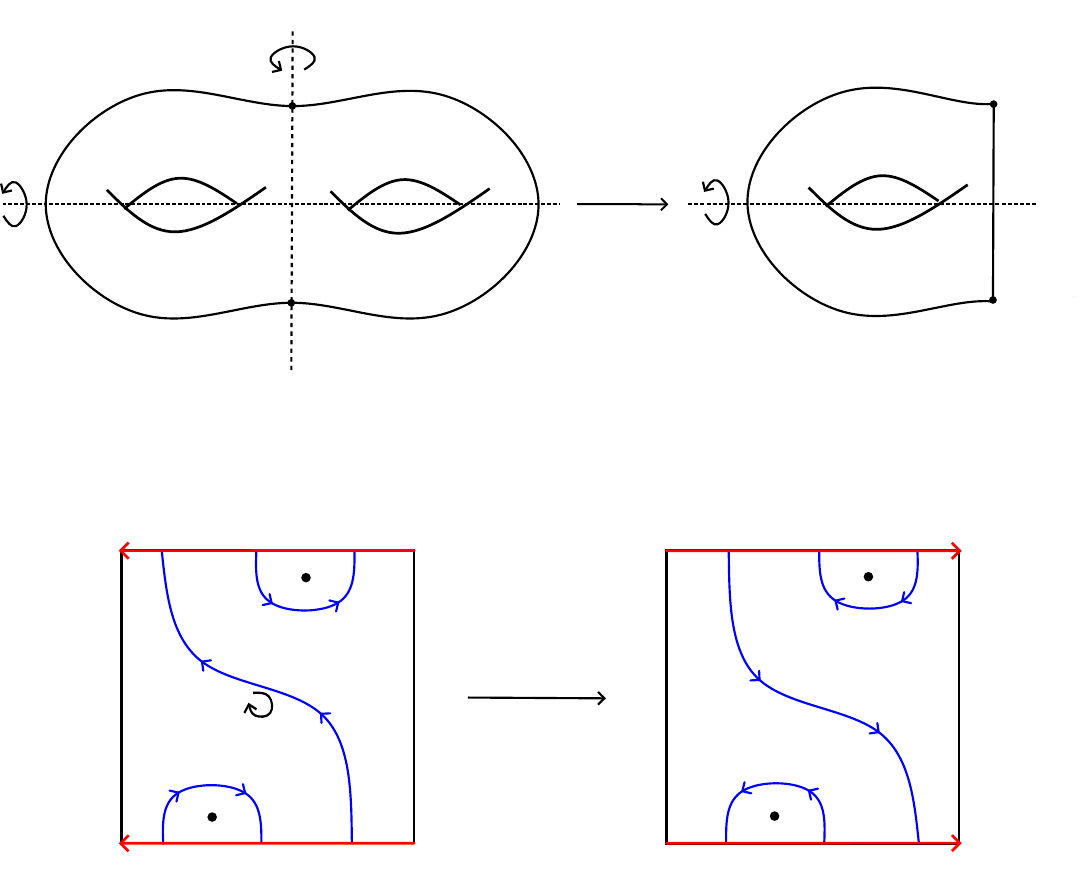}
        \put(50,42){$(a)$}
        \put(50,0){$(b)$}
        \put(25,54){$z$}
        \put(92,53){$z$}
        \put(24.5,70){$w$}
        \put(92,72){$w$}
        \put(2.3,59){$\tilde{h}$}
        \put(67,59){$h$}
        \put(29,77){$\tau$}
        \put(24,13){$h$}
        \put(16,28){$\color{ao}\beta$}
        \put(68,28){$\color{ao} h(\beta)$}
        \put(36,1.5){$\color{red}\alpha$}
        \put(86,0.5){$\color{red}h(\alpha)$}
        \put(17,5){$z$}
        \put(25,28){$w$}
        \put(72,5){$z$}
        \put(81,28){$w$}
    \end{overpic}
    \caption{Hyperelliptic involution}
    \label{fig:2}
\end{figure}

We can regard the torus $\Sigma$ with 2 basepoints $(z,w)$ as a quotient orbifold of a genus-2 surface by a $(\mathbb{Z}/2\mathbb{Z})$-action generated by $\tau$, as shown in Figure~\ref{fig:2}.$(a)$. Then the hyperelliptic involution of the genus-2 surface $\tilde{h}$ descends to $h$ a $\pi$-rotation of the square representing $\Sigma$, exchanging the two basepoints, see Figure~\ref{fig:2}.$(a)(b)$. On the other hand, we know the hyperelliptic involution $\tilde{h}$ fixes all isotopy classes of simple closed curves, reversing orientations of the non-separating ones while preserving those of the separating ones~\cite{haas1989geometry}. Since $\alpha$ and $\beta$ are essential in $\Sigma$, their lifts are non-separating, and hence their orientations are reversed by $h$. Since the diagram obtained by doing $\pi$-rotation is still reduced, we know $(\Sigma,h(\alpha),h(\beta),z,w)$ is actually the same (1,1)-diagram with all orientations reversed. See Figure~\ref{fig:2}.$(b)$. This gives us an important symmetry of the (1,1)-diagrams that we will use later. %For example, the innermost rainbow arcs on both sides appears in the same direction.

We still need some additional analysis on the (1,1)-diagram for later use, which uses the fact that $\alpha,\beta$ are essential simple closed curves. We consider the regions of $\Sigma$ cut off by $\alpha$ and $\beta$. Suppose $\alpha$ is placed in standard position, then for any non-simple (1,1)-diagram (as depicted in Figure~\ref{fig:1}), there are two bigons each containing one basepoint, two hexagons or one octagon (depending on whether there are two kinds or one kind of vertical arcs) bounded by both rainbow arcs and vertical arcs, and many quadrilaterals bounded by either rainbow arcs or vertical arcs. We remark that although we use the standard position of $\alpha$ to identify these regions, the partition into these regions (that there are two bigons and mostly quadrilaterals) does not rely on the position of $\alpha$. From now on when we say bigons in a reduced (1,1)-diagram, we refer to the regions cut out by the $\alpha$ and $\beta$ curves, i.e. only the \textit{innermost} bigons.

\begin{defn}
    Fixing orientations of $(\Sigma,\alpha,\beta)$, for each $\alpha$-arc cut out by $\beta$, we say it is ($\beta$-)\textbf{sink} if it always lies to the left of the $\beta$ curve at its endpoints, ($\beta$-) \textbf{source} if it always lies to the right of the $\beta$ curve at endpoints, and ($\beta$-)\textbf{parallel} otherwise. See Figure~\ref{fig:3} $a\sim c$. It follows immediately that for a quadrilateral region its two boundary $\alpha$-arcs must be of the same type. We call it a ($\beta$-)\textbf{sink sector} if its boundary $\alpha$-arcs are sink, a ($\beta$-)\textbf{source sector} if its boundary $\alpha$-arcs are source, and a ($\beta$-)\textbf{parallel sector} otherwise. See Figure~\ref{fig:3} $d\sim f$. In addition, we call a bigon region a ($\beta$-)\textbf{sink bigon} if its boundary $\alpha$-arc is sink, and a ($\beta$-)\textbf{source bigon} if its boundary $\alpha$-arc is source.
    \label{def:1}
\end{defn}

\begin{figure}[!hbt]
    \begin{overpic}[scale=0.6]{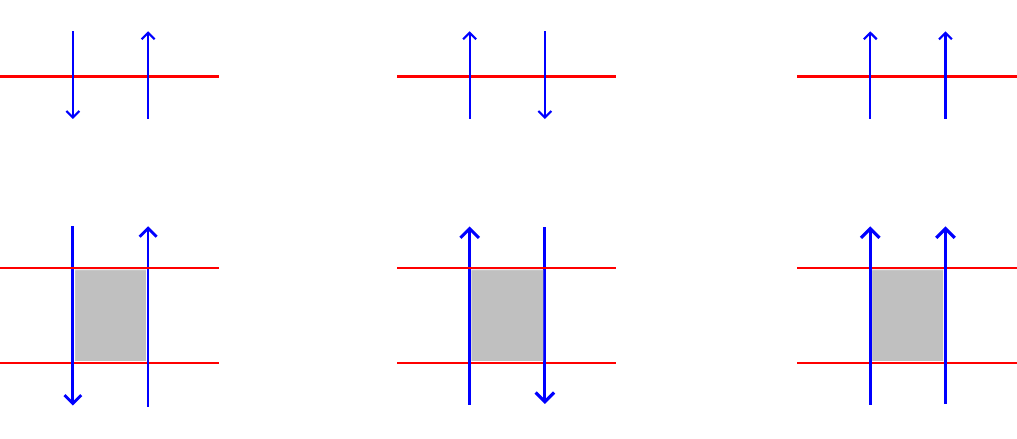}
        \put(22,34){$\color{red}\alpha$}
        \put(16,40){$\color{ao}\beta$}
        \put(4,26){$a.\;$sink arc}
        \put(42,26){$b.\;$source arc}
        \put(80,26){$c.\;$parallel arc}
        \put(2.5,0){$d.\;$sink sector}
        \put(40,0){$e.\;$source sector}
        \put(78.4,0){$f.\;$parallel sector}
    \end{overpic}
    \caption{Arcs and sectors}
    \label{fig:3}
\end{figure}

~\\
\begin{rem}~\
    \begin{enumerate}
        \item One can also define $\alpha$-sink, source, parallel $\beta$-arcs by interchanging $\alpha,\beta$ in the above definition. This allows us to define $\alpha$-sink, source, parallel sectors and $\alpha$-sink, source bigons in the same sense. In later discussions we might omit the $\alpha$- or $\beta$- prefixes, as long as this does not cause confusion in the context.
        \item By the symmetry from the hyperelliptic involution, if a bigon is $\beta$-sink, then the other bigon must be $\beta$-source and vice versa. In fact, the other bigon is the image of the first bigon under hyperelliptic involution, reversing the orientations of the boundary arcs. While the $\pi$-rotation keeps the boundary $\alpha$-arc to be sink, reversing the orientation changes it from sink to source. It follows that there is exactly one $\beta$-sink bigon and one $\beta$-source bigon.
    \end{enumerate}
    \label{rem:1}
\end{rem}

\begin{prop}
    In a non-simple (1,1)-diagram, there is a unique sink $\alpha$-arc and a unique source $\alpha$-arc among the boundary $\alpha$-arcs of the hexagons or octagon.
    \label{lem:1}
\end{prop}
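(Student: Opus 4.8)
The plan is to translate the sink/source/parallel trichotomy of Definition~\ref{def:1} into a purely combinatorial statement about the cyclic sequence of signs at the points of $\alpha\cap\beta$, and then to read off what happens on the boundary of the hexagon/octagon directly from the standard picture of Figure~\ref{fig:1}.

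First I would fix the induced orientations and assign to each of the $p$ points of $\alpha\cap\beta$ a sign $\varepsilon\in\{+,-\}$ recording the direction in which $\beta$ crosses $\alpha$ (equivalently, on which side of $\beta$ the outgoing $\alpha$-direction lies). A direct inspection of the local pictures in Figure~\ref{fig:3}$a\sim c$ shows that, reading the intersection points cyclically along $\alpha$, the $\alpha$-arc between two consecutive points $x_i,x_{i+1}$ is sink exactly when $(\varepsilon_i,\varepsilon_{i+1})=(-,+)$, source exactly when $(\varepsilon_i,\varepsilon_{i+1})=(+,-)$, and parallel exactly when $\varepsilon_i=\varepsilon_{i+1}$ (with the sign convention fixed so that this holds). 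Thus the sink and source $\alpha$-arcs are precisely the sign changes of the cyclic word $\varepsilon_1\cdots\varepsilon_p$, and in particular the two types occur in equal total numbers.

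Next I would isolate the hexagon/octagon. By the region analysis preceding Definition~\ref{def:1}, these transition regions are bounded by the two outermost rainbow arcs (one from the stack at $z$, one from the stack at $w$) together with the extreme vertical arcs of the $r$- and $(p-2q-r)$-families, so their boundary consists of a controlled number of $\alpha$-arcs (four for the octagon, $3+3$ for the two hexagons). Each rainbow arc is a $\beta$-U-turn, hence contributes one $+$ endpoint and one $-$ endpoint; each bounding vertical arc contributes two endpoints whose signs are governed by the direction in which $\beta$ traverses it. I would then trace these endpoint signs around the boundary and read off the resulting sink and source arcs from the trichotomy above.

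The decisive structural input is the hyperelliptic symmetry of Remark~\ref{rem:1}: since $h$ preserves the diagram while reversing all orientations, it interchanges sink and source $\alpha$-arcs and carries the set of hexagon/octagon boundary $\alpha$-arcs to itself (fixing the unique octagon, or permuting the two hexagons). Hence the sink and source arcs on this boundary occur in $h$-paired bijection, so their counts agree; call the common value $n$. For the octagon there are only four boundary $\alpha$-arcs, so $n\in\{0,1,2\}$; it is nonzero because a boundary rainbow arc has oppositely signed ends, forcing a sign change along the complementary boundary arcs. The main obstacle, which I expect to require the explicit parametrization $(p,q,r,s)$ and a split into the one-octagon and two-hexagon cases, is to rule out $n=2$: this amounts to exhibiting at least one \emph{parallel} boundary arc, hence to controlling the traversal directions (and thus the endpoint signs) of the extreme vertical arcs where they abut the rainbow stacks, carefully accounting for the twist $s$.
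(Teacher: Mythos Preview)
Your framework is sound---encoding sink/source/parallel via sign changes along $\alpha$, and using the hyperelliptic symmetry to equate the sink and source counts on the hexagon/octagon boundary---but the proposal stops at exactly the point where the content lies. You explicitly defer ruling out $n=2$ (and, in the hexagon case, implicitly $n=0$ and $n=3$) to a case analysis in the parameters $(p,q,r,s)$. That analysis is neither carried out nor clearly finite in the way you describe, and in fact the paper's proof avoids it entirely via two short structural arguments that you are missing.

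For the single-octagon case, the paper uses a parity count rather than a direct sign trace. Sum the number of sink $\alpha$-arcs over all complementary regions: each sink arc borders two regions, so the total is even. Sink quadrilateral sectors contribute $2$ each and the unique sink bigon (Remark~\ref{rem:1}) contributes $1$, so the octagon must contribute an odd number of sink arcs. With only four boundary $\alpha$-arcs and equal numbers of sink and source arcs, this forces $n=1$ immediately---no need to rule out $n=0$ or $n=2$ separately.

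For the two-hexagon case, the paper uses a connectivity argument. A direct enumeration shows each hexagon has either $0$ or $2$ parallel boundary $\alpha$-arcs (the ``cycle'' versus ``cocycle'' configurations of the three bounding $\beta$-arcs). If a hexagon had $0$ parallel arcs, it would have all three arcs sink (or all source), and by the hyperelliptic involution the other hexagon would be the opposite. But since $\beta$ is an essential simple closed curve, the two hexagons must be joined by a chain of quadrilateral sectors glued along $\alpha$-arcs, and through such quadrilaterals sink arcs only meet sink arcs. An all-sink hexagon could therefore never reach an all-source hexagon---contradiction. So each hexagon has $2$ parallel arcs and one sink-or-source arc; symmetry then gives exactly one of each.

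Both arguments are parameter-free and replace the case analysis you proposed. Your sign-sequence translation is a reasonable bookkeeping device, but on its own it does not supply the parity or connectivity input that actually pins down $n=1$.
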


\begin{proof}
    First suppose we have two hexagons, and we look at their $\alpha$-boundary arcs. By enumeration we know there are either 0 parallel $\alpha$-arcs (when the oriented $\beta$-arcs form a ``cycle'') or 2 parallel $\alpha$-arcs (when the $\beta$-arcs form a ``cocycle''), see Figure~\ref{fig:4}. We claim that it must be the second case. In fact, if there is a hexagon with all 3 boundary $\alpha$-arcs sink(resp. source), then by the symmetry from hyperelliptic involution the other hexagon would have all 3 boundary $\alpha$-arcs source(resp. sink). Now since $\beta$ is an essential simple closed curve in $\Sigma$, the two hexagons should be connected by pasting regions along the $\alpha$-arcs. In particular the regions connecting the hexagons should be some (possibly zero) quadrilaterals. However, along quadrilaterals sink $\alpha$-arcs can only connect to sink $\alpha$-arcs by pasting along the sink sectors, so the two hexagons cannot be connected by such pastings. This contradiction implies that each hexagon has 2 parallel boundary $\alpha$-arcs, plus 1 sink arc or 1 source arc. By hyperelliptic involution again, there is exactly 1 sink arc and 1 source arc.

    \begin{figure}[!hbt]
        \begin{overpic}{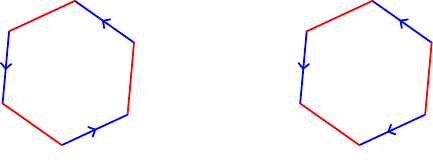}
            \put(11.5,-3){cycle}
            \put(78,-3){cocycle}
        \end{overpic}
        \caption{Hexagons}
        \label{fig:4}
    \end{figure}

    In the case of having one octagon, we claim that the number of its sink $\alpha$-arcs must be odd. In fact, when counting sink $\alpha$-arcs by regions, each sink $\alpha$-arc is counted twice and we should eventually get an even number; on the other hand, the sink sectors contribute 2 each and the single sink bigon contributes 1 (see Remark~\ref{rem:1}), so the octagon should also contribute an odd number. Moreover, by the symmetry from hyperelliptic involution the number of sink and source boundary $\alpha$-arcs of the octagon must be equal, so there must be a unique sink and a unique source boundary $\alpha$-arc.
\end{proof}

The boundary sink $\alpha$-arc of the sink bigon connects the bigon to a sink sector or the sink $\alpha$-arc of the hexagon or octagon. If connected to a sink sector, the other boundary $\alpha$-arc of the sink sector is then connected to another sink sector or the hexagon or octagon. It follows that the sink bigon is eventually connected to the sink $\alpha$-arc of the hexagon or octagon by pasting sink sectors along the boundary $\alpha$-arcs, see Figure~\ref{fig:5}. Similar things happen for the source case.

\begin{defn}
    In a non-simple (1,1)-diagram, the ($\beta$-)\textbf{sink} \textbf{tube} is the union of ($\beta$-)sink sectors that connect the ($\beta$-)sink bigon to the sink boundary $\alpha$-arc of the hexagons or octagon, see Figure~\ref{fig:5}. The ($\beta$-)\textbf{source tube} is the subregion of ($\beta$-)source sectors that connect the ($\beta$-)source bigon to the source boundary $\alpha$-arc of the hexagons or octagon.
    \label{def:2}
\end{defn}

\begin{figure}[!bht]
    \begin{overpic}[scale=0.7]{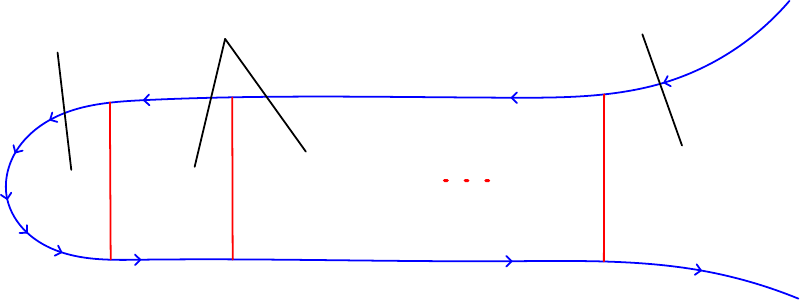}
        \put(-5,33){sink bigon}
        \put(20,34){sink sectors}
        \put(55,35){hexagon or octagon}
    \end{overpic}
    \caption{($\beta$-)sink tube}
    \label{fig:5}
\end{figure}

Notice that by Proposition~\ref{lem:1} the sink and source tubes are well defined for the (1,1)-diagram. It also follows almost immediately from the definition that

\begin{prop}
    The ($\beta$-)sink (resp. source) tube contains all ($\beta$-)sink (resp. source) sectors.
    \label{lem:2}
\end{prop}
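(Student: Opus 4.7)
The plan is to set up a dual graph $G$ encoding the $\beta$-sink structure of the diagram and then exclude stray cyclic components via a following-$\beta$ argument.

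First I would define $G$ to have vertices the regions of $\Sigma\setminus(\alpha\cup\beta)$ containing at least one boundary $\beta$-sink $\alpha$-arc, and edges the $\beta$-sink $\alpha$-arcs themselves, each joining the two regions it separates. By Definition~\ref{def:1}, Remark~\ref{rem:1}, and Proposition~\ref{lem:1}, the vertex set is precisely the collection of all $\beta$-sink sectors, together with the unique $\beta$-sink bigon and the one hexagon or octagon carrying the unique sink boundary $\alpha$-arc of the hexagon/octagon region. The valences are equally immediate: every sink sector has valence $2$, while the sink bigon and the sink-containing hexagon/octagon each have valence $1$. Hence $G$ is a disjoint union of simple arcs and simple circles; any arc-component must have both endpoints among the two valence-$1$ vertices, so $G$ has at most one arc-component, and that arc-component is exactly the sink tube of Definition~\ref{def:2}. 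Every remaining component of $G$ is then a simple circle consisting entirely of sink sectors, and excluding such circles will finish the proof.

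The hard part is to rule out such a circle. Suppose for contradiction that sink sectors $S_1,\dots,S_k$ are cyclically glued along interior sink $\alpha$-arcs $a_1,\dots,a_k$, with $a_i$ shared between $S_i$ and $S_{i+1}$. Call a point of $\alpha\cap\beta$ a \emph{cycle vertex} if it is an endpoint of some $a_i$; the key local observation is that at each cycle vertex $v$, both quadrants of $\Sigma$ on the $\beta$-sink side of $v$ are sink sectors of the cycle. Following $\beta$ from such a $v$ in one direction, the outgoing $\beta$-arc separates a sink sector of the cycle (on its $\beta$-sink side) from some other region (on the other side), so its far endpoint $v^+$ lies on some $S_j$ and is therefore again a cycle vertex. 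Iterating, every intersection of $\alpha\cap\beta$ along $\beta$ is a cycle vertex. But now consider the unique sink boundary $\alpha$-arc of the sink bigon: its two endpoints are then cycle vertices, and the sink bigon itself occupies one of the two $\beta$-sink quadrants at each such endpoint. This would force the sink bigon to be a sink sector of the cycle, contradicting the fact that the sink bigon is a bigon and not a quadrilateral.

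Therefore no circular component of $G$ exists, $G$ reduces to its unique arc-component, and every sink sector lies on it, i.e.\ in the sink tube. The source case follows by an identical argument with ``source'' in place of ``sink'', or alternatively by the hyperelliptic symmetry recorded in Remark~\ref{rem:1}.
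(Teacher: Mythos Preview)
Your argument is correct, and your first paragraph is exactly the paper's proof: a sink sector is adjacent across each of its two sink $\alpha$-arcs only to another sink sector, the sink bigon, or the hexagon/octagon, so the chain through any given sink sector runs from the bigon to the polygon and hence lies in the tube. The paper stops there and leaves implicit why this chain cannot close up into a cycle disjoint from the bigon and polygon; your second paragraph supplies that missing step.

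Your following-$\beta$ argument for excluding cycles is correct but can be said more directly. A putative cycle of sink sectors glued along their $\alpha$-edges forms an embedded annulus $A\subset\Sigma$ whose interior $\alpha$-arcs are the $a_i$; at every corner the boundary of $A$ therefore continues along $\beta$, so $\partial A\subset\beta$. Since $\beta$ is a single simple closed curve, both boundary circles of $A$ coincide with $\beta$, which forces $A$ to be all of $\Sigma$ cut along $\beta$, i.e.\ every region of the diagram would be a sink sector --- contradicting the existence of the sink bigon. Your walk along $\beta$ is exactly a combinatorial tracing of $\partial A$, and your final contradiction (the sink bigon occupying a left-of-$\beta$ quadrant) is the same one.
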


\begin{proof}
    For each ($\beta$-)sink sector, along the boundary $\alpha$-arcs it can only connect to another sink sector, the sink bigon, or the sink $\alpha$-arc of the hexagon or octagon. It follows that by pasting sectors along the $\alpha$-arcs it eventually must have one end connecting to the sink bigon and the other end connecting to the hexagon or octagon, thus lies in the sink tube. Similar argument works for the source case.
\end{proof}

\subsection{Branched surfaces}
\label{subsec:2.2}

Our main tool for constructing laminations and foliations is the branched surface. Recall that a \textbf{branched surface} $\mathcal{B}$ is a compact space locally modelled on Figure~\ref{fig:6}.$(a)$. The subset of points that have no neighborhood isomorphic to $\mathbb{R}^2$ is called the \textbf{branch locus} $L(\mathcal{B})$. According to the local picture we can think of the branch locus as a collection of transversely-intersecting immersed curves in $B$. In this paper we sometimes refer to the immersed curves as \textbf{cusp curves}, as they look like cusps in some complementary components of the branched surface. Points where the immersed curves intersect are called \textbf{double points}. Away from the double points, we can define the \textbf{branch direction} on branch locus up to homotopy, s.t. the direction is tangent to the branched surface, transverse to the branch locus, and always points to the side with fewer components, see Figure~\ref{fig:6}.$(a)$. Connected components of $\mathcal{B}-L(\mathcal{B})$ are called \textbf{sectors} of the branched surface. The branched surface is called \textbf{co-oriented} if each sector is orientable, and assigned with an orientation so that these orientations agree at the branch locus, see Figure~\ref{fig:6}.$(a)$.

\begin{figure}[!hbt]
    \begin{overpic}[scale=0.6]{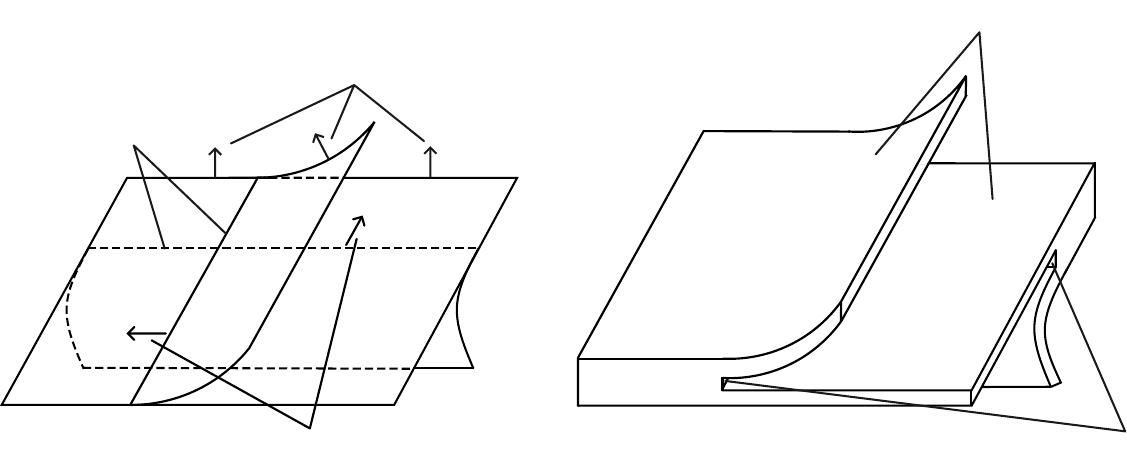}
        \put(0,29){branch locus}
        \put(20,35){co-orientations}
        \put(28,1){branch direction}
        \put(85,40){$\partial_h$}
        \put(99,0){$\partial_v$}
        \put(20,0){$(a)$}
        \put(70,0){$(b)$}
    \end{overpic}
    \caption{Branched surface and its regular neighborhood}
    \label{fig:6}
\end{figure}

For a branched surface $\mathcal{B}$, we can define its regular neighborhood $N(\mathcal{B})$ to be an $I$-bundle over the branched surface. This can be constructed by taking a trivial $I$-bundle for each sector and then pasting them along the branch locus so that $N(\mathcal{B})$ is locally modelled on Figure~\ref{fig:6},$(b)$. We can then define the \textbf{horizontal boundary} $\partial_h N(\mathcal{B})$ to be the boundary of $N(\mathcal{B})$ consisting of the boundaries of the fibers, and the \textbf{vertical boundary} $\partial_v N(\mathcal{B})$ to be the remaining boundary of $N(\mathcal{B})$, which consists of interior segments of $I$-fibers at the branch locus and possibly also fibers transverse to the boundary of the branched surface. For a branched surface with circle boundary, the horizontal boundary of its regular neighborhood is a disjoint union of compact oriented surfaces and the vertical boundary a disjoint union of annuli.

As we regard $N(\mathcal{B})$ as an $I$-bundle over $\mathcal{B}$, there is a bundle projection $\pi:N(\mathcal{B})\rightarrow \mathcal{B}$ collapsing the fibers. Notice that $\pi(\partial_v N(\mathcal{B}))=L(\mathcal{B})\cup \partial \mathcal{B}$.

An important bridge between branched surfaces and foliations would be the laminations. A lamination here is a closed subset of a 3-manifold foliated by (possibly noncompact) properly embedded surfaces. A lamination $\mathcal{L}\subset N(\mathcal{B})$ is said to be \textit{fully carried} by $\mathcal{B}$ if it is transverse to the $I$-fibers and has projection image $\pi(\mathcal{L})=\mathcal{B}$. After possibly replacing these surfaces by (surface $\times I$) and taking (surface $\times \partial I$), we can isotope the lamination so that $\partial_h N(\mathcal{B})\subset \mathcal{L}$ (or equivalently, retract the horizontal boundary onto $\mathcal{L}$ along the fibers). 

Once $\partial_h N(\mathcal{B})\subset \mathcal{L}$ for $\mathcal{B}$ co-oriented, we know that the metric completion $N(\mathcal{B})_{\mathcal{L}}$ of the complement $N(\mathcal{B})-\mathcal{L}$ is a disjoint union of trivial $I$-bundles over oriented surfaces, and thus $\mathcal{L}$ can be trivially extended to a (co-oriented) foliation $\mathcal{F}$ of $N(\mathcal{B})$, transverse to the $I$-fibers.

It follows that to construct a foliation, sometimes it suffices to construct a branched surface that fully carries a lamination. However, usually such foliations may not be taut. Historically,~\cite{gabai1989essential} introduced essential laminations as an analog of taut foliations, and discussed when laminations fully carried by certain branched surfaces are essential. 

\begin{defn}
    A lamination $\mathcal{L}$ in a 3-manifold $M$ is called \textit{essential} if 
    \begin{enumerate}[(i)]
        \item it contains no sphere leaf,
        \item leaves of $\mathcal{L}$ are incompressible and end-incompressible, and
        \item components of $M_{\mathcal{L}}$ (metric completion of $M-\mathcal{L}$) are irreducible.
    \end{enumerate}
    \label{def:3}
\end{defn}

\cite{gabai1989essential} found that, if a branched surface satisfies certain ``incompressibility conditions'', and has no disk of contact (a \textit{disk of contact} is a disk $D\subset N(\mathcal{B})$ transverse to the fibers and such that $\partial D\subset \mathrm{int}(\partial_v N(\mathcal{B})$)), then it only fully carries essential laminations. However, such branched surfaces may not fully carry any lamination. This situation was later improved in~\cite{li2002laminar}, by defining the \textit{laminar branched surfaces}:

\begin{defn}
    A branched surface $\mathcal{B}$ in a closed, oriented 3-manifold $M$ is called \textit{laminar} if 
    \begin{enumerate}[(i)]
        \item $\partial_h N(\mathcal{B})$ is incompressible in $M-int(N(\mathcal{B}))$, no component of $\partial_h N(\mathcal{B})$ is a sphere, and $M-int(N(\mathcal{B}))$ is irreducible (where $int(X)$ is the interior of $X$),
        \item there is no monogon in $M-int(N(\mathcal{B}))$,
        \item there is no Reeb component (i.e. $\mathcal{B}$ does not carry any torus that bounds a solid torus in $M$), and
        \item there is no sink disk (a \textit{sink disk} is a disk branch sector where the branch directions at boundary always point inwards, see Figure~\ref{fig:7}).
    \end{enumerate}
    \label{def:4}
\end{defn}

\begin{figure}[!hbt]
    \begin{overpic}[scale=0.6]{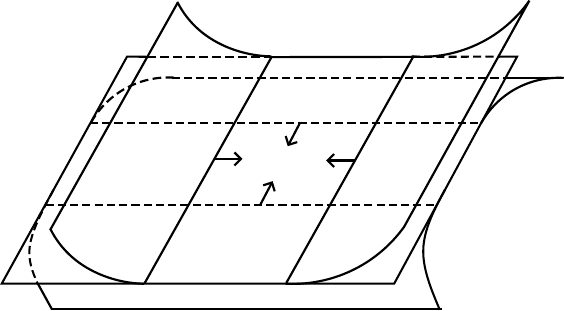}
        
    \end{overpic}
    \caption{Sink disk}
    \label{fig:7}
\end{figure}

Conditions i) to iii) here are exactly the ``incompressibility conditions'' suggested in~\cite{gabai1989essential}, where i) corresponds to incompressibility and ii) corresponds to end-incompressibility. The improvement comes from iv), in that

\begin{thm}[\cite{li2002laminar}, Theorem 1]
    Laminar branched surfaces in closed, oriented 3-manifolds fully carry laminations.
    \label{thm:li}
\end{thm}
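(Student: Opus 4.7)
The plan is to construct a lamination fully carried by $\mathcal{B}$ via weighted branched surface theory. Concretely, the goal is to find strictly positive weights $w_\sigma > 0$ on each sector $\sigma$ satisfying the switch conditions at the branch locus; such weights then assemble into a lamination by stacking $w_\sigma$ parallel sheets transversely through the $I$-fibers of $N(\mathcal{B})$ and matching them across the vertical boundary.

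First, I would encode the switch conditions as a finite-dimensional linear system. Each smooth component of $L(\mathcal{B})$ locally meets three sectors, and the branch direction distinguishes a ``single-sector'' side from a ``two-sector'' side, yielding a linear equation $w_{\mathrm{single}} = w_{\mathrm{left}} + w_{\mathrm{right}}$. Compiling all such equations gives a polyhedral cone $C \subset \mathbb{R}_{\ge 0}^{\text{sectors}}$ of nonnegative solutions, and the task reduces to showing that $C$ has nonempty interior.

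Next comes the main step: verifying that condition (iv) of Definition~\ref{def:4} (no sink disks) is precisely what forces $C$ to contain a strictly positive vector. The heuristic is that a sink disk $D$ only ever appears on the ``single'' side of a switch equation along its boundary, so $w(D)$ absorbs weight without ever transmitting it to a neighbor; one can then consistently set $w(D) = 0$ while keeping other weights positive, so $D$ is not carried. Conversely, without sink disks, I would try a combinatorial propagation argument: seed positive weight on some sector whose boundary branch arc flows outward, and iteratively force positivity on neighbors via switch relations, with the finiteness of the sector set guaranteeing termination. An alternative I would pursue in parallel is a dynamical ``splitting'' approach, where one cuts $\mathcal{B}$ along a piece of $L(\mathcal{B})$ emanating from each branch arc; without sink disks the splitting process can be iterated indefinitely, and its Hausdorff limit produces the lamination. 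Translating the local geometric obstruction of sink disks into a global statement about $C$ (or about termination of splittings) is the main technical hurdle, and represents the essential improvement of Li's framework over~\cite{gabai1989essential}.

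Finally, once a positive weight vector is in hand, realizing it as a lamination is standard: rational weights yield a finite-leaf lamination after clearing denominators, while irrational weights arise as Hausdorff limits of rational approximations in the space of codimension-one laminations in $N(\mathcal{B})$. Conditions (i)--(iii) of Definition~\ref{def:4}, by contrast, play no direct role in producing $\mathcal{L}$; they would be invoked afterwards to upgrade $\mathcal{L}$ to an essential lamination in the sense of Definition~\ref{def:3}, using the incompressibility and monogon arguments of~\cite{gabai1989essential} adapted to this setting.
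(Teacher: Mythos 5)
This statement is quoted from~\cite{li2002laminar} (Theorem 1 there); the paper you are reading does not prove it but only cites it, so there is no internal proof to compare against. Judged on its own terms, your proposal has a genuine gap at its core. The primary strategy --- producing strictly positive weights $w_\sigma$ satisfying the switch conditions and stacking parallel sheets --- can only ever produce a lamination with an invariant transverse measure, and a branched surface admits such a positive weight vector if and only if it fully carries a \emph{measured} lamination. This is a strictly stronger condition than being laminar: sink-disk-freeness does not imply that the cone $C$ has nonempty interior, and the main content of Li's theorem is precisely to handle branched surfaces (e.g.\ those carrying foliations with nontrivial holonomy, such as the ones built in Section 4 of this paper) for which no positive solution to the branch equations exists. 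So ``verifying that condition (iv) is precisely what forces $C$ to contain a strictly positive vector'' is not a technical hurdle to be overcome; it is a false statement. Your supporting heuristic is also off: for a sink disk $D$ the switch relations read $w(D)=w(A)+w(B)$ with $D$ on the absorbing side, so you cannot set $w(D)=0$ while keeping the neighbors positive, and conversely sink disks are not the obstruction to positivity of weights --- they are the obstruction to continuing the \emph{splitting} process.

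Your parenthetical ``alternative'' is in fact the correct route and is essentially what Li does: one performs an infinite sequence of splittings along the branch locus, using the absence of sink disks to guarantee that the splitting never gets stuck at a disk sector, and extracts the lamination as an inverse limit, with Gabai's gluing operation (quoted here as Lemma~\ref{lem:4}) used to close up boundary circles. But as written this is a one-sentence sketch of the entire theorem; the work lies in choosing the splittings coherently so that the limit is a closed set foliated by properly embedded surfaces that still meets every $I$-fiber, and none of that is supplied. Finally, conditions (i)--(iii) are not purely cosmetic for the existence statement: ruling out disk and sphere components of $\partial_h N(\mathcal{B})$ and monogons is needed for the construction itself (compare Lemma~\ref{lem:3} of this paper, which isolates exactly the hypotheses used to get a lamination, versus the remaining ones used for essentiality).
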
 

Since laminar branched surfaces have no disk of contact (\cite{li2002laminar}, Lemma 2.2), it follows immediately that the laminations carried are essential. On the other hand, if we do not require the laminations to be essential, we can drop certain ``incompressibility conditions'' of the branched surface. In particular we will use the following lemma:

\begin{lem}
    Let $\mathcal{B}$ be a co-oriented branched surface with boundary a union of circles. If $\mathcal{B}$ is sink disk free, not carrying any torus, and no component of $\partial_h N(\mathcal{B})$ is a disk or a sphere, then $\mathcal{B}$ fully carries a lamination.
    \label{lem:3}
\end{lem}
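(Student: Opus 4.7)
The plan is to adapt the proof of Theorem~\ref{thm:li} from \cite{li2002laminar}, keeping careful track of which clauses of Definition~\ref{def:4} are used for which conclusion. Li's construction produces, from any sink-disk-free branched surface, a nested sequence of regular neighborhoods whose intersection is a candidate lamination; the incompressibility clause (i) and the no-monogon clause (ii) enter only when one upgrades this lamination to an essential one. Our three hypotheses --- sink-disk-freeness, no carried torus, and no disk or sphere component of $\partial_h N(\mathcal{B})$ --- are precisely what is required to run the construction and verify that the output is a genuine lamination fully carried by $\mathcal{B}$.

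Concretely, I would carry out the following steps in order. First, set $\mathcal{B}_0 := \mathcal{B}$ and iteratively split $\mathcal{B}_i$ along a branch sector that admits a splitting, producing a nested sequence $N(\mathcal{B}) = N(\mathcal{B}_0) \supset N(\mathcal{B}_1) \supset \cdots$; Li's local combinatorial analysis shows that if no sector of $\mathcal{B}_i$ admits a valid splitting then $\mathcal{B}_i$, and hence $\mathcal{B}$, must contain a sink disk, contrary to hypothesis. Second, define $\mathcal{L} := \bigcap_i N(\mathcal{B}_i)$ and verify directly from the construction that $\mathcal{L}$ is closed, transverse to the $I$-fibration, and satisfies $\pi(\mathcal{L}) = \mathcal{B}$, so $\mathcal{L}$ is fully carried. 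Third, use the hypothesis that no component of $\partial_h N(\mathcal{B})$ is a disk or sphere --- a property preserved under splitting --- to rule out the degenerate possibility that $\mathcal{L}$ is empty or consists only of disk/sphere leaves. Fourth, invoke the no-torus hypothesis to exclude the remaining degenerate output, namely that $\mathcal{L}$ collapses to a single torus leaf bounding a solid torus (the Reeb-type obstruction excluded by clause (iii) in Li's definition).

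The main obstacle I anticipate is the gap between Li's framework, which is phrased for closed branched surfaces in closed orientable $3$-manifolds, and our setting, where $\mathcal{B}$ has boundary a union of circles lying in $\partial_v N(\mathcal{B})$. Splitting moves along sectors that meet $\partial \mathcal{B}$ must be defined so they never fold across the boundary, and Li's local argument --- that the failure of any admissible splitting forces the existence of a sink disk --- has to be reverified in the presence of boundary circles, treating boundary cusps as terminal features that do not themselves contribute sink disks. Once this bookkeeping near $\partial \mathcal{B}$ is in place, the rest of Li's argument transfers essentially verbatim and produces the desired lamination.
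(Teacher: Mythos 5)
Your proposal is a genuinely different route from the paper's. The paper does not re-open Li's construction at all: it reduces to the closed case by a capping trick. Concretely, it sews a once-punctured genus-$2$ surface onto each boundary circle of $\mathcal{B}$ to obtain a boundaryless branched surface $\mathcal{B}'$ (this creates no new sink disks, since the new sectors are not disks), observes that every component of $\partial N(\mathcal{B}')$ then has non-positive Euler characteristic because no component of $\partial_h N(\mathcal{B})$ is a disk or sphere, and glues irreducible $3$-manifolds with incompressible boundary onto $\partial N(\mathcal{B}')$. In the resulting closed manifold $\mathcal{B}'$ is laminar --- condition (i) of Definition~\ref{def:4} holds by the choice of capping manifolds, (ii) because $\mathcal{B}$ is co-oriented, (iii) because no torus is carried, (iv) because there is no sink disk --- so Theorem~\ref{thm:li} applies as a black box and the lamination restricts to one fully carried by $\mathcal{B}$. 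Your plan instead re-derives the existence half of Li's theorem directly for branched surfaces with boundary, which, if carried out, would yield a somewhat more self-contained and potentially more general statement; but the step you defer ("re-verify Li's local argument in the presence of boundary circles, treating boundary cusps as terminal features") is precisely the hard technical content of \cite{li2002laminar}, and your description of that argument as a greedy splitting procedure terminating only at a sink disk is a substantial simplification of what Li actually does. The point of the paper's proof is that none of this bookkeeping near $\partial\mathcal{B}$ is necessary: the three hypotheses of the lemma are exactly what is needed to make the capped-off branched surface laminar. So your approach is not wrong, but it leaves its heaviest step unexecuted, whereas the intended argument dissolves that step entirely.
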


\begin{proof}
    We can sew in a punctured genus 2 surface to each circular boundary component to get a branched surface $\mathcal{B}'$ without boundary. Consider the regular neighborhood $N(\mathcal{B}')$. Since still no component of horizontal boundary is a disk or a sphere, all components of $\partial N(\mathcal{B}')$ must have Euler characteristic $\leq 0$. It follows that we can cap these boundary components off by pasting some irreducible 3-manifolds with incompressible boundary. Now $\mathcal{B}'$ is laminar in the resulting closed 3-manifold (no monogons since co-oriented), thus fully carries a lamination by Theorem~\ref{thm:li}. $\mathcal{B}$ as a subsurface also fully carries a lamination.
\end{proof}

We also quote a gluing lemma by Gabai here, which is actually used in~\cite{li2002laminar} to prove the main theorem, and will also be used in our proof later.

\begin{lem}[\cite{gabai1992taut}, Operation 2.4.4, modified in \cite{li2002laminar}, Lemma 3.4]
    Let $\mathcal{B}$ be a branched surface with boundary a union of circles. Suppose $\mathcal{B}$ fully carries a lamination without disk leaves. Let $c_1,c_2$ be two boundary components of $\mathcal{B}$. Then the branched surface $\mathcal{B'}$ obtained by gluing $c_1$ and $c_2$ also fully carries a lamination.
    \label{lem:4}
\end{lem}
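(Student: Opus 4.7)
The plan is to modify the given lamination $\mathcal{L}$ near the two boundary annuli so that it matches up under the gluing and then descends to a lamination on $\mathcal{B}'$. Write $A_1,A_2\subset\partial_v N(\mathcal{B})$ for the annuli in the vertical boundary lying over $c_1,c_2$, and let $\phi:A_1\to A_2$ be the fiber-preserving identification induced by gluing $c_1$ to $c_2$, so that $N(\mathcal{B}')$ is obtained from $N(\mathcal{B})$ by identifying $A_1$ with $A_2$ via $\phi$. Since $\mathcal{L}$ is transverse to the $I$-fibers of $N(\mathcal{B})$, its intersection $\lambda_i:=\mathcal{L}\cap A_i$ is a 1-dimensional lamination on $A_i$ whose leaves are either core-parallel closed curves or arcs running between the two boundary circles of $A_i$. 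If $\phi(\lambda_1)=\lambda_2$ on the nose, then $\mathcal{L}$ descends directly to a lamination on $N(\mathcal{B}')$ fully carried by $\mathcal{B}'$, and we are done.

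In general $\lambda_1$ and $\phi^{-1}(\lambda_2)$ do not coincide, so the next step is to modify $\mathcal{L}$ inside disjoint product collars $A_i\times[0,\epsilon]$ to achieve the matching. The local operation is a \emph{splitting} of $\mathcal{L}$ along a non-disk leaf piece: one replaces a leaf segment near $A_i$ with a pair of parallel sheets separated by a small $I$-fiber gap, which has the effect of duplicating a prescribed leaf of $\lambda_i$. Choose a common refinement $\mu$ of $\lambda_1$ and $\phi^{-1}(\lambda_2)$, meaning a 1-dimensional lamination on $A_1$ whose leaves include isotopic copies of every leaf of both $\lambda_1$ and $\phi^{-1}(\lambda_2)$. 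By performing finitely many such splittings on $\mathcal{L}$ near each $A_i$, one replaces each $\lambda_i$ by $\mu$ (respectively $\phi(\mu)$), producing a new lamination $\mathcal{L}'$ in $N(\mathcal{B})$ still fully carried by $\mathcal{B}$ that satisfies $\phi(\mathcal{L}'\cap A_1)=\mathcal{L}'\cap A_2$. The no-disk-leaves hypothesis is essential here, since each splitting requires duplicating a non-disk leaf piece: parallel-duplicating a disk leaf would either be topologically obstructed or would produce new disk or sphere leaves, and one must also verify that after each splitting the lamination still has no disk leaves so that subsequent splittings remain legitimate.

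After the modification, identifying $A_1$ with $A_2$ via $\phi$ glues $\mathcal{L}'$ to itself along the common 1-dimensional lamination $\mu$, producing a lamination $\mathcal{L}''\subset N(\mathcal{B}')$ that is transverse to the $I$-fibers and projects onto $\mathcal{B}'$ under the bundle projection of $N(\mathcal{B}')$, as desired. The main obstacle is the combinatorial bookkeeping in the modification step: one must construct the common refinement $\mu$, realize each splitting of the 1-dimensional laminations as a global modification of $\mathcal{L}$ in $N(\mathcal{B})$, and verify that the no-disk-leaf and full-carrying properties persist at every stage. This is precisely the role of the no-disk-leaf hypothesis---to guarantee the legitimacy of each individual splitting---and it is the technical heart of Gabai's operation 2.4.4 as refined in~\cite{li2002laminar}.
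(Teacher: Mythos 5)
The paper does not actually prove this lemma: it is quoted verbatim from Gabai (Operation 2.4.4) as modified in Li's Lemma 3.4, and is used as a black box. So there is no in-paper proof to compare against, and I will assess your argument on its own terms.

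Your overall strategy (modify $\mathcal{L}$ near the two vertical annuli until the induced one-dimensional boundary laminations agree under $\phi$, then glue) is the right general shape, but the step carrying all the weight does not work as stated. First, the leaves of $\lambda_i=\mathcal{L}\cap A_i$ are complete $1$-manifolds transverse to the $I$-fibers of $A_i$, hence either circles isotopic to the core $c_i$ or non-compact lines winding infinitely around $A_i$ (Denjoy-type behaviour, arising whenever a leaf of $\mathcal{L}$ has non-compact boundary); they are never arcs joining the two boundary circles of $A_i$, and your description omits precisely the non-compact case that causes the difficulty. Second, and decisively, finitely many splittings cannot convert $\lambda_1$ into a prescribed common refinement $\mu\supset\phi^{-1}(\lambda_2)$: a splitting only inserts a parallel copy of an already-present leaf, so every new leaf it creates has the same boundary behaviour on $A_1$ as the leaf it copies, and the operation can neither change the holonomy of $\lambda_1$ nor alter the homeomorphism type of its transversal beyond adding finitely many points. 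If, say, $\lambda_1$ is a single circle while $\phi^{-1}(\lambda_2)$ has a Cantor-set transversal with nontrivial holonomy, or consists entirely of non-compact winding leaves, then no sequence of duplications of leaves of $\mathcal{L}$ near $A_1$ produces boundary data matching $\phi^{-1}(\lambda_2)$. Matching the two boundary laminations together with their transversals and holonomies is exactly the hard content of Gabai's operation, which proceeds by a genuinely different device (interpolating and spiralling leaves across the glued annulus rather than matching the two sides leaf-for-leaf), and your proposal assumes this step away. Relatedly, your explanation of the no-disk-leaf hypothesis is not correct: there is no obstruction to taking parallel copies of a disk leaf inside its product neighbourhood, and doing so produces disks, not spheres. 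That hypothesis is needed when leaves from the two sides are joined or spiralled across the glued annulus, not in the duplication step.
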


\subsection{Branched surfaces associated to non-simple (1,1)-diagrams}
\label{subsec:2.3}

Now we define a branched surface associated to a non-simple (1,1)-diagram, which we will eventually prove to fully carry a lamination. The construction is inspired by~\cite{li2022taut}.

\begin{defn}
    Let $(\Sigma,\alpha,\beta,z,w)$ be a non-simple reduced (1,1)-diagram. We orient the two curves so that the boundaries of the two innermost bigons appear clockwise and anti-clockwise (recall we fix an orientation of $\Sigma$). We call the clockwise bigon \textit{source bigon} and the anticlockwise one \textit{sink bigon}. Notice our definition here complies with our earlier definitions of ($\alpha,\beta$-)sink and source bigons in Definition~\ref{def:1}, in the sense that the source bigon is both $\alpha$-source and $\beta$-source, and the sink bigon is both $\alpha$-sink and $\beta$-sink.
    
    We construct a branched surface from the torus $\Sigma$ and the two compression disks bounded by $\alpha,\beta$ on different sides of $\Sigma$ (we call them \textbf{$\alpha$-} and \textbf{$\beta$-disks} respectively). We always assume that the $\alpha$-disk is on the negative side of $\Sigma$ and the $\beta$-disk is on the positive side. Smooth the disks according to orientations so that branch direction points to the left-hand side of the oriented curves (when observing on the oriented torus from the positive side). Then remove the interior of the source bigon and a small open disk in the interior of the sink bigon (one can think of this as puncturing a hole in the sink bigon). The resulting branched surface is called the \textit{branched surface associated to the (1,1)-diagram}. 
    \label{def:5}
\end{defn}

\begin{rem}
    It is a general argument that after removing a source disk (i.e. a disk branch sector where branch directions at boundary always point outwards) we get a new branched surface. Moreover, the branch locus of the new branched surface is the original branch locus \title{minus} the boundary of the source disk. See Figure~\ref{fig:source}.

    \begin{figure}[!hbt]
        \begin{overpic}[scale=0.5]{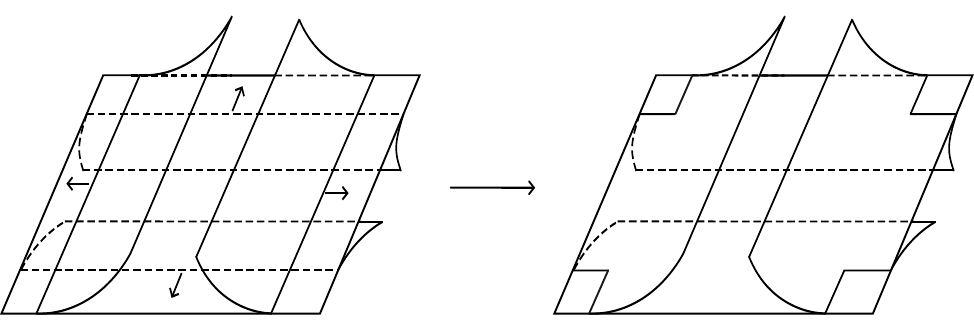}
            
        \end{overpic}
        \caption{Removing a source disk}
        \label{fig:source}
    \end{figure}
\end{rem}

From now on unless otherwise specified, we will assume our reduced (1,1)-diagrams are \textbf{oriented so that there exists an associated branched surface}. Notice that the two possible orientations here for a non-oriented reduced (1,1)-diagram (to have an associated branched surface) differ by the hyperelliptic involution, hence the associated branched surfaces are actually homeomorphic (with homeomorphism induced by the homeomorphism of the diagrams). Hence it makes sense for us to talk about the branched surface associated to a non-simple reduced (1,1)-diagram, without specifying its orientations in advance.

Usually when constructing (taut) foliations using branched surfaces, one needs to extend the foliation from the branched surface neighborhood $N(\mathcal{B})$ to the desired 3-manifold $M$. With our construction this is quite easy:

\begin{lem}
    Let $\mathcal{B}$ be the branched surface associated to a non-simple reduced (1,1)-diagram $(\Sigma,\alpha,\beta,z,w)$. Then its regular neighborhood $N(\mathcal{B})$ is homeomorphic to the corresponding (1,1)-knot complement.
    \label{lem:5}
\end{lem}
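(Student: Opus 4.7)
The plan is to verify directly that $M \setminus \mathrm{int}(N(\mathcal{B}))$ is a regular neighborhood of $K$; the claim then follows by taking the closed complement in $M$. Write $M = T_\alpha \cup_\Sigma T_\beta$ for the genus-$1$ Heegaard splitting, and $K = \gamma_\alpha \cup \gamma_\beta$, where each $\gamma_i \subset T_i$ is a boundary-parallel arc from $z$ to $w$ avoiding the compression disk $D_i$. Throughout, only the underlying 2-complex structure of $\mathcal{B}$ matters, since the smoothings affect the branched structure but not the homeomorphism type of $N(\mathcal{B})$.

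First I would compute the complement of the unmodified 2-complex $\Sigma \cup D_\alpha \cup D_\beta$. Cutting each solid torus $T_i$ along $D_i$ yields a 3-ball, so $M \setminus \mathrm{int}(N(\Sigma \cup D_\alpha \cup D_\beta))$ is a disjoint union $B_1 \sqcup B_2$ of two 3-balls, with $B_i \subset T_i$. Because $\gamma_i$ is a boundary-parallel arc in $T_i$ that avoids $D_i$, it persists inside $B_i$ as an unknotted arc with endpoints on the annular piece of $\partial B_i$, so the pair $(B_i, \gamma_i)$ is homeomorphic to $(D^2 \times I, \{0\} \times I)$.

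Next, choose the small open disk removed from the sink bigon to be a neighborhood $D_k$ of the basepoint it contains; then the two ``windows'' $B_s$ and $D_k$ in $\Sigma$ each contain exactly one of $z, w$, and $K$ meets $\Sigma$ only through their interiors. Passing from $N(\Sigma \cup D_\alpha \cup D_\beta)$ to $N(\mathcal{B})$ deletes the thickenings $N(B_s)$ and $N(D_k)$; dually, $M \setminus \mathrm{int}(N(\mathcal{B}))$ is obtained from $B_1 \sqcup B_2$ by attaching these two ``tube'' 3-balls, each glued along one disk on $\partial B_1$ and one disk on $\partial B_2$. Attaching $N(B_s)$ joins $B_1$ and $B_2$ through a pair of disks, producing a single 3-ball; attaching $N(D_k)$ then glues two disks on the boundary of this 3-ball through another ball, which is a 1-handle attachment. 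The resulting manifold is therefore a solid torus $V$.

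To identify $V$ with $N(K)$, I would collapse $V$ onto $K$ piece by piece: the two pairs $(B_i, \gamma_i)$, together with the tube pairs $(N(B_s), K \cap N(B_s))$ and $(N(D_k), K \cap N(D_k))$, are all trivial $(D^2 \times I, \{0\} \times I)$ pairs, so each deformation retracts onto its interior arc. Concatenating these four retractions along the gluing disks yields a deformation retraction of $V$ onto the circle $K$. Hence $V$ is a regular neighborhood of $K$ in $M$, and $N(\mathcal{B}) \cong M \setminus \mathrm{int}(V) \cong M \setminus N(K)$, which is the (1,1)-knot complement. The only real subtlety is confirming that $\gamma_i$ remains boundary-parallel (equivalently, unknotted) in $B_i$ after cutting along $D_i$; this is exactly the content of the (1,1)-presentation, and once it is in hand the remainder is straightforward cut-and-paste topology.
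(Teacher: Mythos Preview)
Your proof is correct and follows essentially the same approach as the paper: embed $\mathcal{B}$ in the ambient manifold, observe that the complement of $N(\Sigma\cup D_\alpha\cup D_\beta)$ is two 3-balls, and that reinstating the two removed disks (source bigon and sink-bigon hole) attaches two 1-handles, yielding a solid torus whose core is the (1,1)-knot. The paper states this last identification in one line (``its core curve isotopic to the (1,1)-knot''), whereas you spell it out more carefully via the four trivial $(D^2\times I,\{0\}\times I)$ pairs and their deformation retractions onto the constituent arcs of $K$; this extra care is welcome but does not constitute a different argument.
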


\begin{proof}
    Let $L$ be the total lens space or 3-sphere the (1,1)-knot lives in. From the construction of $\mathcal{B}$ there's a canonical way of embedding our branched surface into $L$, identifying $\Sigma$ and the $\alpha,\beta$-disks with the Heegaard torus and compression disks. We notice from the construction that in each of the two solid tori cut out by the Heegaard torus $\Sigma$, the complement of the branched surface is a 3-ball obtained by cutting the solid torus along the compression disk. Moreover, there are two 1-handles connecting these two 3-balls at the removed source bigon and the hole in the sink bigon respectively. It follows that $L-N(\mathcal{B})$ is a solid torus, with its core curve isotopic to the (1,1)-knot in $L$. Hence we can identify $N(\mathcal{B})$ with the (1,1)-knot complement.
\end{proof}

We also prove the following lemma that'll be used in the next section:

\begin{lem}
    The branched surface $\mathcal{B}$ associated to a non-simple (1,1)-diagram contains no disk of contact.
    \label{lem:6}
\end{lem}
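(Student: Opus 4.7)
I will argue by contradiction, combining Lemma~\ref{lem:5} with the explicit structure of $\mathcal{B}$. Since a non-simple (1,1)-knot is non-trivial, Lemma~\ref{lem:5} implies that $N(\mathcal{B})$ is irreducible with incompressible torus boundary $\partial N(\mathcal{B})\cong T^2$. Suppose $D\subset N(\mathcal{B})$ is a disk of contact. Then $\partial D\subset\partial_v N(\mathcal{B})\subset\partial N(\mathcal{B})$, and $\partial D$ cannot be essential on $T^2$, for otherwise $D$ would be a compressing disk, contradicting incompressibility of $\partial N(\mathcal{B})$. Hence $\partial D$ bounds a disk $D'\subset T^2$, and by irreducibility $D\cup D'$ bounds a ball in $N(\mathcal{B})$.

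Next I project to $\mathcal{B}$. Since $D$ is transverse to the $I$-fibers, $\pi|_D$ is an immersion and $\pi(D)\subset \mathcal{B}$ is an immersed disk whose boundary lies on the branch locus. A local inspection at a branch arc shows that $\partial_v N(\mathcal{B})$ sits in the ``cusp'' between the two sheets on the 2-sheet side of the arc, so $\pi(D)$ lies on the 2-sheet side of every boundary branch arc. Consequently the branch direction at each boundary arc of $\pi(D)$ points \emph{outward} from $\pi(D)$; equivalently, $\pi(D)$ is a weighted union of sectors of $\mathcal{B}$, with outward-pointing branch direction along every boundary arc and satisfying the branch equation $w_{S_1}=w_{S_2}+w_{S_3}$ at every interior arc (where $S_1$ is the sector on the 1-sheet side).

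I then rule out such a $\pi(D)$ using the construction in Definition~\ref{def:5}. The source bigon is, by Remark~\ref{rem:1}, the unique source bigon of the (1,1)-diagram and the only disk sector in $\Sigma$ whose whole boundary has $\Sigma$-region on the outward side; it has been explicitly \emph{removed} from $\mathcal{B}$. Moreover, the compression disks $D_\alpha,D_\beta$ cannot provide outward-boundary sectors, since at each of their boundary arcs the branch direction is tangent to $\Sigma$ rather than to the compression disk itself. Thus any candidate $\pi(D)$ must be a weighted union of regions of $\Sigma\setminus(\alpha\cup\beta)$ trying to close up as an immersed disk with outward boundary. The main obstacle is showing that such a configuration must contain the (missing) source bigon with positive weight. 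I would handle this by dualizing the sink tube analysis of Definition~\ref{def:2} and Propositions~\ref{lem:1}, \ref{lem:2} to the source side: starting from any sector carrying positive weight in $\pi(D)$, the outward-boundary and branch-equation constraints force a propagation through the chain of $\beta$-source sectors that forms the source tube, and the uniqueness of the source $\alpha$-arc on the hexagon/octagon (Proposition~\ref{lem:1}) ensures that this propagation can terminate only at the source bigon—yielding the desired contradiction.
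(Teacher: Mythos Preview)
Your approach diverges from the paper's, and the divergence is where the trouble lies.

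The paper's proof is short and direct: it identifies the two components of $\partial_v N(\mathcal{B})$ (one coming from the sink-bigon hole, one from the single cusp circle formed after deleting the source bigon) and observes that the core of each is the knot meridian on $\partial N(\mathcal{B})$. A disk of contact would then be a disk in the knot exterior bounded by a meridian; capping with a meridian disk of $N(K)$ produces a $2$-sphere meeting $K$ algebraically once, hence non-separating in $L$, which is impossible since $H_2(L)=0$. No appeal to incompressibility, irreducibility, or any combinatorics of sectors is needed.

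Your argument splits into an ``essential'' and an ``inessential'' case for $\partial D$ on $T^2$, and you spend almost all the effort on the inessential case via a projection/source-tube analysis. But that case is vacuous: since $D$ is transverse to the $I$-fibers, $\partial D$ is transverse to the interval foliation of the vertical annulus, so its projection to the core circle is a covering map; an embedded $\partial D$ is therefore isotopic to the core, hence essential in $T^2$. So the elaborate machinery you propose (immersed $\pi(D)$, outward branch directions, propagation through the source tube) never comes into play, and in any event you leave it as a sketch (``I would handle this by\ldots'').

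What remains is your essential case, which is close in spirit to the paper's argument but routed through ``$\partial N(\mathcal{B})$ is incompressible.'' That claim requires justification in lens spaces (a priori the exterior could be a solid torus), and Lemma~\ref{lem:5} by itself does not supply it. The paper sidesteps this entirely: it only needs that the \emph{meridian} cannot bound a disk in the exterior, which follows from the one-line homological observation above and holds even if the boundary torus were compressible along some other slope. In short, you should drop the inessential case and replace the incompressibility appeal with the meridian/non-separating-sphere argument.
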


\begin{proof}
    First we show that there is only 1 cusp circle and 1 boundary circle in $\mathcal{B}$ (that are the cores of the vertical boundary components of $N(\mathcal{B})$). The boundary circle comes from the hole in the sink bigon. Before removing the source bigon, the branch locus of the model consists of two cusp circles $\alpha$ and $\beta$. After removing the source bigon to get $\mathcal{B}$, the boundary of the source bigon is no longer part of the branch locus. In fact, the branch locus becomes a single cusp circle obtained by connecting the remaining $\alpha$- and $\beta$-arcs at the vertices of the source bigon. Moreover, by isotoping along the compression disks we know on $\partial N(\mathcal{B})$ the new cusp circle (as the core curve of the corresponding vertical boundary annulus) is still isotopic to the boundary of the source bigon. Hence both the cusp circle and the boundary circle represent the knot meridian. Since the meridian of a knot cannot bound disks in the knot complement (otherwise we will be getting a non-separating 2-sphere in $L$), there is no disk of contact in $\mathcal{B}$. 
\end{proof}

\begin{rem}
    We can also regard $N(\mathcal{B})$ as properly embedded in the corresponding (1,1)-knot complement $M$, such that $N(\mathcal{B})$ intersects $\partial M\cong T^2$ in the vertical annulus transverse to the boundary circle of $\mathcal{B}$ (in the sink bigon). $M-N(\mathcal{B})$ is then homeomorphic to $\mathring{A}\times (0,1)$, where $A$ is an annulus (see Figure~\ref{fig:28} bottom-right, where $A\times \{0\}$ and $A\times \{1\}$ are the two annular components of $\partial_h N(\mathcal{B})$, and $\partial A\times [0,1]$ consist of the vertical annulus at the cusp circle, and an annulus on $\partial M$). In the following sections we will first construct co-oriented foliations for $N(\mathcal{B})$, and then trivially extend the foliation to $M$ via this proper embedding.
\end{rem}

\section{Constructing laminations from (1,1)-diagrams}
\label{sec:3}

In this section we use an inductive argument to prove the following statement:

\begin{prop}
    Every branched surface associated to a non-simple reduced (1,1)-diagram fully carries a lamination.
    \label{prop:1}
\end{prop}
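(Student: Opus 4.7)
The plan is to prove Proposition~\ref{prop:1} by induction on a complexity invariant of the non-simple reduced (1,1)-diagram, most naturally the length of the $\beta$-sink tube from Definition~\ref{def:2} (equivalently, by Proposition~\ref{lem:2}, the total number of $\beta$-sink sectors). By the hyperelliptic symmetry of Remark~\ref{rem:1}(2) the $\beta$-source tube has the same length, so this single parameter controls the sink/source structure in $\Sigma-(\alpha\cup\beta)$.

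For the base case I take the sink tube to have minimal length, so that both bigons are directly adjacent to the hexagons or octagon isolated by Proposition~\ref{lem:1}. The branched surface $\mathcal{B}$ then has very few sectors, and I would either (a) construct a carried lamination explicitly using the hyperelliptic symmetry of the diagram, or (b) enumerate the sectors of $\mathcal{B}$ to verify the hypotheses of Lemma~\ref{lem:3}: sink-disk-freeness (noting that the punctured sink bigon is no longer a disk, and that the branching convention of Definition~\ref{def:5} makes the compression disks source-type along their boundaries), absence of carried tori, and the no-disk/no-sphere condition on $\partial_h N(\mathcal{B})$. Lemma~\ref{lem:6} will assist in controlling $\partial_h N(\mathcal{B})$.

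For the inductive step, given a diagram whose sink tube exceeds the minimal length, I would locate a $\beta$-sink sector adjacent to the sink bigon (together with its hyperelliptic image adjacent to the source bigon) and construct a simple closed curve $c\subset \mathcal{B}$ that cuts off this pair of sectors. Cutting $\mathcal{B}$ along $c$ should yield a branched surface $\mathcal{B}'$ homeomorphic to the one associated to a non-simple reduced (1,1)-diagram with strictly shorter sink tube. By the induction hypothesis $\mathcal{B}'$ fully carries a lamination; applying Lemma~\ref{lem:4} (Gabai's gluing), the two boundary components of $\mathcal{B}'$ produced by the cut can be glued back together, yielding a lamination fully carried by $\mathcal{B}$.

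The main obstacle will be twofold. First, I must verify that cutting along $c$ really produces the branched surface associated to a non-simple reduced (1,1)-diagram: non-simplicity should follow from keeping the hexagons or octagon intact (so that both a sink and a source boundary $\alpha$-arc survive, per Proposition~\ref{lem:1}), and reducedness requires the resulting $\alpha',\beta'$ to remain in minimal position after the simplification. Second, I must ensure that the laminations appearing throughout the induction contain no disk leaves, a hypothesis of Lemma~\ref{lem:4}. For this I expect to use Lemma~\ref{lem:6} together with the fact that both the cusp circle and the boundary circle of $\mathcal{B}$ represent the knot meridian, which is nontrivial in the knot complement, so any disk leaf would contradict the structure of a lamination fully carried by $\mathcal{B}$ near $\partial\mathcal{B}$.
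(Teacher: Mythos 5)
There is a genuine gap in the inductive step, and it is the heart of the argument. You claim that one can find a simple closed curve $c\subset\mathcal{B}$ cutting off a $\beta$-sink sector adjacent to the sink bigon (plus its hyperelliptic image) so that cutting along $c$ yields the branched surface associated to a non-simple reduced (1,1)-diagram with strictly shorter sink tube, and then reglue via Lemma~\ref{lem:4}. This cannot work as stated. First, a reduced diagram has $\alpha$ and $\beta$ in minimal position, so a ``simpler'' reduced diagram necessarily involves replacing one curve by a non-isotopic one in the twice-punctured torus; it therefore represents a \emph{different} knot, and its associated branched surface has regular neighborhood a \emph{different} knot complement (Lemma~\ref{lem:5}). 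Cutting $\mathcal{B}$ along a simple closed curve only removes a vertical rectangle from $N(\mathcal{B})$ and cannot change the ambient knot complement into another one. Second, the curve you describe bounds a disk on $\Sigma$ (the union of the sink bigon and an adjacent quadrilateral) and must cross both $\alpha$ and $\beta$; cutting the branched surface along it slices through the attached compression disks, and the result is not of the form ``torus plus two compression disks minus bigons'' at all. Third, even granting some identification, the induction hypothesis hands you a lamination fully carried by the simpler diagram's branched surface, which has exactly one boundary circle (in its sink bigon), whereas the cut-open $\mathcal{B}$ has two additional boundary circles created by the cut; Lemma~\ref{lem:4} requires a lamination on the cut-open object itself, which the induction hypothesis does not supply.

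The paper's actual route is structurally different and this difference is forced by the obstruction above. The reduction replaces $\beta$ by a non-isotopic carrying curve $\delta$ (Definition~\ref{def:7}, Proposition~\ref{lem:8}), decreasing the intersection number $p$, with primitive diagrams (not minimal sink tubes) as the base case; and because the reduced diagram lives in a different knot complement, the transfer of laminations is not a cut-and-reglue but the elaborate construction of subsection~\ref{subsec:3.3}: attach a $\delta$-annulus to form $\mathcal{B}'$, realize $\mathcal{C}'$ as a sub-branched surface, attach the difference $\mathcal{A}$ to $\partial_h N(\mathcal{C}')$, and assemble the lamination from pieces carried by $\mathcal{C}'$ and by $\mathcal{A}''$, the latter requiring the sink tube pushes and rational tangle analysis of subsection~\ref{subsec:3.5}. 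Your peripheral observations are sound --- the compression disks are indeed source-type along their boundaries, and the meridian argument of Lemma~\ref{lem:6} is the right tool for excluding disk leaves --- but they do not repair the inductive step.
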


The outline to prove Proposition~\ref{prop:1} is as follows. In subsection~\ref{subsec:3.1} we define primitive (1,1)-diagrams, and claim that their associated branched surfaces fully carry laminations. In subsection~\ref{subsec:3.2} we describe a reduction operation of (1,1)-diagrams, and prove that there's a finite hierarchy to reduce any non-simple, reduced (1,1)-diagram to a primitive one. In subsection~\ref{subsec:3.3} we illustrate that laminations are ``carried over'' in the hierarchy. (That is, if the associated branched surface of the (1,1)-diagram after the reduction operation in subsection~\ref{subsec:3.2} fully carries a lamination, then the associated branched surface of the (1,1)-diagram before reduction also fully carries a lamination.) A proof for Proposition~\ref{prop:1} is thus given at the end of subsection~\ref{subsec:3.3}, minus technical details. In subsections~\ref{subsec:3.4} and~\ref{subsec:3.5} we deal with the technical details and complete all the proofs.

\subsection{Primitive (1,1)-diagrams}
\label{subsec:3.1}

First we define primitive (1,1)-diagrams, and claim that their associated branched surfaces fully carry laminations. (We note that ``primitive'' here is just describing the diagram and doesn't imply known properties of the knot.) Recall the $\beta$-sink/source/parallel $\alpha$-arcs we defined in Definition~\ref{def:1}, and also the $\alpha$-sink/source/parallel $\beta$-arcs in Remark~\ref{rem:1} following it.

\begin{defn}
    Let $(\Sigma,\alpha,\beta,z,w)$ be a reduced (1,1)-diagram. The diagram is called \textit{primitive} if  
    \begin{enumerate}[(i)]
        \item any quadrilateral has ($\beta$-)parallel $\alpha$-arcs if and only if it has ($\alpha$-)parallel $\beta$-arcs, and
        \item if there are two hexagons, then for each hexagon the two non-parallel boundary arcs (one $\alpha$-arc and one $\beta$-arc) are next to each other.
    \end{enumerate}
    \label{def:6}
\end{defn}

When one of the curves is placed in standard position (recall our definition at the start of subsection~\ref{subsec:2.1}, or see the $\alpha$-curve in Figure~\ref{fig:1}), there is a more direct description of being primitive. In fact, we have:

\begin{prop}
    Let $(\Sigma,\alpha,\beta,z,w)$ be a reduced (1,1)-diagram where $\alpha$ is placed in standard position. Then the diagram is primitive if and only if
    \begin{enumerate}[(i)]
        \item its rainbow arcs are in alternating direction, and
        \item its vertical arcs are in the same direction, see Figure~\ref{fig:8}.
    \end{enumerate}
    \label{lem:7}
\end{prop}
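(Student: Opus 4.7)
The plan is to translate Definition~\ref{def:6} into direct geometric constraints by computing sink/source/parallel types of $\alpha$- and $\beta$-arcs from the standard-position picture, region by region. First I would orient $\alpha$ eastward on both the top and bottom edges of the square. At every top-edge crossing the $\beta$-arc extends downward into the square interior, so it lies on the right of $\alpha$; symmetrically, at every bottom-edge crossing it lies on the left of $\alpha$. This immediately gives that every top rainbow arc is $\alpha$-source, every bottom rainbow arc is $\alpha$-sink, and every vertical arc is $\alpha$-parallel. For $\alpha$-arcs, whether the arc lies on the left or right of $\beta$ at a given endpoint depends on the $\alpha$-arc's horizontal direction (east or west) out of the endpoint and on $\beta$'s vertical direction (north or south) at the crossing, the latter being determined by the direction of the adjacent rainbow arc (leftward or rightward) or the orientation of the adjacent vertical arc (downward or upward).

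Next I would apply this local classification to each quadrilateral. For a quadrilateral between two adjacent nested top rainbow arcs, both $\beta$-arcs are $\alpha$-source (hence not $\alpha$-parallel), and a direct case check shows the $\alpha$-arcs are $\beta$-parallel iff the two rainbow arcs share direction; condition (i) of Definition~\ref{def:6} therefore forces them to have opposite directions, and running over all adjacent nested pairs yields the alternating condition on top rainbow arcs. The same argument works on the bottom. For a quadrilateral between two adjacent vertical arcs, the $\beta$-arcs are $\alpha$-parallel, and the $\alpha$-arcs are $\beta$-parallel iff the two vertical arcs share orientation; condition (i) of Definition~\ref{def:6} thus forces adjacent vertical arcs to have the same orientation, so within each maximal group of consecutive vertical arcs the orientation is uniform.

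It remains to treat the hexagon(s) or octagon. In the one-octagon case (a single geometric kind of vertical arcs), all vertical arcs form one group whose common orientation is already fixed by the previous step, condition (ii) of Definition~\ref{def:6} is vacuous, and both visual conditions follow. In the two-hexagon case, there are two groups of vertical arcs, each internally orientation-consistent; condition (ii) supplies the rest. Using Proposition~\ref{lem:1} to identify the unique non-parallel $\alpha$-arc in each hexagon as the terminus of the $\beta$-sink or $\beta$-source tube of Definition~\ref{def:2}, and identifying the unique non-parallel $\beta$-arc as the interface vertical arc between the two kinds, the adjacency required by condition (ii) translates into both groups of vertical arcs sharing a single $\beta$-orientation, yielding the visual condition on vertical arcs. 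The converse direction is a direct verification using the same local computations: assuming both visual conditions, every quadrilateral satisfies (i) by the classification above, and the hexagon/octagon analysis shows (ii).

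The main obstacle is the hexagon analysis in the last step: one must carefully enumerate, in the standard-position picture, which $\alpha$- and $\beta$-arcs bound each hexagon and in what cyclic order, a configuration depending on the parameters $(p,q,r,s)$ of Figure~\ref{fig:1}. I would reduce the number of cases using the hyperelliptic symmetry of Remark~\ref{rem:1}, which interchanges the $\beta$-sink and $\beta$-source hexagons while reversing orientations, together with Proposition~\ref{lem:1} to pin down the location of the unique sink and source arcs.
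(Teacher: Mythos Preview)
Your overall strategy is the same as the paper's: classify quadrilaterals by the $\alpha$-parallel / non-$\alpha$-parallel type of their $\beta$-sides and read off the alternating condition on rainbow arcs and the uniform condition within each block of vertical arcs, then handle the hexagons with condition~(ii). The quadrilateral part is fine and matches the paper.

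There is, however, a concrete error in your hexagon step. You write that the unique non-$\alpha$-parallel $\beta$-arc of a hexagon is ``the interface vertical arc between the two kinds''. But you yourself established at the outset that every vertical arc is $\alpha$-parallel; so the non-$\alpha$-parallel $\beta$-arc of the hexagon cannot be a vertical arc. Each hexagon has three $\beta$-sides, namely the outermost rainbow arc together with one vertical arc of each kind, and the unique non-$\alpha$-parallel one is the rainbow arc. With this corrected identification the argument becomes short and does not require the sink-tube machinery you invoke: condition~(ii) of Definition~\ref{def:6} says the unique non-$\beta$-parallel $\alpha$-arc is adjacent to the rainbow $\beta$-arc, hence the $\alpha$-arc lying between the two vertical $\beta$-arcs (the one opposite the rainbow arc in the hexagon) must be $\beta$-parallel, which is exactly the statement that the two kinds of vertical arcs share orientation. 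This is what the paper does. Your appeal to Proposition~\ref{lem:1} and the tube of Definition~\ref{def:2} is correct but unnecessary here; once the rainbow arc is recognised as the non-parallel $\beta$-side, condition~(ii) finishes the job directly.
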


\begin{figure}[!hbt]
    \begin{overpic}[scale=0.8]{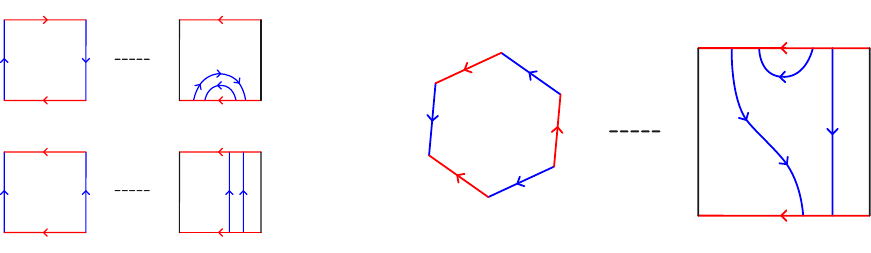}
        \put(97,2){$\color{red} \alpha$}
        \put(96,16){$\color{ao} \beta$}
    \end{overpic}
    \caption{Quadrilaterals and Hexagons in primitive (1,1)-diagrams}
    \label{fig:8}
\end{figure}

\begin{proof}
    Suppose the diagram is primitive and $\alpha$ is in standard position. Quadrilaterals bounded by $\alpha$ and the rainbow arcs then have non-($\alpha$-)parallel $\beta$-arcs, hence must have non-($\beta$-)parallel $\alpha$-arcs, as depicted in Figure~\ref{fig:8} upper-left. It follows that the rainbow arcs must be alternating. On the other hand, quadrilaterals bounded by $\alpha$ and vertical arcs have parallel $\beta$-arcs, hence also have parallel $\alpha$-arcs, indicating that each kind of vertical arcs must be in the same direction (see Figure~\ref{fig:8} bottom-left). It remains to show that if there are two kinds of vertical arcs, they also go in the same direction. Notice since there are two kinds of vertical arcs, there must be two hexagons. For each hexagon, the non-($\alpha$-)parallel $\beta$-arc must be the rainbow arc, see Figure~\ref{fig:8} right. Now since the non-($\beta$-)parallel $\alpha$-arc is next to the rainbow $\beta$-arc, the opposite $\alpha$-arc in between the vertical $\beta$-arcs must be parallel, indicating that the two kinds of vertical arcs are in the same direction. 
    
    One can check that these translations also work for the inverse direction of the statement. The proposition then follows.  
\end{proof}

We remark that by our Definition~\ref{def:6} the diagram being primitive is independent of the position of the diagram, i.e. if we place $\beta$ in standard position instead, we will still get alternating rainbow $\alpha$-arcs and vertical $\alpha$-arcs in the same direction.

\begin{prop}
    Branched surfaces associated to primitive (1,1)-diagrams fully carry laminations.
    \label{prop:2}
\end{prop}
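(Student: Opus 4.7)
The plan is to verify the three hypotheses of Lemma~\ref{lem:3} for the branched surface $\mathcal{B}$ associated to a primitive $(1,1)$-diagram: that $\mathcal{B}$ is sink disk free, carries no torus, and no component of $\partial_h N(\mathcal{B})$ is a disk or sphere. The third condition is immediate from the remark at the end of Subsection~\ref{subsec:2.3}, which identifies $\partial_h N(\mathcal{B})$ as the disjoint union of two annuli.

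For the sink disk condition I catalogue the sectors of $\mathcal{B}$. After removing the source bigon, its two boundary arcs are no longer branch locus (by the remark following Definition~\ref{def:5}), so the $\alpha$-disk merges with the adjacent region of $\Sigma$ across the source bigon's $\alpha$-arc, and the $\beta$-disk merges analogously across the source bigon's $\beta$-arc. Since the branch direction on $\alpha$ points to the left of $\alpha$ and the $\alpha$-disk lies to the right of $\alpha$ after smoothing, the $\alpha$-disk contributes outward branch direction along every one of its remaining boundary $\alpha$-arcs, and likewise for the $\beta$-disk; hence the two merged sectors both carry source edges and cannot be sink disks. The punctured sink bigon is an annulus, hence not a sink disk. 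For the remaining sectors of $\Sigma$ I apply Proposition~\ref{lem:7}. A vertical quadrilateral has one $\alpha$-arc on the top edge and one on the bottom, so its CCW boundary traverses one in $\alpha$'s orientation (a sink edge) and the other against it (a source edge); its two parallel vertical $\beta$-arcs split in the same way. A rainbow quadrilateral is sandwiched between two oppositely oriented rainbow $\beta$-arcs by the alternation part of Proposition~\ref{lem:7}, and a direct check along its CCW boundary yields two sink edges and two source edges. For a hexagon or the octagon, Proposition~\ref{lem:1} provides a unique non-parallel $\alpha$-arc and a unique non-parallel $\beta$-arc; the adjacency condition of Definition~\ref{def:6}(ii) places them next to each other, and a short case analysis on the two parallel $\alpha$-arcs and two parallel $\beta$-arcs produces at least one source edge on the boundary.

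For the no-torus condition I use the analysis in the proof of Lemma~\ref{lem:6}: $\mathcal{B}$ has a single cusp circle and a single boundary circle, both representing the knot meridian. A torus carried by $\mathcal{B}$ would assemble from positively weighted sectors whose weights balance across the cusp circle, but since the meridian is nontrivial in the knot complement $N(\mathcal{B})$, no such closed assembly is possible, in particular no torus. With all three hypotheses satisfied, Lemma~\ref{lem:3} produces a lamination fully carried by $\mathcal{B}$. The hardest step of this plan is the hexagon/octagon subcase of the sink disk analysis, where the adjacency condition of Definition~\ref{def:6}(ii) has to be combined with careful orientation bookkeeping on the two parallel arc pairs to force at least one source edge on the boundary.
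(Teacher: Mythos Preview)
Your central claim---that $\mathcal{B}$ itself is sink disk free---is false, and the error is in your analysis of the rainbow quadrilaterals. You assert that the alternation of rainbow $\beta$-arcs forces two sink edges and two source edges on the CCW boundary. But consider two nested upper rainbow arcs $r_1$ (inner) and $r_2$ (outer) with opposite orientations, say $r_1$ running from its left foot to its right foot and $r_2$ from its right foot to its left foot. Traversing the quadrilateral between them counterclockwise, you go along the left $\alpha$-segment forward, along $r_1$ forward, along the right $\alpha$-segment forward, and along $r_2$ forward: every edge is a sink edge. In the language of Definition~\ref{def:1}, primitivity forces each rainbow quadrilateral to be simultaneously $\beta$-non-parallel and $\alpha$-non-parallel, and those that are both $\alpha$-sink and $\beta$-sink are honest sink disks of $\mathcal{B}$. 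The paper exhibits exactly such a sink disk in the $5_2$ example (Figure~\ref{fig:19}).

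The paper's proof therefore does \emph{not} apply Lemma~\ref{lem:3} directly to $\mathcal{B}$. Instead it first performs a splitting (the ``sink tube push'' of Subsection~\ref{subsec:3.4}) along the $\beta$-sink tube, which in the primitive case is a spiral of rainbow sectors; this pushes one long $\beta$-arc of the tube onto the other and destroys all the sink sectors. One then checks that the resulting branched surface $\tilde{\mathcal{B}}$ has no sink disk by examining the sink corners at every double point (Lemma~\ref{lem:15}), using that after the push all remaining rainbow arcs on $\Sigma$ point the same way and the vertical arcs already do by primitivity. Lemma~\ref{lem:3} is then applied to $\tilde{\mathcal{B}}$, and since $\tilde{\mathcal{B}}$ is a splitting of $\mathcal{B}$, the lamination pushes down to $\mathcal{B}$. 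Your argument is missing this splitting step entirely. (Your no-torus argument is also too vague; the paper's version observes that a carried closed surface would be co-oriented and hence non-separating in $L$, impossible since $H_1(L;\mathbb{Q})=0$.)
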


We defer the proof of this proposition to subsection~\ref{subsec:3.4}, where we introduce the ``sink tube push'' to eliminate the sink disks in the branched surface.

\subsection{Reducing (1,1)-diagrams to primitive}
\label{subsec:3.2}

In this subsection we introduce a reduction operation on reduced (1,1)-diagrams that is crucial to our proof. We first make the following observation of (1,1)-diagrams:

\begin{defn}
    Let $(\Sigma,\alpha,\beta,z,w)$ be a reduced (1,1)-diagram. An essential curve $\gamma$ is said to carry $\alpha$ if 
    \begin{enumerate}[(i)]
        \item it is disjoint from and isotopic to $\alpha$ in $\Sigma$ (without the marked points $z,w$),
        \item it only intersects the $\alpha$-parallel $\beta$-arcs.
        \item $(\Sigma,\gamma,\beta,z,w)$ is also a reduced (1,1)-diagram.
    \end{enumerate}
    \label{def:7}
\end{defn}

\begin{figure}[!htb]
    \begin{overpic}[scale=0.47]{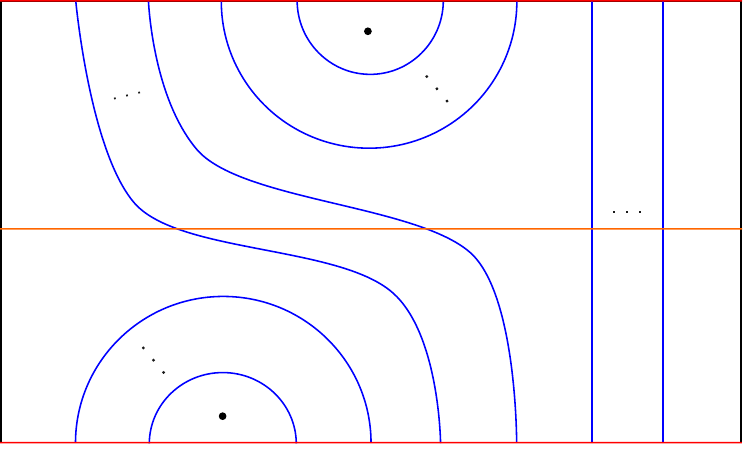}
        \put(95,34){$\color{or} \gamma$}
        \put(95,4){$\color{red} \alpha$}
        \put(91,50){$\color{ao} \beta$}
    \end{overpic}
    \caption{Carrying curve}
    \label{fig:9}
\end{figure}

\begin{rem}
    When $\alpha$ is placed in standard position, condition (ii) is equivalent to saying that $\gamma$ only intersects vertical arcs, see Figure~\ref{fig:9}.
    \label{rem:2}
\end{rem}

\begin{prop}
    Let $(\Sigma,\alpha,\beta,z,w)$ be a reduced (1,1)-diagram. Then there exists $\gamma$ carrying $\alpha$ and it is unique up to isotopy in the twice-punctured torus $(\Sigma,z,w)$.
    \label{lem:8}
\end{prop}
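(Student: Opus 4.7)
The plan is to realize $\gamma$ as the core of a specific annular subsurface of $\Sigma\setminus\{z,w\}$ naturally associated to $\alpha$ and to the two ``rainbow peninsulas'' containing the basepoints. First I would put $\alpha$ in standard position and use Remark~\ref{rem:2} to identify the $\alpha$-parallel $\beta$-arcs with the vertical arcs. Define the two closed peninsulas $P_w$ and $P_z$ to be the disks bounded by the outermost top (resp.\ bottom) rainbow arc together with the intervening segment of $\alpha$; these contain the basepoints $w,z$ and all rainbow $\beta$-arcs. The preliminary claim, proved by a direct Euler-characteristic count, is that $A_0 := \Sigma\setminus(\alpha\cup\operatorname{int}(P_w\cup P_z))$ is an annulus---obtained from $\Sigma\setminus\alpha$ by taking a ``bite'' (a peninsula) out of each of its two boundary components.

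For existence, I would take $\gamma$ to be a core circle of $A_0$. Condition (i) holds since the core of $A_0$ is parallel to $\alpha$ in $\Sigma$ and disjoint from $\alpha$ by construction. Condition (ii) holds because $\gamma\subset A_0$ lies outside both peninsulas and hence misses every rainbow arc; the only $\beta$-content of $A_0$ is the collection of vertical arcs running transversely across $A_0$ from one boundary to the other, each crossing $\gamma$ exactly once. For (iii), I would verify that $(\Sigma,\gamma,\beta,z,w)$ is a reduced $(1,1)$-diagram by analyzing the regions of the $(\gamma,\beta)$-decomposition and checking that every bigon contains a basepoint, so that no bigon can be removed by an isotopy in $\Sigma\setminus\{z,w\}$.

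For uniqueness, let $\gamma'$ be any other curve carrying $\alpha$. Being disjoint from $\alpha$ by (i) and from every rainbow arc by (ii), $\gamma'$ is disjoint from $\partial P_w\cup\partial P_z$; since each peninsula is a disk and $\gamma'$ is essential in $\Sigma$, $\gamma'$ must lie outside both peninsulas, so $\gamma'\subset A_0$. Any essential simple closed curve in an annulus is isotopic to the core, hence $\gamma'$ is isotopic to $\gamma$ within $A_0$; as $A_0\subset\Sigma\setminus\{z,w\}$, this isotopy takes place in the twice-punctured torus.

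The main obstacle I anticipate is the verification of (iii) in the existence step. One must track how the region structure of $(\alpha,\beta)$ transforms when $\alpha$ is replaced by the distant curve $\gamma$: the two innermost-rainbow bigons containing $w,z$ should persist (in modified form) as the only bigons of $(\gamma,\beta)$, while the higher-rainbow quadrilaterals and the central hexagons/octagon of $(\alpha,\beta)$ should merge across the now-absent $\alpha$-arcs into higher-gons rather than new bigons. This requires a careful case analysis of the way the outermost rainbow arcs abut the vertical arcs at the hexagons or octagon of the diagram. A secondary technical point is the annularity of $A_0$, which uses the non-separating nature of $\alpha$ together with the observation that each peninsula meets $\Sigma\setminus\alpha$ only as a half-disk attached along a single arc of one boundary component.
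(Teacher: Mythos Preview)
Your approach is correct and is essentially a topological repackaging of the paper's argument. Both construct the same curve $\gamma$ (in standard position, the horizontal curve through the middle of the square, as in Figure~\ref{fig:9}), but where you identify it as the core of an explicit annulus $A_0$, the paper identifies it combinatorially via the ``cycle'' of regions of $(\Sigma,\alpha,\beta)$ possessing two $\alpha$-parallel $\beta$-arcs---that cycle is exactly your $A_0$ seen region by region. Your uniqueness argument is in fact cleaner and slightly stronger: you show that conditions (i) and (ii) alone force $\gamma'\subset A_0$, whence $\gamma'$ is isotopic to the core; the paper's version instead uses condition (iii) to rule out $\gamma'$ entering and exiting a region through the same $\beta$-arc.

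The ``main obstacle'' you anticipate for (iii), however, dissolves without any case analysis. The paper's argument is essentially one line: any $(\gamma,\beta)$-bigon $D$ must meet $\alpha$ (each vertical $\beta$-arc crosses $\gamma$ exactly once, so the $\beta$-side of $\partial D$ cannot return to $\gamma$ without first passing through $\alpha$); the $\alpha$-arcs in $D$ are then properly embedded with all endpoints on $\partial D\cap\beta$, so an innermost one cuts off an $(\alpha,\beta)$-bigon inside $D$, which contains a basepoint since $(\Sigma,\alpha,\beta,z,w)$ is reduced. Hence every $(\gamma,\beta)$-bigon contains a basepoint and the new diagram is reduced. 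This replaces your proposed region-merging bookkeeping entirely.
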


\begin{proof}
    We can place $\alpha$ in standard position. Then we claim the curve $\gamma$ as depicted in Figure~\ref{fig:9} is carrying $\alpha$. Conditions (i) and (ii) are easily checked. For (iii), notice that bigons bounded by $\gamma$ and $\beta$ must intersect $\alpha$, hence contain some bigon bounded by $\alpha$ and $\beta$, hence must also contain some basepoint. It follows that the new (1,1)-diagram $(\Sigma,\gamma,\beta,z,w)$ is also reduced.

    For uniqueness, we first claim that each region cut off by $\alpha$ and $\beta$ has 0 or 2 $\alpha$-parallel $\beta$-arcs. In fact, one can directly check that bigons have 0, $\alpha$-parallel quadrilaterals have 2, other quadrilaterals have 0, and the hexagons or octagon have 2 (recall Proposition~\ref{lem:1}). Now since $\gamma$ is essential, it cannot be contained in a single region. Since it is disjoint from $\alpha$, it can only exit and enter regions at the $\beta$-arcs. Since it is closed, when it enters a region it must also exit it. Since the new (1,1)-diagram is reduced, it cannot exit at the same arc as entry (otherwise there will be a bigon with no basepoint). It follows that $\gamma$ is characterized by a cycle of regions with ($\alpha$-)parallel $\beta$-arcs that it passes through, see Figure~\ref{fig:10}. Now place $\alpha$ in standard position. As can be seen in Figure~\ref{fig:9}, there is a unique such cycle including all the regions with parallel $\beta$-arcs. This determines the unique (up to isotopy) curve $\gamma$ carrying $\alpha$.
    \begin{figure}[!hbt]
        \begin{overpic}[scale=0.7]{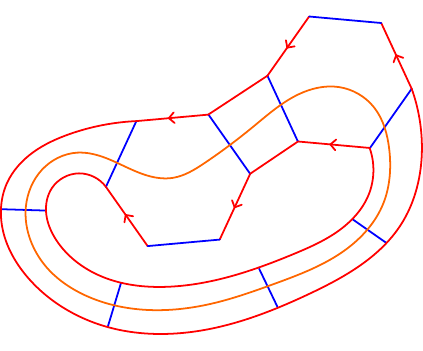}
            \put(40,35){$\color{or} \gamma$}
            \put(40,58){$\color{red} \alpha$}
            \put(80,80){$\color{ao} \beta$}
        \end{overpic}
        \caption{A cycle formed by regions with parallel $\beta$-arcs}
        \label{fig:10}
    \end{figure}
\end{proof}

\begin{rem}
    Since the unique cycle formed by all regions with ($\alpha$-)parallel arcs characterizes the carrying curve (of $\alpha$) up to isotopy, we can use this to find the carrying curve without having the original curve placed in standard position, see the $\delta$-curve carrying $\beta$ in Figure~\ref{fig:12}.
    \label{rem:3}
\end{rem}

Our plan is to replace the original curve by the carrying curve to get a ``simpler'' diagram. However, only non-simple (1,1)-diagrams have associated branched surfaces. Hence we need to guarantee that the new diagram is still non-simple. This happens exactly when the original (1,1)-diagram is not primitive.

\begin{prop}
    If a non-simple reduced (1,1)-diagram $(\Sigma,\alpha,\beta,z,w)$ is not primitive, with the curves $\gamma,\delta$ carrying $\alpha,\beta$ respectively, then either $(\Sigma,\gamma,\beta,z,w)$ or $(\Sigma,\alpha,\delta,z,w)$ is non-simple.
    \label{lem:9}
\end{prop}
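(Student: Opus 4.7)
The plan is to prove the contrapositive: if both $(\Sigma,\gamma,\beta,z,w)$ and $(\Sigma,\alpha,\delta,z,w)$ are simple (i.e., contain no bigons), then $(\Sigma,\alpha,\beta,z,w)$ is primitive. Since primitivity is characterized by Proposition~\ref{lem:7} in terms of the standard-position picture, it suffices, after placing $\alpha$ in standard position, to verify (i) the rainbow $\beta$-arcs alternate in direction and (ii) the vertical $\beta$-arcs all share a common direction.

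First I would place $\alpha$ in standard position and realize the carrying curve $\gamma$ concretely as a small pushoff of $\alpha$ threaded through each vertical $\beta$-arc (i.e., each $\alpha$-parallel $\beta$-arc) exactly once, as in Figure~\ref{fig:9}. A bigon of $(\Sigma,\gamma,\beta)$ is cut out by a subarc of $\beta$ between two consecutive $\gamma\cap\beta$ points that lies entirely on one side of $\gamma$. Because $\gamma$ meets every vertical $\beta$-arc, such a subarc can contain no vertical $\beta$-arc, so it must be a concatenation of rainbow $\beta$-arcs on a single side of $\alpha$ joining two vertical $\beta$-arcs on that same side. The hypothesis that $(\Sigma,\gamma,\beta)$ has no bigons therefore rules out any such configuration, which translates into the statement that consecutive vertical $\beta$-arcs along $\beta$ share the same direction in the standard-position picture; this is exactly condition (ii) of Proposition~\ref{lem:7}.

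Next I would run the symmetric argument for the other new diagram: placing $\beta$ in standard position and viewing $\delta$ as a pushoff of $\beta$ through the $\beta$-parallel $\alpha$-arcs, the simplicity of $(\Sigma,\alpha,\delta,z,w)$ forces all the vertical $\alpha$-arcs of that picture to share a single direction. Converting this statement back to the original picture with $\alpha$ in standard position gives that the rainbow $\beta$-arcs alternate in direction, which is condition (i). Combining (i) and (ii) then yields primitivity of $(\Sigma,\alpha,\beta,z,w)$.

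The hardest part will be the careful bookkeeping that turns ``no bigon in $(\Sigma,\gamma,\beta)$'' into a cycle-by-cycle constraint on the rainbow-and-vertical pattern of the original diagram, especially near the hexagons or octagon. The non-parallel boundary arcs of a hexagon sit on the carrying cycle of Figure~\ref{fig:10}, and one must check that if they were non-adjacent in a hexagon then the corresponding subarc of $\beta$ between consecutive $\gamma\cap\beta$ intersections would pick up an extra loop on the wrong side of $\alpha$, producing a forbidden bigon in $(\Sigma,\gamma,\beta)$. I would organize this case analysis using the sink/source/parallel terminology of Definition~\ref{def:1} together with Proposition~\ref{lem:1} to cover the hexagon and octagon cases uniformly.
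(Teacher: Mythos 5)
Your proposal is correct and is essentially the contrapositive of the paper's own argument: the paper splits ``not primitive'' into the two failures of the conditions in Proposition~\ref{lem:7} and, using that the carrying curve meets exactly the parallel arcs, exhibits a rainbow arc (hence a bigon) in $(\Sigma,\alpha,\delta,z,w)$ or $(\Sigma,\gamma,\beta,z,w)$ accordingly, which is the same mechanism you use to extract conditions (i) and (ii) from the simplicity of the two new diagrams. The only inaccuracy is cosmetic: the rainbow $\beta$-arcs traversed between two consecutive $\gamma\cap\beta$ points alternate between the two sides of $\alpha$ rather than lying on a single side, but your conclusion depends only on the sides from which the subarc's two endpoints approach $\gamma$, so the argument goes through.
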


\begin{proof}
    Suppose $\alpha$ is placed in standard position. Since the diagram is not primitive, there are either rainbow arcs bounding $\beta$-parallel sectors, or vertical arcs in different directions. In the first case, by Remark~\ref{rem:3} the $\delta$ curve passes through the $\beta$-parallel sector bounded by rainbow arcs (see Figure~\ref{fig:12} for an example). It follows that the $\delta$-arc in this sector is a rainbow arc in $(\Sigma,\alpha,\delta,z,w)$, indicating non-simpleness. In the second case, we first notice that as in Figure~\ref{fig:9} the $\gamma$ curve intersects all vertical arcs. Since the vertical arcs are not in the same direction, there exists some non-$\beta$-parallel $\gamma$-arc. Now if we place $\beta$ in standard position instead, this $\gamma$-arc must be a rainbow arc (vertical arcs are parallel). It then follows that $(\Sigma,\gamma,\beta,z,w)$ is non-simple in this case.
\end{proof}

We now show that, starting with any non-simple reduced (1,1)-diagram, we will get a primitive diagram after finitely many such replacements.

\begin{prop}
    Let $G=(\Sigma,\alpha,\beta,z,w)$ be a non-simple reduced (1,1)-diagram. Then there exists $n\in \mathbb{N}$ and a finite hierarchy \[G=G_0,G_1,...,G_n,\] where $G_i=(\Sigma,\alpha_i,\beta_i,z,w)$ are non-simple reduced (1,1)-diagrams, such that for each $i<n$, $(\alpha_{i+1},\beta_{i+1})$ is obtained by replacing one of $(\alpha_i,\beta_i)$ by its carrying curve in $G_i$, and $G_n$ is primitive.
    \label{prop:hierarchy}
\end{prop}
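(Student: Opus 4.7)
The plan is to induct on the complexity $c(G_i) := |\alpha_i \cap \beta_i|$ (equivalently the parameter $p_i$ in the parametrisation of $G_i$ from subsection~\ref{subsec:2.1}), and show that every allowed replacement forces $c$ to strictly decrease, by at least $2$. Since $c$ is always a positive integer, this guarantees termination in finitely many steps, after which Proposition~\ref{lem:9} will force the terminal diagram to be primitive.

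For the inductive step, I suppose $G_i = (\Sigma, \alpha_i, \beta_i, z, w)$ is non-simple and reduced. If $G_i$ is already primitive, set $n = i$ and stop. Otherwise Proposition~\ref{lem:9} supplies a valid choice---replacing $\alpha_i$ by its carrying curve $\gamma_i$, or $\beta_i$ by $\delta_i$---that yields a non-simple diagram $G_{i+1}$; condition (iii) of Definition~\ref{def:7} ensures $G_{i+1}$ is automatically reduced. This gives the next entry of the hierarchy whenever $G_i$ is non-primitive, so the procedure can always continue until primitivity is reached.

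The central estimate is then $c(G_{i+1}) \le c(G_i) - 2$. Without loss of generality assume we replaced $\alpha_i$ by $\gamma_i$. By Definition~\ref{def:7}(ii) the curve $\gamma_i$ crosses only $\alpha_i$-parallel $\beta_i$-arcs, and the uniqueness argument in the proof of Proposition~\ref{lem:8} identifies $\gamma_i$ (up to isotopy) as the unique essential curve realising the cycle of regions with parallel boundary $\beta_i$-arcs, crossing each such arc exactly once. Therefore $c(G_{i+1}) = |\gamma_i \cap \beta_i|$ equals the total number of $\alpha_i$-parallel $\beta_i$-arcs. Applying Remark~\ref{rem:1} with the roles of $\alpha_i$ and $\beta_i$ swapped produces one $\alpha_i$-sink bigon and one $\alpha_i$-source bigon; their boundary $\beta_i$-arcs are respectively $\alpha_i$-sink and $\alpha_i$-source, hence non-parallel, and they are distinct because a single $\beta_i$-arc cannot be both sink and source. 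So at least two of the $c(G_i)$ total $\beta_i$-arcs are non-parallel, which gives the desired inequality.

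Since $c(G_i)$ strictly decreases at each step and is always a positive integer, the hierarchy must terminate in finitely many steps, and at termination $G_n$ is primitive (otherwise Proposition~\ref{lem:9} would furnish yet another non-simple reduction). The main obstacle I foresee is making precise the claim that the cycle description of $\gamma_i$ in the proof of Proposition~\ref{lem:8} accounts for \emph{every} $\alpha_i$-parallel $\beta_i$-arc, not merely those adjacent to parallel quadrilateral sectors; this is mild, however, and follows from the parenthetical fact in that proof that each region cut off by $\alpha_i \cup \beta_i$ contains $0$ or $2$ parallel $\beta_i$-arcs, forcing the cycle to absorb the parallel $\beta_i$-arcs of the hexagons or octagon as well.
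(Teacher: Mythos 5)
Your proposal is correct and follows essentially the same route as the paper: Proposition~\ref{lem:9} guarantees the process can continue while the diagram is non-primitive, and termination follows because the carrying curve meets only the parallel arcs, so the intersection number $|\alpha_i\cap\beta_i|$ strictly drops at each step (your count of the non-parallel arcs via the two bigons is just the intrinsic restatement of the paper's observation that the carrying curve misses all rainbow arcs, of which there are $2q\geq 2$). The only cosmetic difference is that you quantify the drop as at least $2$, whereas the paper only needs strict decrease.
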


\begin{proof}
    By Proposition~\ref{lem:9}, we can always do such replacements for a non-simple, non-primitive, reduced (1,1)-diagram. Hence we only need to show that this must stop in finitely many steps. Suppose $G_i=(\Sigma,\alpha_i,\beta_i,z,w)$ is non-primitive, and $G_{i+1}=(\Sigma,\alpha_{i+1},\beta_{i+1},z,w)$ is obtained by replacing $\alpha_i$ with its carrying curve $\alpha_{i+1}$ while $\beta_{i+1}=\beta_i$. Now if we place $\alpha_i$ in standard position when viewing $G_i$, we can spot $\alpha_{i+1}$ as in Figure~\ref{fig:9}, in that it only intersects vertical $\beta_i$-arcs. Since $G_i$ is non-primitive, there are rainbow arcs. Hence the number of intersections of $(\alpha_{i+1},\beta_{i+1})$ is strictly less than that of $(\alpha_i,\beta_i)$. This suffices for a finiteness argument.
\end{proof}

\subsection{Inductive construction of laminations}
\label{subsec:3.3}

In this subsection we set up the proof of the following proposition:

\begin{prop}
    Let $(\Sigma,\alpha,\beta,z,w)$ be a reduced (1,1)-diagram that is not primitive, with $\delta$ carrying $\beta$. Suppose $(\Sigma,\alpha,\delta,z,w)$ is non-simple and its associated branched surface fully carries a lamination. Then the branched surface associated to $(\Sigma,\alpha,\beta,z,w)$ also fully carries a lamination.
    \label{prop:3}
\end{prop}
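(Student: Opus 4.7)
The plan is to build a lamination on $\mathcal{B}_\beta$, the branched surface for $(\Sigma,\alpha,\beta,z,w)$, by modifying the given lamination $\mathcal{L}'$ on $\mathcal{B}_\delta$ in a neighborhood of the annulus $A\subset\Sigma$ cobounded by $\beta$ and its carrying curve $\delta$. The geometric input is that $\delta$ is isotopic to $\beta$ in $\Sigma$ and meets $\alpha$ only in the $\beta$-parallel $\alpha$-arcs (Definition~\ref{def:7}), so all new intersection points of $\alpha\cap\beta$ not already present in $\alpha\cap\delta$, together with the corresponding new branch locus, lie in a neighborhood of $A$. Moreover, the $\beta$- and $\delta$-compression disks $D_\beta$, $D_\delta$ are isotopic through the $\beta$-handlebody and, together with $A$, cobound a $3$-ball.

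First, I would construct an intermediate branched surface $\widetilde{\mathcal{B}}$ that records both $\mathcal{B}_\delta$ and $\mathcal{B}_\beta$ simultaneously. Concretely, $\widetilde{\mathcal{B}}$ is obtained from $\mathcal{B}_\delta$ by gluing in the extra ingredients of $\mathcal{B}_\beta$ along the torus $\Sigma$: the arcs of $\beta$ not parallel to $\delta$, the new $\alpha\cap\beta$ intersection points with their smoothings, and the $\beta$-disk $D_\beta$ replacing $D_\delta$ across the $3$-ball between them. Branch directions on the new sectors are dictated by the orientations of $(\alpha,\beta)$ as in Definition~\ref{def:5}. By design $\widetilde{\mathcal{B}}$ contains $\mathcal{B}_\delta$ as a sub-branched surface, and $\mathcal{B}_\beta$ is recovered from $\widetilde{\mathcal{B}}$ by a collapse of the annular region between the two copies of $A$.

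Second, I would extend $\mathcal{L}'$ across the added sectors to a lamination $\widetilde{\mathcal{L}}$ on $\widetilde{\mathcal{B}}$. Since the extra sectors are disks and annuli attached to the boundary traces of $\mathcal{L}'$ on $\partial_h N(\mathcal{B}_\delta)$, each one can be filled by parallel copies of itself provided the branch directions are consistent at the attaching loci. Restricting $\widetilde{\mathcal{L}}$ to a sub-neighborhood that maps onto $\mathcal{B}_\beta$ then produces the desired lamination fully carried by $\mathcal{B}_\beta$. If the collapse requires identifying a pair of boundary circles, Lemma~\ref{lem:4} applies; its no-disk-leaf hypothesis should follow from Lemma~\ref{lem:6} together with a check that the extension does not introduce new disk leaves.

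The main obstacle is the first step: ensuring that the branch directions on the added sectors are consistent with both the $(\alpha,\delta)$- and $(\alpha,\beta)$-orientations. The source and sink bigons of the two diagrams can sit in different regions of $\Sigma$, and the conventions of Definition~\ref{def:5} prescribe removing the source bigon and puncturing the sink bigon according to orientation. Reconciling the two cases — for instance by arguing via the sink and source tubes of Definition~\ref{def:2} that the two bigon pairs of $(\Sigma,\alpha,\beta)$ and $(\Sigma,\alpha,\delta)$ lie in compatible positions along the tubes — is the technical heart of the argument, and I expect this to be what subsections~\ref{subsec:3.4} and~\ref{subsec:3.5} address in detail.
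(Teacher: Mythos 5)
The overall shape of your construction does match the paper's: its intermediate object $\mathcal{B}'$ (obtained by attaching a $\delta$-annulus to the branched surface $\mathcal{B}$ of $(\Sigma,\alpha,\beta,z,w)$, so that the branched surface $\mathcal{C}'$ of $(\Sigma,\alpha,\delta,z,w)$, minus a small disk, sits inside it as a sub-branched surface) is essentially your $\widetilde{\mathcal{B}}$, and its difference piece $\mathcal{A}=\mathcal{B}'-\mathcal{C}'$ is your collection of added sectors. The genuine gap is your second step, the claim that the added sectors ``can be filled by parallel copies of themselves provided the branch directions are consistent at the attaching loci.'' The difference piece is not a union of independently foliable disks and annuli: the $\beta$-disk together with the torus sectors inside the $(\alpha,\delta)$-source bigon forms a branched surface with a nontrivial branch locus carrying many double points (one for each intersection of $\beta$ with the boundary $\alpha$-arc of the $(\alpha,\delta)$-source bigon, coming from the alternating rainbow $\beta$-arcs), and these generically produce sink disks. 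A branched surface with a sink disk need not fully carry any lamination, so no ``parallel copies'' argument can work here. Producing a lamination fully carried by this piece is the technical heart of the paper: it builds a standalone branched surface $\mathcal{A}''$ by attaching $\mathcal{A}$ to $\partial_h N(\mathcal{C}')$ and pinching the resulting boundary train track, reduces the combinatorics to a rational tangle strand on a four-pointed sphere, and runs an induction of ``sink tube pushes'' until the strand has slope $1/q$, at which point Lemma~\ref{lem:3} applies (this is Lemma~\ref{lem:14} and all of subsection~\ref{subsec:3.5}). Nothing in your proposal supplies or substitutes for this step.

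Two further points. First, even granting a lamination on the added piece, you must splice it to $\mathcal{L}'$ along the cusp circle of $\mathcal{C}'$, where the new leaves meet the vertical boundary annulus of $N(\mathcal{C}')$ in a prescribed one-dimensional lamination; the paper does this in Lemma~\ref{lem:12} by re-laminating the complementary $I$-bundle $G\times I$ of $\mathcal{L}'$ at that cusp, which needs $G$ not to be a disk (no disk of contact, Lemma~\ref{lem:6}). Your proposal does not address this interface. Second, the obstacle you single out --- reconciling the bigon positions of the two diagrams --- is in fact the easy part: with the orientation conventions of Definition~\ref{def:5} the $(\alpha,\beta)$-source bigon is always nested inside the $(\alpha,\delta)$-source bigon (the innermost rainbow $\delta$-arc lies amid the rainbow $\beta$-arcs), so the two removals are automatically compatible.
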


The proof of this proposition relies on a series of constructions of branched surfaces. We will introduce these constructions in several steps. Each step is followed by one or two lemma(s) showing that the ``fully carrying laminations'' property is carried over. The convinced readers might skip the proofs first to get an overview of the whole construction. 
~\\

\textbf{Step 1: Attaching the $\delta$-annulus.} Let $\mathcal{B}$ be the branched surface associated to $(\Sigma,\alpha,\beta,z,w)$. Consider the curve $\delta$ on the torus. Pick an orientation of $\delta$ so that the pair $(\alpha,\delta)$ forms sink and source bigons. Now we can attach an annulus to $\mathcal{B}$ by identifying one of its boundary components with the $\delta$ curve, so that it is attached to $\Sigma$ on the same side as the $\beta$-disk, and then smooth it according to the orientation, so that when observed on $\Sigma$ the branch direction still points to the left of the $\delta$-curve direction. We call the resulting branched surface $\mathcal{B'}$. (We remark that we attach an annulus (rather than a disk) because the $\delta$-disk is supposed to intersect the (1,1)-knot represented by $(\Sigma,\alpha,\beta,z,w)$ essentially once, see Figure~\ref{fig:14}.)

\begin{lem}
    If $\mathcal{B'}$ fully carries a lamination, then so does $\mathcal{B}$.
    \label{lem:11}
\end{lem}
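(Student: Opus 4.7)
The plan is to exploit the fact that $\mathcal{B}$ sits inside $\mathcal{B}'$ as a sub-branched-surface, obtained from $\mathcal{B}'$ simply by deleting the interior of the $\delta$-annulus. Since the $\delta$-annulus is smoothed so that the branch direction along $\delta$ points to the left of $\delta$, i.e.\ off the annulus into $\Sigma$, the $\delta$-annulus looks like a ``source'' along its identified boundary. Morally, this is the same situation as removing a source disk (cf.\ the remark following Definition~\ref{def:5}), and one expects any lamination on the larger branched surface to restrict to a lamination on the smaller one.

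Concretely, I would first set up the regular neighborhoods so that $N(\mathcal{B})$ sits as a sub-$I$-bundle of $N(\mathcal{B}')$: away from $\delta$ the $I$-fibers of the two agree, and near $\delta$ one takes the $I$-fibers of $N(\mathcal{B}')$ lying on the torus side (i.e.\ not entering a neighborhood of the $\delta$-annulus) as the $I$-fibers of $N(\mathcal{B})$. Then given a lamination $\mathcal{L}'\subset N(\mathcal{B}')$ fully carried by $\mathcal{B}'$, I would set $\mathcal{L}=\mathcal{L}'\cap N(\mathcal{B})$. Closedness of $\mathcal{L}$ in $N(\mathcal{B})$ and the local product structure of leaves are inherited directly from $\mathcal{L}'$, and transversality of $\mathcal{L}$ to the $I$-fibers of $N(\mathcal{B})$ follows because those fibers are a subcollection of the fibers of $N(\mathcal{B}')$.

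To verify that $\mathcal{L}$ fully carries $\mathcal{B}$, note that each sector of $\mathcal{B}$ is either a sector of $\mathcal{B}'$ or a union of sectors of $\mathcal{B}'$: the only thing attaching the $\delta$-annulus does is turn the curve $\delta$ into additional branch locus on the torus $\Sigma$, which subdivides certain sectors of $\mathcal{B}$ lying on $\Sigma$. Since $\mathcal{L}'$ meets every sector of $\mathcal{B}'$, its restriction $\mathcal{L}$ meets every sub-sector of $\mathcal{B}$ along $\Sigma$, and therefore every sector of $\mathcal{B}$ as a whole. Thus $\pi_{\mathcal{B}}(\mathcal{L})=\mathcal{B}$, and $\mathcal{L}$ is fully carried by $\mathcal{B}$.

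The one point that requires genuine care, and is the main obstacle, is the behavior near $\delta$: because $\delta$ is branch locus of $\mathcal{B}'$ but not of $\mathcal{B}$, the $I$-fibers of $N(\mathcal{B}')$ bifurcate at $\delta$ into one family going into $\Sigma$ and one family going into the $\delta$-annulus, while the $I$-fibers of $N(\mathcal{B})$ do neither. One must verify that once the $\delta$-annulus is discarded, the portion of $\mathcal{L}'$ sitting on the $\Sigma$-side near $\delta$ still closes up to a lamination transverse to the $I$-fibers of $N(\mathcal{B})$. The source-like direction of the branching at $\delta$ is precisely what makes this work: no leaf of $\mathcal{L}'$ is forced into the $\delta$-annulus by the branching, so truncating at the $\delta$-annulus simply throws away leaves (or portions of leaves) that happened to enter it, without disturbing the coherent product structure on the $\Sigma$-side.
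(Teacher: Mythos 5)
The gap is exactly at the point you flag as ``the main obstacle,'' and your resolution of it does not work. Because $\mathcal{L}'$ is \emph{fully} carried by $\mathcal{B}'$, it must meet every $I$-fiber over the $\delta$-annulus, so there are necessarily leaf portions lying over the annulus; and since the $\delta$-annulus meets the rest of $\mathcal{B}'$ only along $\delta$ (its other boundary circle is free), every such portion continues across $\delta$ into the part of the lamination lying over $\Sigma$. Consequently $\mathcal{L}'\cap N(\mathcal{B})$ contains surfaces with boundary circles sitting over $\delta$ at interior points of the fibers --- and $\delta$ is \emph{not} branch locus of $\mathcal{B}$, so there is no vertical boundary of $N(\mathcal{B})$ there for leaves to end on. These truncated leaves are not properly embedded, so the restriction is not a lamination. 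Discarding entire leaves that enter the annulus fares no better: the result need not be closed, and need not meet every fiber of $N(\mathcal{B})$ (a leaf dense in the lamination could force you to discard everything). The ``source-like'' branching at $\delta$ only says that no leaf is \emph{forced} into the annulus as it crosses $\delta$ from the one-sheeted side; it does not prevent leaves from being there, and full carrying guarantees that they are. Likewise, the remark after Definition~\ref{def:5} about deleting a source disk is a statement about the branched surface and its branch locus, not a mechanism for transferring a carried lamination to a sub-branched surface; that mechanism is only available when the deleted piece is attached along $\partial\mathcal{B}$, so that the cut happens at vertical boundary.

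What is actually needed is a way to re-route or absorb the part of $\mathcal{L}'$ lying over the $\delta$-annulus, and this is where the specific topology of the construction enters. The paper first shows that $\delta$, the cusp circle, and the boundary circle of the hole in the sink bigon all represent the knot meridian on $\partial N(\mathcal{B})$, hence are parallel there. Then, depending on the branch direction of the $\delta$-annulus, it either glues the free boundary circle of the annulus to the boundary circle in the sink bigon and collapses the resulting annulus bubble, or first inserts an extra leaf at the cusp, invokes Gabai's gluing lemma (Lemma~\ref{lem:4}) to glue the $\delta$- and cusp-circles, and then collapses the bubble. Either way the annulus is eliminated by closing it up against another meridional circle, not by restriction; some argument of this kind is unavoidable, and your proposal is missing it.
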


\begin{proof}[Proof of Lemma~\ref{lem:11}]
    By Proposition~\ref{lem:5} we can identify $N(\mathcal{B})$ with the knot complement. Recall from the proof of Lemma~\ref{lem:6} that both the cusp circle and the boundary circle of $\mathcal{B}$ represent the knot meridian on $\partial N(\mathcal{B})$. We claim that the $\delta$ curve on $\partial N(\mathcal{B})$ (where the $\delta$-annulus is to be attached) also represents the knot meridian. In fact, we can embed $\mathcal{B}$ in the total space $L$ canonically as described in the proof of Proposition~\ref{lem:5}, where the torus $\Sigma$ serves as a Heegaard surface of $L$. Now since $\delta$ is essential and disjoint from $\beta$, it is also a compression curve (of the Heegaard splitting of $L$), and thus bounds a disk in $L-N(\mathcal{B})$. However, since $\delta$ intersects $\alpha$ in strictly fewer points than $\beta$ (see Figure~\ref{fig:9}, where the carrying curve only intersects the vertical arcs), $\delta$ and $\beta$ are not isotopic on the twice-punctured torus $(\Sigma,z,w)$. It follows that on $\Sigma$ the two annuli cut off by $\beta$ and $\delta$ would each contain a basepoint, see Figure~\ref{fig:14}. Hence the $\delta$-disk essentially intersects the knot once(in the solid torus $L-N(\mathcal{B})$). Hence the $\delta$ curve is essential on $\partial N(\mathcal{B})$ and bounds a disk in $L-N(\mathcal{B})$, thus representing the knot meridian.
    ~\\
    
    \begin{figure}[!hbt]
        \begin{overpic}{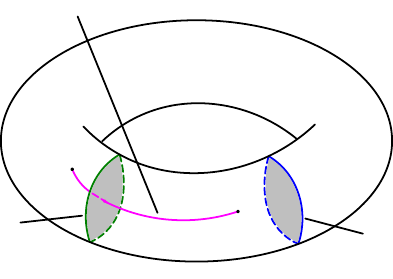}
            \put(1,13){$\color{green} \delta$}
            \put(93,10){$\color{ao} \beta$}
            \put(1,70){(1,1)-knot arc inside the solid torus}
            \put(14.5,28){$z$}
            \put(57,19){$w$}
            \put(100,25){$\Sigma$}
        \end{overpic}
        \caption{The $\delta$-disk and the (1,1)-knot}
        \label{fig:14}
    \end{figure}
    
    Denote the two horizontal boundary components of $N(\mathcal{B})$ as $\partial_h^{+\prime},\partial_h^{-\prime}$, so that the $\delta$-annulus is attached to $\partial_h^{+\prime}$. Since $\delta$ also represents the knot meridian, $\partial_h^{+\prime}$ is divided by $\delta$ into two annuli. See the left two figures in Figure~\ref{fig:15}, where the top and bottom sink bigon holes are identified to form the boundary torus of $N(\mathcal{B})$. Now suppose $\mathcal{B}'$ fully carries a lamination. According to the branch direction of $\delta$ (see Figure~\ref{fig:15}), we can do different operations to eliminate the $\delta$-annulus and get a lamination fully carried by $\mathcal{B}$. 
    
    \begin{figure}[!thb]
        \begin{overpic}[scale=0.9]{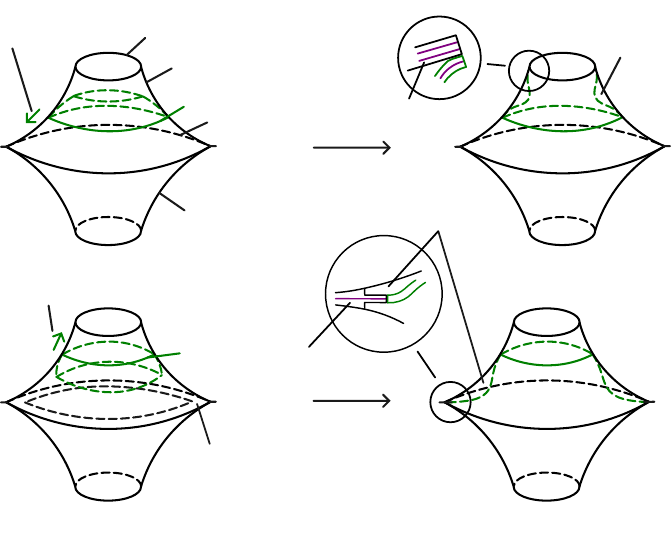}
            \put(20,75){sink bigon hole}
            \put(26,69){$\partial_h^{+\prime}$}
            \put(32,60){cusp}
            \put(28,46){$\partial_h^{-\prime}$}
            \put(28,63){$\color{green}\delta$}
            \put(27.2,26){$\color{green}\delta$}
            \put(-3,73){br. direction}
            \put(0,35){branch direction}
            \put(86,72){annulus bubble}
            \put(53,46){annulus bubble}
            \put(28,10){annulus attached to cusp}
            \put(14,0){$\mathcal{B''}$}
            \put(80,0){$\mathcal{B'''}$}
            \put(53,61){lamination}
            \put(43,24){$F'$}
        \end{overpic}
        \caption{Eliminating annulus}
        \label{fig:15}
    \end{figure}

    If the branch direction points to the cusp (Figure~\ref{fig:15} top), we can attach a neighborhood of the boundary circle on the $\delta$-annulus to a neighborhood of the boundary circle in the sink bigon. The new branched surface (as depicted in Figure~\ref{fig:15} top-right) then fully carries a lamination while containing an annulus bubble. By collapsing the annulus bubble, the lamination collapses to become a lamination $\mathcal{N}$ fully carried by $\mathcal{B}$.
    
    Now suppose the branch direction points to the boundary circle in sink bigon (Figure~\ref{fig:15} bottom). We show that we can still modify the lamination $\mathcal{N'}$ fully carried by $\mathcal{B'}$. Recall after some isotopy we can assume that the metric completion of $N(\mathcal{B'})-\mathcal{N'}$ consists of $I$-bundles. Consider the $I$-bundle whose boundary contains the vertical boundary corresponding to the cusp of $N(\mathcal{B})$. Since our branched surface is transversely oriented, this $I$-bundle is some $F\times [0,1]$, where $F$ is oriented and has a circle boundary $C$ corresponding to the cusp ($F$ itself may have multiple boundary components). We can extend $F\times \frac{1}{2}$ along $C$ (i.e. attaching a small annulus to $C$) and add this leaf $F'$ to $\mathcal{N'}$. The resulting lamination $\mathcal{N''} = \mathcal{N'}\cup \{F'\}$ is fully carried by some branched surface $\mathcal{B''}$ which is obtained by attaching an annulus to the cusp of $\mathcal{B'}$ (see Figure~\ref{fig:15} bottom-left). Since attaching (thickened) annuli along boundary does not change the topology of $N(\mathcal{B})$, we have $N(\mathcal{B''})$ also homeomorphic to the knot complement. Moreover, all three boundary circles of $\mathcal{B''}$ (one each coming from the sink disk, the $\delta$-annulus, and the cusp) correspond to the knot meridian. Hence $\mathcal{B''}$ cannot carry disks bounded by these boundary components. Now by the gluing lemma Lemma~\ref{lem:4} we can glue the two boundary circles corresponding to $\delta$ and cusp, and get a branched surface $\mathcal{B'''}$ fully carrying a lamination (see Figure~\ref{fig:15} bottom-right). One can then collapse the annulus bubble of $\mathcal{B'''}$ to get $\mathcal{B}$, while the lamination is collapsed to some $\mathcal{N}$ fully carried by $\mathcal{B}$.
\end{proof}

\textbf{Step 2: Taking the difference.} Let $\mathcal{C}$ be the branched surface associated to $(\Sigma,\alpha,\delta,z,w)$. We can remove a small open disk in the interior of the $\delta$ disk to get a branched surface $\mathcal{C'}$ (which still fully carries a lamination if $\mathcal{C}$ does). We can then think of the branched surface $\mathcal{C'}$ as a sub-branched surface of $\mathcal{B'}$ (i.e. there exists an embedding $\mathcal{C'}\hookrightarrow \mathcal{B'}$ such that each sector of $\mathcal{C'}$ is a union of sectors of $\mathcal{B'}$). We consider the complement $\mathcal{A}=\mathcal{B'}-\mathcal{C'}$. $\mathcal{A}$ consists of the $\beta$-disk and sectors on the torus corresponding to the $(\alpha,\delta)$-source bigon minus the $(\alpha,\beta)$-source bigon (see the shaded regions in Figure~\ref{fig:12}; notice the two bigons are always nested in this way, since when we place $\alpha$ in standard position, the (innermost) rainbow $\delta$-arc exists (by non-simpleness supposition of (1,1)-diagram) and is amid the rainbow $\beta$-arcs (lying in a $\beta$-parallel sector)).

\begin{figure}[!hbt]
    \begin{overpic}[scale=0.6]{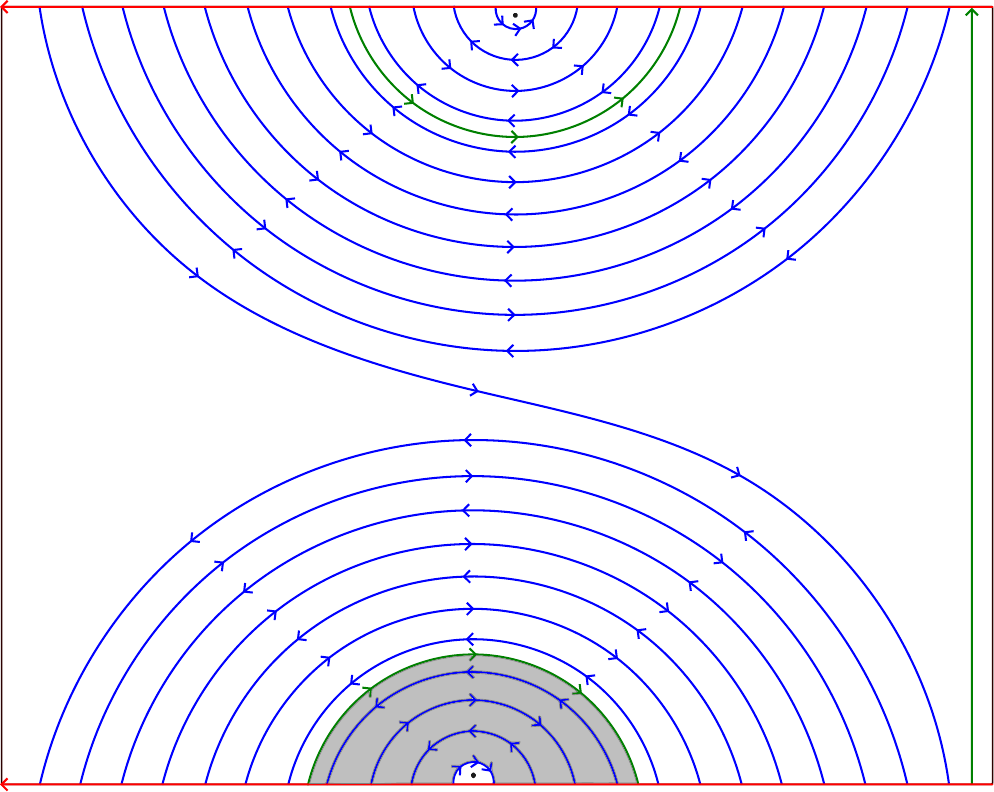}
        \put(95,60){$\color{green} \delta$}
        \put(80,30){$\color{ao} \beta$}
        \put(98,-1.8){$\color{red} \alpha$}
        \put(29.5,-1.7){\tiny$M$}
        \put(63,-1.7){\tiny$N$}
        \put(34,80){\tiny$N$}
        \put(97,80){\tiny$M$}
    \end{overpic}
    \caption{Knot (23,11,1,7)}
    \label{fig:12}
\end{figure}

$\mathcal{A}$ alone cannot pass on laminations, so we need some further constructions. We construct a branched surface $\mathcal{A'}$ consisting of $\mathcal{A}$ and the horizontal boundary $\partial_h N(\mathcal{C'})$, where $\mathcal{A}$ intersects the two horizontal boundary components in the same branch direction as if intersecting with $\mathcal{C'}$, see Figure~\ref{fig:11}. Denote the two horizontal boundary components $\partial_h^+$ and $\partial_h^-$. These are pairs of pants each with three boundary circles. We denote the boundary circles coming from the cusp of $\mathcal{C'}$ as $l_{\mathcal{C'}}^{\pm}$ (the $\pm$ sign here indicates if it's the boundary circle of $\partial_h^+$ or $\partial_h^-$), the boundary circles in the sink bigon as $l_{s}^{\pm}$, and the boundary circles at the $\delta$-annulus as $l_{\delta}^{\pm}$. Suppose the $\beta$-disk (of $\mathcal{A}$) is to be attached to $\partial_h^+$. 

\begin{figure}[!hbt]
    \begin{overpic}[scale=0.6]{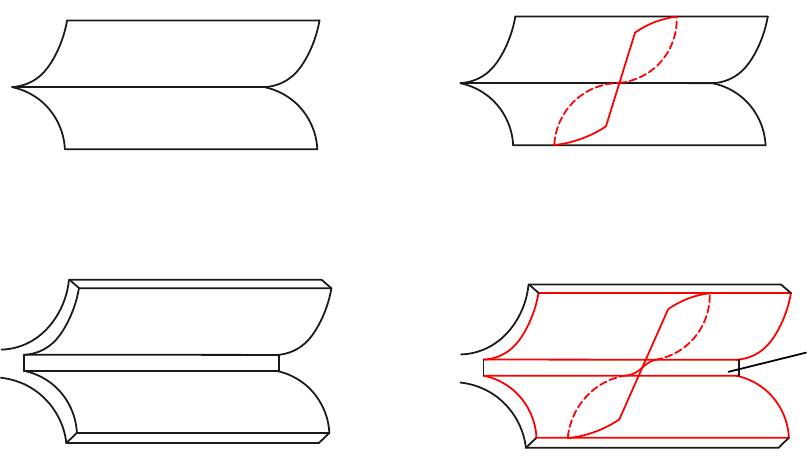}
        \put(21,31){$\mathcal{C'}$}
        \put(17,-4){$N(\mathcal{C'})$}
        \put(63,31){$\mathcal{B}$ (with $\mathcal{A}$ in red)}
        \put(69,-4){$\mathcal{A'}$ (in red)}
        \put(100,12){\small boundary train track}
    \end{overpic}
    \caption{Attaching $\mathcal{A}$ to $\partial_h N(\mathcal{C'})$}
    \label{fig:11}
\end{figure}

We remark that attaching $\mathcal{A}$ to $\partial_h^{\pm}$ gives us directions for how to extend laminations carried by $\mathcal{A}$ in $N(\mathcal{B})$, eventually allowing us to prove Lemma~\ref{lem:12} below. On the other hand, we use the horizontal boundary of $N(\mathcal{C'})$ instead of $\mathcal{C'}$ itself, since in this way we could make the branch locus simpler, so that we could better control the double points, which is crucial in our proof. More details will be explained in subsection~\ref{subsec:3.5}, especially in the remark following Figure~\ref{fig:21}.

\begin{lem}
    Suppose $\mathcal{A'}$ and $\mathcal{C'}$ fully carry laminations, then $\mathcal{B'}$ also fully carries a lamination.
    \label{lem:12}
\end{lem}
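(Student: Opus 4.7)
The strategy is to glue a lamination $\mathcal{L}_{\mathcal{C'}}$ on $N(\mathcal{C'})$ together with the $\mathcal{A}$-portion of a lamination $\mathcal{L}_{\mathcal{A'}}$ on $N(\mathcal{A'})$ along their common horizontal surfaces $\partial_h^{\pm}$. First I would extract $\mathcal{L}_{\mathcal{C'}}$ fully carried by $\mathcal{C'}$ and isotope it so that $\partial_h N(\mathcal{C'}) \subset \mathcal{L}_{\mathcal{C'}}$, and similarly $\mathcal{L}_{\mathcal{A'}}$ fully carried by $\mathcal{A'}$ with $\partial_h N(\mathcal{A'}) \subset \mathcal{L}_{\mathcal{A'}}$. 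Since $\partial_h^{\pm}$ are themselves sectors of $\mathcal{A'}$ by construction, the surfaces $\partial_h^{\pm}$ occur as leaves in $\mathcal{L}_{\mathcal{A'}}$ as well as in $\mathcal{L}_{\mathcal{C'}}$.

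Next I would decompose the neighborhood $N(\mathcal{B'})=N(\mathcal{C'})\cup N(\mathcal{A})$, where $N(\mathcal{A})$ is the closure of $N(\mathcal{B'})\setminus N(\mathcal{C'})$; the two pieces meet exactly along $\partial_h N(\mathcal{C'})$. A neighborhood $N(\mathcal{A'})$ of $\mathcal{A'}$ can be arranged to equal $N(\mathcal{A})$ together with a small bi-collar of $\partial_h N(\mathcal{C'})$ reaching into $N(\mathcal{C'})$. Cutting each leaf of $\mathcal{L}_{\mathcal{A'}}$ along $\partial_h^{\pm}$ and discarding the bi-collar part yields a lamination $\mathcal{L}_{\mathcal{A}}$ of $N(\mathcal{A})$ whose boundary curves lie on $\partial_h N(\mathcal{C'})\subset \mathcal{L}_{\mathcal{C'}}$. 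The candidate lamination is $\mathcal{L}:=\mathcal{L}_{\mathcal{C'}}\cup\mathcal{L}_{\mathcal{A}}$. Every sector of $\mathcal{B'}=\mathcal{C'}\cup\mathcal{A}$ is either carried by $\mathcal{L}_{\mathcal{C'}}$ or by $\mathcal{L}_{\mathcal{A}}$, so $\mathcal{L}$ is full; transversality to the $I$-fibers of $N(\mathcal{B'})$ holds on each piece separately, and agreement across $\partial_h N(\mathcal{C'})$ is built into the construction of $\mathcal{A'}$, where $\mathcal{A}$ is attached to $\partial_h^{\pm}$ with the same branch direction used when $\mathcal{A}$ meets $\mathcal{C'}$ inside $\mathcal{B'}$.

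The main obstacle will be verifying that $\mathcal{L}$ is a genuine lamination. The leaves of $\mathcal{L}_{\mathcal{A}}$ attach to the leaf $\partial_h N(\mathcal{C'})\subset\mathcal{L}_{\mathcal{C'}}$ along curves that could in principle accumulate onto cusp circles of $\partial_h^{\pm}$ or pile up near intersections between the branch locus of $\mathcal{A}$ and that of $\partial_h^{\pm}$. One must check that the resulting union is closed in $N(\mathcal{B'})$ and is foliated by properly embedded surfaces. The reason $\mathcal{A'}$ is defined using $\partial_h N(\mathcal{C'})$ rather than $\mathcal{C'}$ itself is exactly to keep the branch locus of $\mathcal{A'}$ simple, so that these accumulation curves and double points admit explicit combinatorial control; I expect the technical analysis of Section~\ref{subsec:3.5} to supply precisely the double-point bookkeeping needed to finish the verification.
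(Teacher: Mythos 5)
Your overall decomposition ($N(\mathcal{B'})=N(\mathcal{C'})\cup N(\mathcal{A})$, transplant the $\mathcal{A'}$-lamination into $N(\mathcal{B'})$ and combine it with the $\mathcal{C'}$-lamination along $\partial_h N(\mathcal{C'})$) is the same as the paper's, but there is a genuine gap at exactly the point you flag as ``the main obstacle,'' and the tool you point to for closing it is the wrong one. The real problem is not accumulation of attaching curves or double-point bookkeeping: after transplanting, the leaves of the $\mathcal{A'}$-lamination that run over the sectors $\partial_h^{\pm}$ terminate on the vertical boundary annulus of $N(\mathcal{C'})$ at the $\mathcal{C'}$-cusp (this is where $\mathcal{A'}$ has its boundary train track $\mathcal{T}$), and that annulus lies in the \emph{interior} of $N(\mathcal{B'})$. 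These leaf-ends trace out a 1-dimensional lamination of the annulus, and unless they are continued into $N(\mathcal{C'})$ the union is not a lamination; in your version this failure is visible already in the sentence where leaves of $\mathcal{L}_{\mathcal{A}}$ are said to have boundary curves lying on the leaf $\partial_h N(\mathcal{C'})$ of $\mathcal{L}_{\mathcal{C'}}$ --- a surface-with-boundary resting on another leaf is a branched object, not a leaf of a lamination. Section~\ref{subsec:3.5} will not supply the fix: it concerns eliminating sink disks of $\mathcal{A''}$ by splittings, not the properness of leaves here.

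The paper closes the gap as follows. Isotope $\mathcal{L}_{\mathcal{C'}}$ so that the metric completion of $N(\mathcal{C'})-\mathcal{L}_{\mathcal{C'}}$ is a union of $I$-bundles; the component abutting the vertical annulus at the $\mathcal{C'}$-cusp is a product $G\times I$ with $G$ oriented, and $G$ cannot be a disk because $\mathcal{C}$ (hence $\mathcal{C'}$) has no disk of contact (Lemma~\ref{lem:6}). One then fills $G\times I$ with a lamination transverse to the $I$-fibers whose trace on that vertical annulus is the prescribed 1-dimensional lamination coming from the transplanted $\mathcal{A'}$-lamination (\cite{li2002laminar}, Lemmas 3.1 and 3.2 when $G$ has positive genus; a trivial spreading to another boundary component or end when $G$ is planar), and pastes it in along the annulus. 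Both the no-disk-of-contact input and this extension step are absent from your argument, so as written the proof does not go through.
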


\begin{proof}[Proof of Lemma~\ref{lem:12}]
    We can glue ${\partial}_h^{\pm}$ in $\mathcal{A'}$ back to the corresponding horizontal boundary components of $N(\mathcal{C'})$. Now the lamination $\mathcal{M}$ fully carried by $\mathcal{A'}$ can be regarded as a closed subset of $N(\mathcal{B'})$, containing surface leaves transverse to the $I$-fibers. However, $\mathcal{M}$ is not yet a lamination of $N(\mathcal{B'})$ since its leaves are not properly embedded at the $\mathcal{C'}$-cusp (here $\mathcal{A'}$ has a boundary train track $\mathcal{T}$ as in Figure~\ref{fig:13} left, see also Figure~\ref{fig:11} bottom-right). The lamination $\mathcal{M}$ intersects the vertical boundary annulus of $N(\mathcal{C'})$ at the cusp in a 1-dimensional lamination. 
    
    Now by our supposition $\mathcal{C'}$ fully carries a lamination $\mathcal{L}$. Similarly to before, after some isotopy we can assume that the metric completion of $N(\mathcal{C'})-\mathcal{L}$ consists of $I$-bundles, and that the $I$-bundle whose boundary contains the vertical annulus at $\mathcal{C'}$-cusp is some $G\times I$ for $G$ an oriented surface. Moreover, since $\mathcal{C}$ does not contain a disk of contact (Lemma~\ref{lem:6}), neither does $\mathcal{C'}$ (which is obtained by removing a disk from $\mathcal{C}$). Hence $G$ cannot be a disk. It is then a standard argument that $G\times I$ can be given a lamination transverse to the $I$-fibers, such that when restricted to the boundary vertical annulus at the $\mathcal{C'}$-cusp it is any given 1-dimensional lamination. (If $G$ has positive genus, this is~\cite{li2002laminar}, Lemma 3.1 and 3.2; if $G$ is planar, it must have multiple boundary components or ends, and we can trivially extend the 1-dimensional lamination by sending it to one of the other boundary components or ends.) By setting the 1-dimensional lamination to be the same as that of $\mathcal{M}$ at $\mathcal{C'}$-cusp, we can paste our lamination in $G\times I$ to $\mathcal{M}$ along the vertical annulus. We then get a lamination $\mathcal{M'}$ carried by $\mathcal{B'}$. Now $\mathcal{N'}=\mathcal{L}\cup\mathcal{M'}$ is a lamination fully carried by $\mathcal{B'}$.
\end{proof}

\textbf{Step 3: Collapsing the boundary train track.} For technical reasons (to apply Lemma~\ref{lem:3}) we need to ``collapse'' the boundary train track of $\mathcal{A'}$ to circles. Recall that each of $\partial^{\pm}_h$ has 3 boundary circles. When attaching $\mathcal{A}$ to them to obtain $\mathcal{A'}$, $l_{\mathcal{C'}}^{\pm}$ are the only boundary circles that $\mathcal{A}$ intersects. Hence besides the circle boundary components, $\mathcal{A'}$ has a boundary train track at the $\mathcal{C'}$-cusp (one can spot this cusp on the (1,1)-diagram as $\alpha\cup\delta-\partial((\alpha,\delta)$-source bigon)), consisting of $l_{\mathcal{C'}}^{\pm}$ and two short boundary arcs from $\mathcal{A}$, see Figure~\ref{fig:11} and Figure~\ref{fig:13} left. Denote the boundary train track as $\mathcal{T}$. 

\begin{figure}[!hbt]
    \begin{overpic}[scale=0.6]{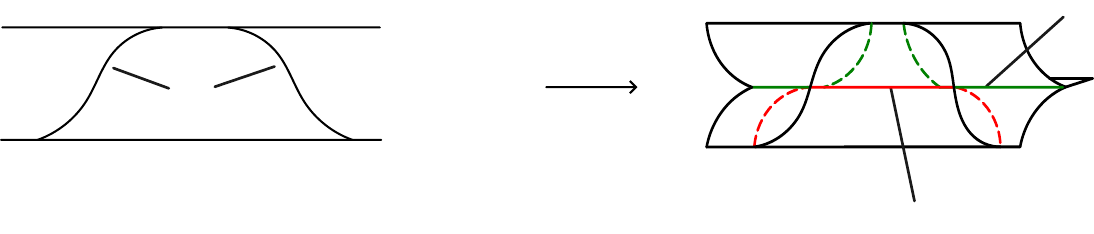}
        \put(40,12){$\times \;I$}
        \put(35.5,7){$l_{\mathcal{C'}}^+$}
        \put(35.5,18){$l_{\mathcal{C'}}^-$}
        \put(16,11.5){$\mathcal{A}$}
        \put(83,1){$\color{red}\alpha$}
        \put(97.5,20){$\color{green}\delta$}
        \put(72,14){\tiny$M$}
        \put(87,14){\tiny$N$}
        \put(6,3){\small boundary train track}
    \end{overpic}
\caption{Collapsing at the $\mathcal{C'}$-cusp}
\label{fig:13}
\end{figure}

Now we collapse/pinch $\mathcal{T}$ into a single circle as shown in Figure~\ref{fig:13}. To give a detailed description, take a neighborhood of $\mathcal{T}$ in $\mathcal{A'}$ and denote it as $\mathcal{T}\times I$ (as shown in Figure~\ref{fig:13} left), with $\mathcal{T}\times\{0\}$ representing the actual boundary of $\mathcal{A'}$. Our ``collapse'' is then described as a quotient operation on $\mathcal{T}\times [0,1/2]$, where we keep the $[0,1/2]$ factor unchanged and quotient $\mathcal{T}$. More precisely, we map the two $\mathcal{A}$-arcs of the boundary train track to two points, and identify the corresponding $l^{\pm}_{\mathcal{C'}}$-arcs cut out by the $\mathcal{A}$-arcs. See Figure~\ref{fig:13}, where we note that after collapsing the two double points $M,N$ (obtained by collapsing the $\mathcal{A}$-arcs) actually correspond to the vertices of the $(\alpha,\delta)$-source bigon (see them also in Figure~\ref{fig:12}). 

The resulting branched surface $\mathcal{A''}$ has a circle boundary at the $\mathcal{C'}$-cusp instead of a train track, and has branch locus 2 cusp circles (one is $\delta$ and the other is ($\alpha\cup\beta-\partial((\alpha,\beta)$-source bigon)). See Figure~\ref{fig:13} right, where the $\alpha$-locus is joined with $\beta$ at the $(\alpha,\beta)$-source bigon away from the $\mathcal{C'}$-cusp).

\begin{lem}
    $\mathcal{A'}$ fully carries a lamination if $\mathcal{A''}$ does.
    \label{lem:13}
\end{lem}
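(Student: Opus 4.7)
The plan is to invert the collapse from Step 3, which was a local quotient on the collar $\mathcal{T}\times[0,1/2]$ of the boundary train track $\mathcal{T}$ at the $\mathcal{C'}$-cusp. Outside a thickening of this collar, $\mathcal{A'}$ and $\mathcal{A''}$ (and hence their regular neighborhoods, together with their $I$-fiber structures) coincide. So a lamination $\mathcal{N}''$ fully carried by $\mathcal{A''}$ restricts to a closed laminated subset $\mathcal{N}_0$ of $N(\mathcal{A'})$ transverse to the $I$-fibers on this common region. The task then reduces to extending $\mathcal{N}_0$ across the thickened collar to a lamination transverse to the $I$-fibers of $N(\mathcal{A'})$, and checking that the extension projects onto all of $\mathcal{A'}$.

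The extension is constructed by lifting the trace of $\mathcal{N}''$ on the vertical annulus at the cusp circle $l$ of $\mathcal{A''}$ through the inverse of the collapse quotient. Under the collapse, a generic point of $l$ has two preimages in $\mathcal{T}$ (one on each of $l_{\mathcal{C'}}^{\pm}$), while the double points $M, N$ correspond to the full $\mathcal{A}$-arcs of $\mathcal{T}$. Correspondingly, each leaf of the $1$-dimensional trace on $l$ lifts to a pair of parallel leaves on the $l_{\mathcal{C'}}^{\pm}$-sides of $\mathcal{T}$, and the local branching of $\mathcal{N}''$ near $M, N$ becomes the train track branching at the switch points of $\mathcal{T}$. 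The $I$-bundle structure of $N(\mathcal{A'})$ over the collar then promotes this $1$-dimensional lift to a $2$-dimensional lamination filling the thickened collar and matching $\mathcal{N}_0$ at the interface.

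I expect the main obstacle to be the switch points $M, N$ of $\mathcal{T}$. At each such point three edges of the train track meet, and the extended lamination must branch in a way compatible both with the branch directions of $\mathcal{A'}$ and with the partial lamination $\mathcal{N}_0$ coming from outside the collar. Since the collapse preserves co-orientations and branch directions, and since the quotient only identifies paired $l_{\mathcal{C'}}^{\pm}$-arcs and pinches the two $\mathcal{A}$-arcs, the local branching of $\mathcal{N}''$ at the branch-locus double points $M, N$ of $\mathcal{A''}$ is exactly what is needed for a consistent lift to the train track structure of $\mathcal{A'}$. Once this matching is verified, the resulting lamination $\mathcal{N'}$ is properly embedded in $N(\mathcal{A'})$, transverse to the $I$-fibers, and satisfies $\pi(\mathcal{N'})=\mathcal{A'}$, so it is fully carried, as required.
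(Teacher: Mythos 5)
Your overall strategy is viable but runs in the opposite direction from the paper's, and it leaves the one genuinely delicate step unverified. The paper's proof does \emph{no} extension at all: since $\mathcal{A''}$ differs from $\mathcal{A'}$ only on the collar $\mathcal{T}\times[0,1/2]$, one can embed $\mathcal{A'}$ into $\mathcal{A''}$ by the identity outside $\mathcal{T}\times[0,1]$ and a linear reparametrization sending $\mathcal{T}\times[0,1]$ onto $\mathcal{T}\times[3/4,1]$ (which lies in the unmodified region), and then simply \emph{restrict} the lamination fully carried by $\mathcal{A''}$ to the image. Truncating a boundary collar never destroys full carrying, so the lemma follows in three lines. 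You instead restrict to the common region and then try to \emph{extend} back across the collar $\mathcal{T}\times[0,1/2]$ of $\mathcal{A'}$, which forces you to manufacture new leaves.

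The gap is precisely where you say ``once this matching is verified.'' Your proposed extension lifts the trace of $\mathcal{N}''$ on the vertical annulus at the collapsed circle $l$ through the inverse of the quotient, producing ``pairs of parallel leaves'' along $l_{\mathcal{C'}}^{\pm}$. As described this is not obviously well-defined at the switches (the quotient is not injective there, so there is no canonical inverse), it is not shown to agree with $\mathcal{N}_0$ at the interface $\mathcal{T}\times\{1/2\}$, and --- most concretely --- two families of leaves parallel to $l_{\mathcal{C'}}^{\pm}$ do not project onto the $\mathcal{A}$-arc portions of $\mathcal{T}\times[0,1/2]$, so full carrying over the collar is exactly the point at issue and is only asserted. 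If you want to keep the extension strategy, the fix is to start from the other end: take the trace of $\mathcal{N}_0$ on the $I$-fibers over $\mathcal{T}\times\{1/2\}$ (a $1$-dimensional lamination fully carried by the train track $\mathcal{T}$, since $\mathcal{N}''$ is fully carried there) and cross it with $[0,1/2]$. That product extension is automatically transverse to the fibers, matches $\mathcal{N}_0$, and covers all of $\mathcal{T}\times[0,1/2]$ including the switches, with no reference to the quotient map. Either that repair or the paper's embed-and-restrict argument closes the gap; as written, your proof does not.
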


\begin{proof}[Proof of Lemma~\ref{lem:13}]
    We show that we can construct an embedding $\mathcal{A'}\hookrightarrow \mathcal{A''}$. Recall in the construction of $\mathcal{A''}$ we only changed $\mathcal{T}\times [0,1/2]\subset \mathcal{A'}$, a small neighborhood of $\mathcal{T}$. The embedding can then be defined to be the identity outside $\mathcal{T}\times [0,1]$, while mapping $\mathcal{T}\times [0,1]$ to $\mathcal{T}\times [3/4,1]$ via a linear map of the second factor. The lemma follows by restricting the lamination fully carried by $\mathcal{A''}$ to $\mathcal{A'}$.
\end{proof}

\begin{lem}
    $\mathcal{A''}$ fully carries a lamination.
    \label{lem:14}
\end{lem}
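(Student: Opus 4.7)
The plan is to apply Lemma~\ref{lem:3} to the branched surface $\mathcal{A''}$, which reduces the claim to verifying four conditions: $\mathcal{A''}$ is co-oriented, it is sink-disk free, it carries no torus, and no component of $\partial_h N(\mathcal{A''})$ is a disk or a sphere. The construction of $\mathcal{A''}$ has been arranged so that the first and last conditions are close to automatic, while the sink-disk condition will require the most work.

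Co-orientedness comes essentially for free from the construction: the sectors of $\mathcal{A}$ inherit their orientations from $\mathcal{B'}$, the pair-of-pants components $\partial_h^{\pm}$ inherit theirs from $N(\mathcal{C'})$, and the collapse of the boundary train track $\mathcal{T}$ was set up precisely so that the branch directions at $l_{\mathcal{C'}}^{\pm}$ continue to agree across the new branch locus. For the horizontal boundary of $N(\mathcal{A''})$ I would analyze it sector-by-sector: each component arises by gluing horizontal boundaries of $I$-bundles over sectors along vertical annuli at the two cusp circles, and a direct Euler-characteristic count, using that each pair-of-pants sector contributes $\chi = -1$ and that these get joined rather than cut further by the two double points $M$ and $N$, shows every component has nonpositive Euler characteristic and at least two boundary loops, so is never a disk or sphere. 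To rule out carried tori I would use the embedding $N(\mathcal{A''}) \hookrightarrow N(\mathcal{B'})$ together with Lemma~\ref{lem:5}: any torus carried by $\mathcal{A''}$ must lie in the knot complement and bound a solid torus, but the three circle boundary components of $\mathcal{A''}$ (the sink-bigon hole, the $\delta$-annulus boundary, and the collapsed $\mathcal{C'}$-cusp) obstruct any carried closed surface from being such a torus.

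The main obstacle will be verifying sink-disk freeness. Cutting along the branch locus of $\mathcal{A''}$ produces disk sectors arising from three places: the $\beta$-disk itself, the small disk regions cut out in the torus by the rainbow $\beta$-arcs lying in the annular region between the $(\alpha,\delta)$- and $(\alpha,\beta)$-source bigons, and any new disk components created in $\partial_h^{\pm}$ when the collapsed train track meets the other branch circles at $M$ and $N$. For each candidate I would check that at least one boundary arc carries outward-pointing branch direction; the fact that the annular torus region is bounded by source-type $\alpha$- and $\beta$-arcs on both of its boundary components immediately rules out most candidates adjacent to it, by forcing branch directions to point outward into the $\beta$-disk and into $\partial_h^{+}$. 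For disk sectors inside $\partial_h^{\pm}$ that remain problematic, I expect to invoke the sink-tube push operation developed for the primitive case in subsection~\ref{subsec:3.4}: applying this push along the sink tube of $(\Sigma, \alpha, \beta)$, which by Proposition~\ref{lem:2} contains every $\beta$-sink sector, eliminates the remaining sink disks without disturbing the other three hypotheses. Once this reduction is complete, Lemma~\ref{lem:3} delivers the desired lamination fully carried by $\mathcal{A''}$.
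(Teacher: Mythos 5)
Your overall framework is the same as the paper's: reduce Lemma~\ref{lem:14} to Lemma~\ref{lem:3}, observe that co-orientation and the topology of $\partial_h N(\mathcal{A''})$ are essentially built into the construction, and concentrate the work on making $\mathcal{A''}$ sink disk free by splittings. (On the secondary conditions: the paper rules out disk components of the horizontal boundary not by an Euler characteristic count but by noting that every boundary circle of a horizontal boundary component corresponds to a cusp or boundary circle of $\mathcal{B'}$, all of which represent the knot meridian, which bounds no disk in the knot complement; and it rules out carried tori simply because a closed surface carried by a co-oriented branched surface would be non-separating in the lens space $L$, impossible since $H_1(L,\mathbb{Q})=0$. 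Your sketches of these points are vaguer but not where the real issue lies.)

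The genuine gap is in the sink-disk step. A single sink tube push does \emph{not} eliminate all sink disks here: the paper explicitly exhibits a surviving sink disk after the first push (the shaded region of Figure~\ref{fig:24}.$(b)$). The sink tube push of subsection~\ref{subsec:3.4} finishes the job only for \emph{primitive} diagrams, where the sink tube is a spiral and one push leaves all remaining rainbow arcs coherently oriented; in the present setting $(\Sigma,\alpha,\beta,z,w)$ is by hypothesis \emph{not} primitive, and after the first push the leftover connecting arc $\beta_c$ is a rational tangle strand of some slope $\frac{p}{q}$ on a $4$-pointed sphere which can still produce sink corners. The paper's proof then has to (i) show that each such strand with $p,q\ge 2$ admits a well-defined sink tube, which requires a new symmetry argument (the $\pi$-rotations $\tau_1,\tau_2,\tau_3$ playing the role of the hyperelliptic involution), (ii) iterate \emph{complete} sink tube pushes in a Euclidean-algorithm descent on $p+q$ until $p=1$ or $q=1$, checking at every stage that the newly created double points are safe and that the ``tails'' produced do not interfere, and (iii) verify throughout that each push is legal, i.e.\ never moves the boundary arc of the removed $(\alpha,\beta)$-source bigon (the anchor $e$) and never pushes an arc across $\delta$ (since $\Sigma^+$ is cut along $\delta$). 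Your proposal neither iterates the push nor addresses these legality constraints, so as written the key claim ``applying this push \ldots eliminates the remaining sink disks'' is false in general, and the bulk of the paper's argument is missing.
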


We defer the proof of Lemma~\ref{lem:14} to subsection~\ref{subsec:3.5} where we heavily use splittings. On the other hand, by far we already have all the ingredients to prove Proposition~\ref{prop:3}, and subsequently Proposition~\ref{prop:1}.

\begin{proof}[Proof of Proposition~\ref{prop:3}]
    By Lemma~\ref{lem:14} and Lemma~\ref{lem:13} we know $\mathcal{A'}$ fully carries a lamination. Since $\mathcal{C}$ fully carries a lamination by our hypothesis, $\mathcal{C'}$, obtained by removing a disk in the interior of a sector of $\mathcal{C}$, also fully carries a lamination. Now by Lemma~\ref{lem:12} and Lemma~\ref{lem:11} $\mathcal{B}$ fully carries a lamination, as required. 
\end{proof}

\begin{proof}[Proof of Proposition~\ref{prop:1}]
    Let $(\Sigma,\alpha,\beta,z,w)$ be a non-simple reduced (1,1)-diagram. If it is primitive, then by Proposition~\ref{prop:2} the associated branched surface fully carries a lamination. If it is not, then by Proposition~\ref{prop:hierarchy} there is a finite hierarchy reducing it to a primitive one, whose associated branched surface fully carries a lamination. Now by Proposition~\ref{prop:3} and induction every reduced (1,1)-diagram in this hierarchy has its associated branched surface fully carrying laminations.
\end{proof}

\subsection{Splittings and sink tube push}
\label{subsec:3.4}

In this subsection we prove Proposition~\ref{prop:2}. In general we are to modify the branched surfaces to be sink disk free by splittings. We first recall some basic definitions.

\begin{figure}[!hbt]
    \begin{overpic}{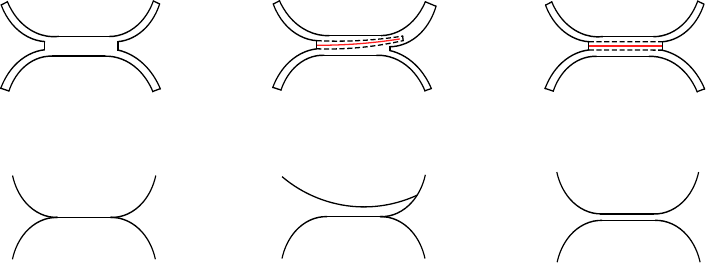}
        
    \end{overpic}
    \caption{Splittings}
    \label{fig:16}
\end{figure}

A \textbf{splitting} of a branched surface $\mathcal{B}$ along an oriented compact surface $S\subset N(\mathcal{B})$ (where $S$ is transverse to the $I$-fibers) results in a branched surface $\mathcal{B}^S$, such that there is an inclusion of $I$-bundle $N(\mathcal{B}^S)\subset N(\mathcal{B})$ respecting the $I$-fibers, where $N(\mathcal{B})\backslash N(\mathcal{B}^S)=N(S)\cong S\times I$. Notice that if $\partial S\cap \partial_v N(\mathcal{B})=\varnothing$, then the splitting is just creating a bubble, so we generally hope this does not happen. Typically, $S$ is a disk and $\partial S$ intersects $\partial_v N(\mathcal{B})$ in arcs, and the splitting is locally modelled on one of the pictures in Figure~\ref{fig:16}.

When the branched surface is obtained by pasting sectors (mostly disks in this paper) to a fixed surface $\Sigma$, we can describe the branch locus as curves on $\Sigma$. Then there is a special kind of splitting that gains importance in practice, which we call ``\textbf{pushing arcs}''. These splittings are along disks and are locally modelled on the middle picture of Figure~\ref{fig:16}. When observed on $\Sigma$, it looks like one arc (say, from curve $\alpha$) is pushed over another arc (from $\gamma$) onto the sector bounded by $\gamma$, and we say that the $\alpha$-arc is \textit{pushed onto} the $\gamma$-arc. See Figure~\ref{fig:17} for a 3-dimensional picture. We remark that one can also push arcs from the same curve.

\begin{figure}[!hbt]
    \begin{overpic}[scale=0.7]{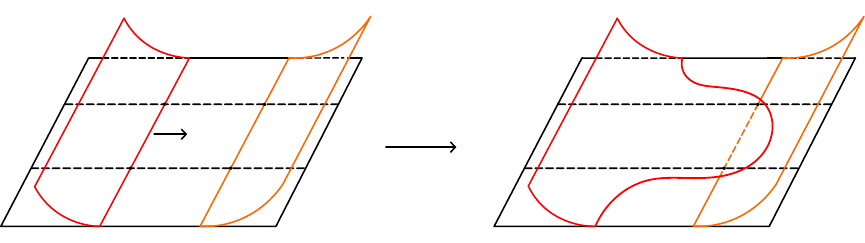}
        \put(14,11){$\color{red} \alpha$}
        \put(26,11){$\color{or} \gamma$}
        \put(83,11){$\color{or} \gamma$}
        \put(33,-1){$F$}
    \end{overpic}
    \caption{Pushing the $\alpha$-arc onto the $\gamma$-arc}
    \label{fig:17}
\end{figure}

The major advantage of talking about ``pushing arcs'' is that we can model these 3-dimensional operations on some 2-dimensional figure $\Sigma$, while the model fully reflects the operations on the branched surface. In the following discussions unless specified we will talk about the operations on the 2-dimensional model rather than directly describing them on the branched surfaces.

Now we introduce what we call the ``sink tube push''. Let $(\Sigma,\alpha,\beta,z,w)$ be a non-simple reduced (1,1)-diagram, and $\mathcal{B}$ be its associated branched surface. Recall from Definition~\ref{def:2} and Proposition~\ref{lem:2} that the $\beta$-sink tube is the connected region containing all the $\beta$-sink sectors. This tube is bounded by two $\beta$-sink $\alpha$-arcs (one of them being the $\alpha$-arc of the sink bigon) and two disjoint long $\beta$-arcs. Now we consider the location of the source bigon (since it is removed, we generally cannot push the boundary arcs of it). The boundary $\beta$-arc of the \textit{source} bigon belongs to at most one of the two long $\beta$-arcs of the sink tube. Supposing one long $\beta$-arc does contain the $\beta$-arc of the source bigon, then our ``$\beta$\textbf{-sink tube push}'' is to push the other $\beta$-arc \textit{onto} this arc. See Figure~\ref{fig:18} below for illustrations of this operation, where in $(b)$ the dashed arcs are pushed \textit{onto} the solid arcs (and are no longer on the surface $\Sigma$).

\begin{figure}[!hbt]
    \begin{overpic}[scale=0.75]{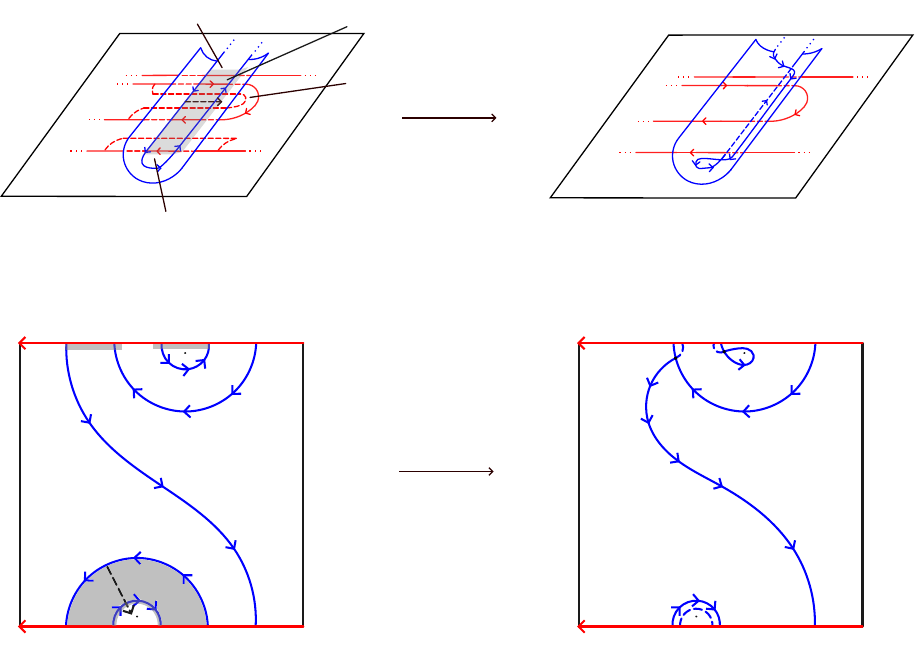}
        \put(0,70.7){polygon with sink $\alpha$-arc}
        \put(38,70){spliting disk $S$}
        \put(38.8,62){source bigon}
        \put(18,46.5){sink bigon}
        \put(20,52){$\color{ao}\beta$}
        \put(26,53.5){$\color{red}\alpha$}
        \put(22,40){$(a)$ $\beta$-sink tube push in branched surface}
        \put(22,-2){$(b)$ $\beta$-sink tube push in (1,1)-diagram (of $4_1$)}
        \put(77.4,32){\Tiny $X_1$}
        \put(70.5,32.6){\Tiny $X_2$}
    \end{overpic}
    \caption{Sink tube push}
    \label{fig:18}
\end{figure}

~\\
\begin{rems}~\
    \begin{enumerate}[(1)]
        \item Our $\beta$-sink tube push is only defined when one of the long $\beta$-arcs does contain the $\beta$-arc of the source bigon. We will always reduce to this situation when we need a sink tube push.
        \item To define a $\beta$-sink tube push, we only need a branched surface where a (disk) sector is attached to a surface $\Sigma$ along some (essential) curve $\beta \subset \Sigma$, and a well-defined ``$\beta$-sink tube'' from the information on $\Sigma$. Later we'll define similar operations on branched surfaces modified from $\mathcal{B}$, and we may still call them ``sink tube pushes''.
    \end{enumerate}
    \label{rem:4}
\end{rems}

To check if a branched surface is sink disk free, we generally need to examine all its sectors. However, when there's no ``trivial'' sink disk (we'll explain this shortly), we can instead examine the sink disk candidates around the double points. It turns out that there's only one candidate around each double point:

\begin{defn}
    Let $\mathcal{B}$ be a branched surface and $X$ a double point in its branch locus. We say a branch sector $D$ of $\mathcal{B}$ is \textit{around $X$} if $X\in \partial D$. 
    
    We can spot all sectors around $X$ in a local picture of $X$, see Figure~\ref{fig:6}.$(a)$. By enumerating these sectors, we can see that there is a unique sector $D$, s.t. there exists a neighborhood $N(X)$ of $X$, where the branch locus arc $N(X)\cap \partial D$ always has its branch direction pointing into $D$. See the upper-left corner of Figure~\ref{fig:6}.$(a)$. We call this sector $D$ the \textbf{sink corner} of the double point $X$.
    \label{def:10}
\end{defn}

\begin{lem}
    Let $\mathcal{B}$ be a branched surface. If the branch locus of $\mathcal{B}$ does not contain a simple circle disjoint from others, then a sink disk of $\mathcal{B}$ must be the sink corner of some double point.
    \label{lem:15}
\end{lem}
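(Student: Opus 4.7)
The plan is to argue by contrapositive, or equivalently, to assume $D$ is a sink disk and produce a double point $X\in \partial D$ of which $D$ is the sink corner. Since $D$ is a disk sector, $\partial D$ is a single circle sitting inside $L(\mathcal{B})$ (together possibly with $\partial\mathcal{B}$, but the sink disk condition concerns the branch locus portion). The strategy splits naturally into two cases depending on whether $\partial D$ carries any double point.

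First I would handle the case that $\partial D$ contains \emph{no} double point. Then $\partial D$ is a smooth, simple closed curve in $L(\mathcal{B})$ whose image does not meet any other component of the branch locus, for any such meeting point would be a transverse self-intersection, that is a double point, and would lie on $\partial D$. Hence $\partial D$ is a connected component of $L(\mathcal{B})$ that is a simple circle disjoint from the rest of $L(\mathcal{B})$. This directly contradicts the standing hypothesis, so this case cannot occur.

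In the remaining case there is a double point $X\in \partial D$. Here I would simply unpack the definitions: the sink disk hypothesis says the branch direction points into $D$ at every point of the branch locus portion of $\partial D$; in particular, restricting to a small neighborhood $N(X)$ of $X$, both branch arcs of $N(X)\cap \partial D$ have their branch direction pointing into $D$. By Definition~\ref{def:10}, this is precisely the characterization of the sink corner of $X$. Since the sink corner is unique among the sectors around $X$, we conclude $D$ is the sink corner of $X$, completing the argument.

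The argument is short and essentially a matter of parsing definitions, so I do not expect any serious obstacle. The only point to be careful about is that $\partial D$ could in principle interact with $\partial\mathcal{B}$ when the branched surface has boundary; however, the branch direction is only defined on $L(\mathcal{B})$, so the sink disk condition and the double-point enumeration are both statements about the branch locus portion of $\partial D$, and the case analysis above is unaffected.
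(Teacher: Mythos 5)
Your proof is correct and follows essentially the same route as the paper's: if $\partial D$ has no double point it is a simple branch-locus circle disjoint from the rest, contradicting the hypothesis, and otherwise any double point $X\in\partial D$ has $D$ as its sink corner by definition. The extra remark about $\partial\mathcal{B}$ is a reasonable clarification but does not change the argument.
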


\begin{proof}
    Suppose $D$ is a sink disk of $\mathcal{B}$. If there's no double point in $\partial D$, then $\partial D$ is a branch locus circle that's disjoint from other branch locus components, contradicting our supposition. Hence there's some double point $X\in \partial D$, and $D$ is then the sink corner of $X$.
\end{proof}

Now we're in position to prove Proposition~\ref{prop:2}.

\begin{proof}[Proof of Proposition 3.4]
    Suppose $(\Sigma,\alpha,\beta,z,w)$ is primitive and $\alpha$ is placed in standard position. We first claim that, if we use the 4-tuple notation $(p,q,r,s)$ as introduced in~\cite{rasmussen2005knot} (see Figure~\ref{fig:1}) to identify the primitive (1,1)-diagram, then $s$ is uniquely determined by $(p,q,r)$. We can look along $\alpha$ at the signs of consecutive intersections of $\alpha$ and $\beta$. At the rainbow arcs the signs are alternating; while at the vertical arcs the signs are the same. It follows that there's only one maximal group of consecutive intersections of the same sign that contains more than one element (that is, the one containing all vertical arc intersections, see Figure~\ref{fig:19} for an example, where the maximal group size is 2). Since we can observe this maximal group on both the roof and the bottom of the square, there's a unique legible way of gluing back the roof and the bottom, so that the maximal groups on the roof and the bottom are identified. This indicates that the gluing twist $s$ is determined by $(p,q,r)$, the parameters of the square. Moreover, in a primitive (1,1)-diagram the sink bigon and the source bigon share an endpoint, since they're at the middle of the (maximal) alternating intersections, see Figure~\ref{fig:19} and Figure~\ref{fig:20}.

    \begin{figure}[!hbt]
        \begin{overpic}[scale=0.7]{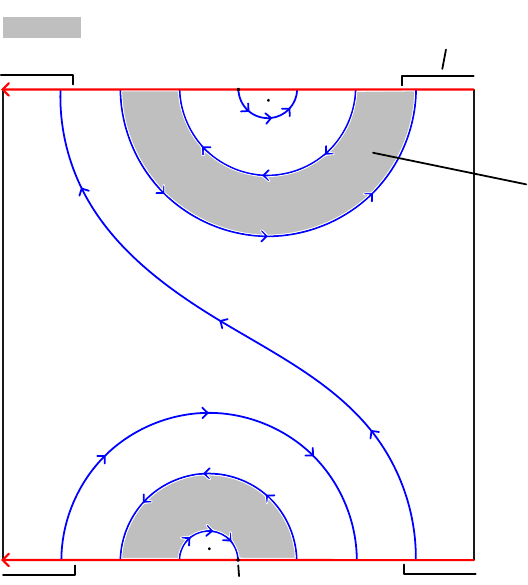}
            \put(16,93.7){\small $\beta$-sink tube}
            \put(50,93){\small consecutive intersections of the same sign}
            \put(92.6,67){\small sink disk}
            \put(30,-3){\Small shared endpoint}
            \put(11,82){\tiny $+$}
            \put(21.2,82){\tiny $-$}
            \put(31.4,82){\tiny $+$}
            \put(41.6,82){\tiny $-$}
            \put(51.7,82){\tiny $+$}
            \put(61.9,82){\tiny $-$}
            \put(72,82){\tiny $+$}
            \put(11,5){\tiny $+$}
            \put(21.2,5){\tiny $-$}
            \put(31.4,5){\tiny $+$}
            \put(41.6,5){\tiny $-$}
            \put(51.7,5){\tiny $+$}
            \put(61.9,5){\tiny $-$}
            \put(72,5){\tiny $+$}
        \end{overpic}
        \caption{A primitive (1,1)-diagram for knot $5_2$}
        \label{fig:19}
    \end{figure}

    There might be sink disks in the branched surface $\mathcal{B}$ associated to $(\Sigma,\alpha,\beta,z,w)$. Recall that by our construction branch direction is always pointing to the left of the oriented curve, so a disk sector on $\Sigma$ is a sink disk if and only if its boundary goes in the anti-clockwise direction, see Figure~\ref{fig:19} for an example. In other words, it is a sink disk if and only if all its boundary arcs are ($\alpha$- or $\beta$-)sink. Since there is a hole in the sink bigon, all sink disks on $\Sigma$ are sectors of the ($\beta$-)sink tube.

    \begin{figure}[!hbt]
        \begin{overpic}[scale=0.7]{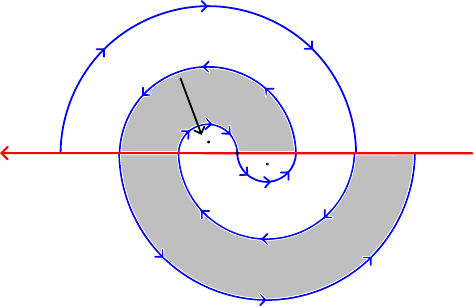}
            
        \end{overpic}
        \caption{Sink tube spiral}
        \label{fig:20}
    \end{figure}

    If there are sink disks, we can then do a ``sink tube push'' to eliminate them. Since the sink and source bigons share an endpoint, by a short induction on the number of rainbow arcs we know the sink tube is a spiral whose long $\beta$-arcs consist of rainbow arcs, see Figure~\ref{fig:20}. One then observes that all the sink sectors would have their ``outer'' rainbow $\beta$-arcs pushed. Hence after the operation all remaining rainbow arcs on $\Sigma$ are of the same direction.

    Now we can spot the double points of the branched surface $\tilde{\mathcal{B}}$ after our operation and check their sink corners. The remaining intersections of $\alpha$ and $\beta$ on $\Sigma$ still serve as double points; besides, the sink tube push would create two new double points, $X_1$ in the sink bigon and $X_2$ in the polygons. The sink corner of $X_1$ is an annulus because of the hole in the sink bigon, thus not a sink disk. The sink corner $D_2$ of $X_2$ is a disk and actually comes from a polygon on the (1,1)-diagram. Now since our sink tube push does not move an entire vertical arc (we only moved entire arcs in the spiral), $D_2$ still has a boundary $\beta$-arc that comes from a vertical arc on the (1,1)-diagram. Moreover, there are multiple such boundary $\beta$-arcs since one needs to go back to the same side of $\alpha$ (i.e. roof or bottom) while traveling along $\partial D_2$. See Figure~\ref{fig:18}.$(b)$ for an example, where $D_2$ is actually bounded by the same ``vertical'' arc twice. Since our (1,1)-diagram is primitive, the vertical arcs are in the same direction, thus $D_2$ also cannot be a sink disk. The sink corners of remaining $(\alpha,\beta)$ intersections on $\Sigma$ are all on the torus. If the sink corner has boundary $\beta$-arcs coming from vertical arcs, then as before there are multiple such boundary arcs and they are in the same direction; hence such a sink corner cannot be a sink disk. If the sink corner does not have boundary $\beta$-arcs coming from vertical arcs, then since it is not a bigon, there are multiple ``rainbow'' boundary $\beta$-arcs, and they are also in the same direction; such a sink corner also cannot be a sink disk.

    The branch locus of $\mathcal{B}$ is a single immersed curve obtained by joining $\alpha$ and $\beta$ at endpoints of the source bigon. It then follows that the branch locus of $\tilde{\mathcal{B}}$ is still a single immersed curve. Now that we have examined the sink corners of all double points of $\tilde{\mathcal{B}}$, we can use Lemma~\ref{lem:15} to conclude that $\tilde{\mathcal{B}}$ is sink disk free.
    
    Since our splitting is locally modelled on Figure~\ref{fig:16} middle, it does not change the topology of horizontal boundary, so $\tilde{\mathcal{B}}$ still has horizontal boundary components two annuli. Since $\mathcal{B}$ is transversely oriented with $N(\mathcal{B})$ homeomorphic to the (1,1)-knot complement, any closed surface carried by $\mathcal{B}$ is orientable and non-separating in the total space $L$ (a transverse closed curve would connect the two sides). Since $H_1(L,\mathbb{Q})=0$, there is no such closed surface. In particular $\mathcal{B}$, thus $\tilde{\mathcal{B}}$, cannot carry any torus. Now by Lemma~\ref{lem:3} our $\tilde{\mathcal{B}}$ fully carries a lamination, thus $\mathcal{B}$ also fully carries a lamination.
\end{proof}

\subsection{More splittings, and rational tangle strands}
\label{subsec:3.5}

It remains for us to prove Lemma~\ref{lem:14}. Our strategy is still to push arcs to eliminate sink disks, and then apply Lemma~\ref{lem:3}. In fact, the reason we modify the branched surface to $\mathcal{A''}$ is that the sink disks (double points) are better controlled.

\begin{proof}[Proof of Lemma~\ref{lem:14}]
    \begin{figure}[!hbt]
        \begin{overpic}{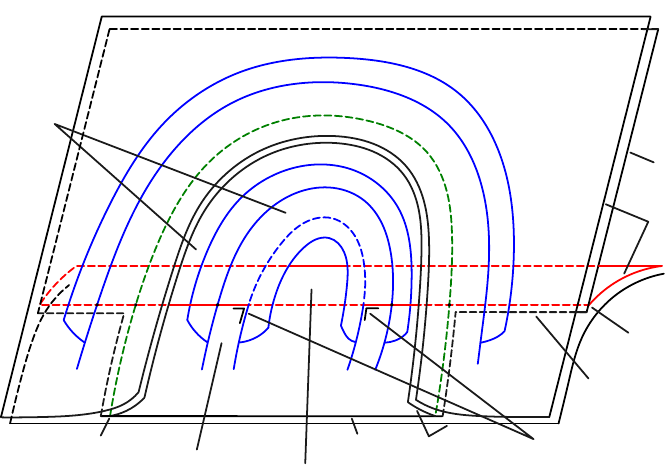}
            \put(18,-1){\Small sink disk candidate}
            \put(42,-2.5){\Small removed $(\alpha,\beta)$-source bigon}
            \put(6,2){\Small cusp from $\mathcal{C'}$}
            \put(53,2.5){\Small $\partial_h^+$}
            \put(63,2.2){\Small $\partial_h^-$}
            \put(95,19){\Small cusp from $\mathcal{C'}$}
            \put(98,35.5){\Small $\partial_h^+$}
            \put(99,44){\Small $\partial_h^-$}
            \put(81,2.8){\Small branch locus of $\mathcal{A''}$}
            \put(18,24.5){\Small $M$}
            \put(68,24.5){\Small $N$}
            \put(12.5,24.5){\Small $P$}
            \put(30.9,24.5){\Small $Q$}
            \put(56.8,24.5){\Small $R$}
            \put(-5,52){\Small sectors of $\mathcal{A}$}
            \put(86,10){\Small collapsed circle boundary}
        \end{overpic}
        \caption{$\mathcal{A''}$ near the $(\alpha,\delta)$-source bigon}
        \label{fig:21}
    \end{figure}

    We first find the sink disks in $\mathcal{A''}$. Recall from the construction that the branch locus of $\mathcal{A''}$ consists of two cusp circles, one corresponding to $\delta$ and the other ($\alpha\cup\beta-(\alpha,\beta)$-source bigon). Since the two circles do intersect at $M$ and $N$, the vertices of the $(\alpha,\delta)$-source bigon (see Figure~\ref{fig:13}), by Lemma~\ref{lem:15} we can just check the sink corners of the double points of $\mathcal{A''}$. The two double points $M,N$ at the $\mathcal{C'}$-cusp share the same sink corner, which is the annulus obtained by pinching the boundary train track of $\mathcal{A'}$, not a sink disk. The rest of the $\delta$-locus away from the $\mathcal{C'}$-cusp is the boundary of the $(\alpha,\delta)$-source bigon, where $\mathcal{A}$ is attached to $\partial_h^-$. Notice this is the only part of $\mathcal{A}$ that is attached to $\partial_h^-$, so there is no other double point on $\delta$. Remaining branch locus not discussed contains roughly the boundary $\alpha$-arc of the $(\alpha,\delta)$-source bigon and the $\beta$-curve, minus the boundary of the $((\alpha,\beta)$-source bigon), at the vertices of which the two curves are joined (see Figure~\ref{fig:21}). If we place $\alpha$ in standard position, then we can spot the double points, as the $\beta$ curve intersects the $(\alpha,\delta)$-source bigon in alternating rainbow arcs, see the shaded region in Figure~\ref{fig:12} or $Q,R$ in Figure~\ref{fig:21}. One can see that the sink corners of double points like $Q,R$ are candidates for sink disks of $\mathcal{A''}$, as in Figure~\ref{fig:21}.~\\

    \textbf{Remark}
    \begin{enumerate}
        \item The $\beta$-rainbow arcs inside the $(\alpha,\delta)$-source bigon must be alternating, since $\delta$ passes through all parallel sectors.
        \item In Figure~\ref{fig:21}, $Q,R$ are double points of $\mathcal{A''}$. However, $P$ is not a double point of $\mathcal{A''}$. In fact, here the red $\alpha$-locus is at the collapsed $\mathcal{C'}$-cusp, while the blue $\beta$-disk is attached to $\partial_h^+$ away from the cusp, so the two loci don't actually intersect. \textbf{This is the very point for us to do all these inductive constructions.} By taking horizontal boundary $\partial_h^{\pm}$ and attaching $\mathcal{A}$ to them, we manage to separate the branch locus outside the $(\alpha,\delta)$-source bigon. As a result, we only need to take care of the double points inside the $(\alpha,\delta)$-source bigon, which is a lot easier.
    \end{enumerate}
    ~\\

    Now we begin to push arcs to eliminate possible sink disks. Again, we will be checking the double points. We call a double point \textbf{safe} if its sink corner is not a sink disk. We will do multiple splittings (pushing arcs) to show that in the end we can make all double points safe. We break up our splitting procedure into several steps:
    ~\\

    \textbf{Step 1: Sink tube push on $\mathcal{A''}$.}

    Recall that to define pushing arcs, we need to regard some sectors as attached to a fixed surface, and identify their boundary branch locus as some curves on the fixed surface. Now consider how the $\beta$-disk sector is attached to the rest of $\mathcal{A''}$. Roughly speaking we can still regard it as attached to a torus $\Sigma^+$ along the $\beta$-curve, where outside the $(\alpha,\delta)$-source bigon $\Sigma^+$ consists of sectors coming from $\partial_h^+$, and at the $(\alpha,\delta)$-source bigon $\Sigma^+$ comes from sectors of $\mathcal{A}$ (see Figure~\ref{fig:21}).
    
    However, as one can observe in Figure~\ref{fig:21}, the torus $\Sigma^+$ is actually cut along $\delta$. That being said, $\Sigma^+$ actually has 2 boundary circles corresponding to $\delta$. One of them comes from the cusp of $\mathcal{A''}$, which we will denote as $\delta_{cusp}$ (see the green curve in Figure~\ref{fig:21}); the other is the $\delta$-boundary circle of $\partial_h^+$ formerly denoted as $l_{\delta}^+$, which we now denote as $\delta_{\partial_h}$. Also, since we have removed the $(\alpha,\beta)$-source bigon, the boundary arc of it is no longer part of the branch locus. So to push $\beta$-arcs on $\Sigma^+$, we need to meet the following conditions: 
    
    \begin{enumerate}
        \item we do not move the boundary arc of the $(\alpha,\beta)$-source bigon, and
        \item we do not push the arc across $\delta$.
    \end{enumerate}

    \textbf{Remark} We use $\Sigma^+$ to denote the ``torus'', because on the one hand the torus is much like our torus $\Sigma$ in the (1,1)-diagram, while on the other hand we will be only pushing the $\beta$-arc, so only considering one side of $\Sigma$.~\\

    %%this picture seems no longer needed
    %\begin{figure}safe
        %\begin{overpic}{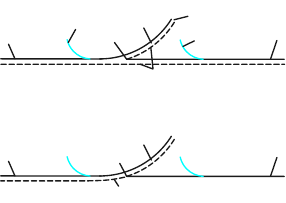}
        %    \put(0.5,58.5){$\partial_h^+$}
        %    \put(20,63){$\beta$-disk}
        %    \put(32,58.5){$\delta$-cusp}
        %    \put(42,63){$\delta$-annulus}
        %    \put(67,66){boundary of $\delta$-annulus}
        %    \put(67,58.5){$\beta$-disk}
        %    \put(96,58.8){$\partial_h^+$}
        %    \put(53,44.4){$\partial_h^-$}
        %    \put(0.5,17){$\partial_h^+$}
        %    \put(32,16.5){$\delta$-cusp}
        %    \put(42,22.5){$\delta$-annulus}
        %    \put(96,18){$\mathcal{A}$}
        %    \put(42,4){$\partial_h^-$}
        %    \put(22,37){$(a)$ Away from $(\alpha,\delta)$-source bigon}
        %    \put(30,-2){$(b)$ At $(\alpha,\delta)$-source bigon}
        %\end{overpic}
        %\caption{Attachment of the $\beta$-disk to %``$\Sigma$'' near $\delta$}
    %    \label{fig:a}
    %\end{figure}

    If there is only 1 $\beta$-arc in the $(\alpha,\delta)$-source bigon, then there is no double point besides $M,N$ (vertices of the $(\alpha,\beta)$-source bigon are not double points), and the branched surface $\mathcal{A''}$ is trivially sink disk free. Suppose there are multiple alternating $\beta$-arcs. Since the $\beta$-arcs are alternating, the $(\alpha,\beta)$-source bigon is adjacent to a $\beta$-sink sector (see Figure~\ref{fig:21}). Now we can do a ``sink tube push'' for $\beta$, according to the information of the $(\alpha,\beta)$-(1,1)-diagram (notice that the operation does not cross $\delta$, which only passes the $\beta$-parallel sectors of the $(\alpha,\beta)$-diagram and the octagon or hexagons). 

    \begin{figure}[!hbt]
        \begin{overpic}[scale=0.33]{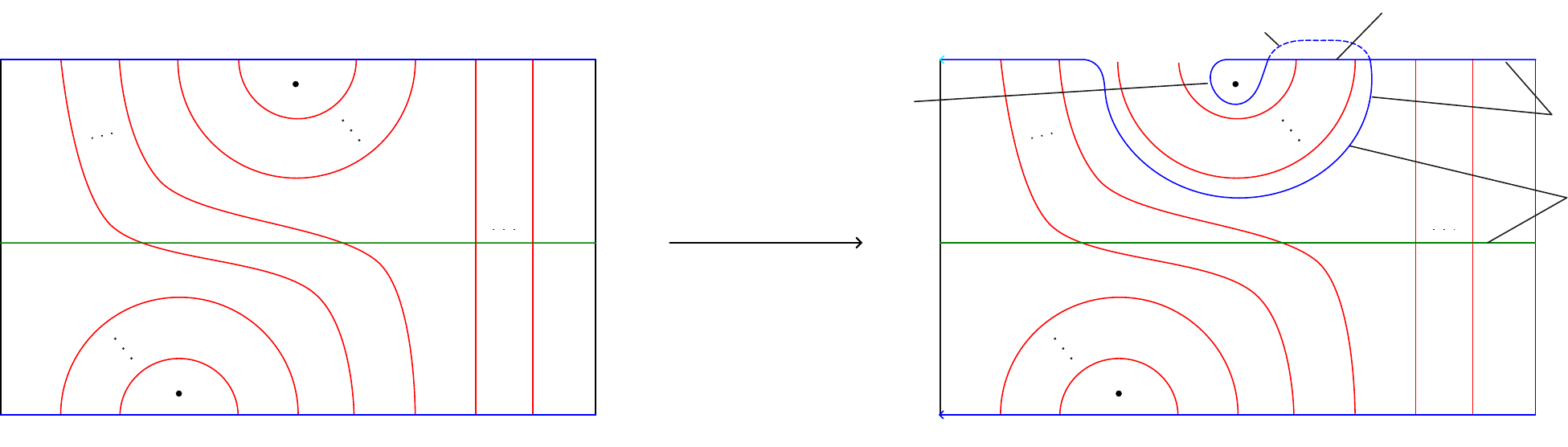}
            \put(10,3){\tiny$z$}
            \put(19,23){\tiny$w$}
            \put(36,0){\tiny $\color{ao}\beta$}
            \put(34.3,20){\tiny $\color{red}\alpha$}
            \put(36,11){\tiny $\color{green}\delta$}
            \put(100,16){\tiny parallel}
            \put(100,14){\tiny $\delta$-circles}
            \put(88.4,28){\tiny $\beta_c$}
            \put(99,20){\tiny $\beta_{\delta}$}
            \put(56,21){\tiny $\beta_s$}
            \put(54,27){\tiny arc pushed onto the $\beta$-disk}
        \end{overpic}
        \caption{($\beta$-)sink tube push for $\beta$ in standard position}
        \label{fig:22}
    \end{figure}
    
    The part of the $\beta$-curve remaining on $\Sigma^+$ (that is not pushed onto the $\beta$-disk) consists of two loops (each with a double point) and an arc connecting them. See Figure~\ref{fig:22} where we place $\beta$ in standard position instead. We denote the loop in the sink bigon $\beta_s$, the other loop $\beta_{\delta}$, and the connecting arc $\beta_c$. Notice that as shown in Figure~\ref{fig:22}, $\beta_{\delta}$ is isotopic to $\delta$ in the 2-pointed torus $(\Sigma,z,w)$, as they both intersect each vertical $\alpha$-arc exactly once. Hence on $\Sigma^+$ our $\beta_{\delta}$ is actually isotopic to one of the $\delta$-boundary components of $\Sigma^+$.

    The sink tube push produces two new double points $X_s$ on $\beta_s$ and $X_{\delta}$ on $\beta_{\delta}$. The sink corner of $X_s$ is an annulus because of the hole in the sink bigon. For $X_{\delta}$, we first notice that, $\beta_{\delta}$ can be isotoped in its branch direction to one of the $\delta$-boundary circles of $\Sigma^+$, either $\delta_{cusp}$ or $\delta_{\partial_h}$. If $\beta_{\delta}$ can be isotoped to $\delta_{\partial_h}$, the sink corner of $X_{\delta}$ is an annulus bounded by these two loops. If $\beta_{\delta}$ can be isotoped to $\delta_{cusp}$, the sink corner of $X_{\delta}$ (now a disk with some boundary $\alpha$-arcs) has a boundary arc from $\delta_{cusp}$, where the branch direction necessarily points outwards. Hence the two new double points are safe, and possible sink disks come from (sink corners of) double points at the intersection of $\beta_c$ and the boundary $\alpha$-arc of the $(\alpha,\delta)$-source bigon (i.e. the double points of $\mathcal{A''}$ that remains on $\Sigma^+$ after the sink tube push).

    \textbf{Remark} Whether $\beta_{\delta}$ can be isotoped to $\delta_{cusp}$ or $\delta_{\partial_h}$ depends on the parity of the number of alternating $\beta$-arcs inside the $(\alpha,\delta)$-source bigon. For example, in Figure~\ref{fig:21}, there is an even number of alternating $\beta$-arcs, so the branch direction of the outer $\beta$-arc containing $P$ is pointing towards $\delta$ (actually $\delta_{\partial_h}$), and this arc later becomes a part of $\beta_{\delta}$. If the number is odd, then the $\beta$-arc inside the $(\alpha,\delta)$-source bigon that is closest to the boundary $\delta$-arc would have branch direction pointing towards $\delta_{cusp}$, and would later become a part of $\beta_{\delta}$ instead. 
    ~\\

    \textbf{Step 2: Reduce $\beta_c$ to a rational tangle strand on a 4-basepoint sphere}

    \begin{figure}[!ht]
        \begin{overpic}[scale=0.65]{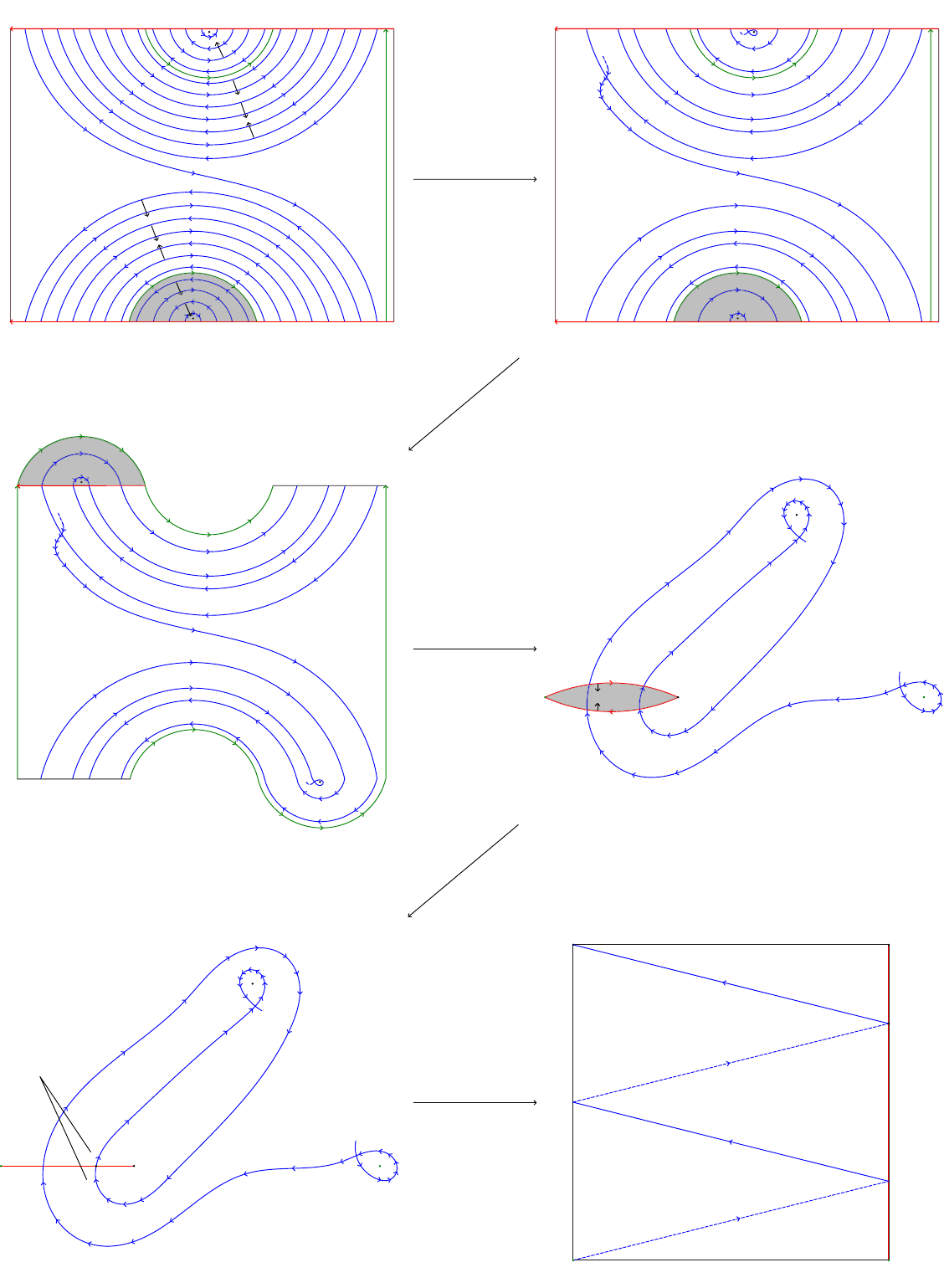}
            \put(9.5,72){\tiny knot (23,11,1,7)}
            \put(29,74.2){\Tiny $\color{red} \alpha$}
            \put(25,83){\Tiny $\color{ao} \beta$}
            \put(29,90){\Tiny $\color{green} \delta$}
            \put(9.4,74){\Tiny $M$}
            \put(19.3,74){\Tiny $N$}
            \put(29.3,98){\Tiny $M$}
            \put(10.6,98){\Tiny $N$}
            \put(14.6,74.2){\Tiny $z$}
            \put(15.6,97.9){\Tiny $w$}
            \put(31.1,87){\tiny $\beta$-sink tube push}
            \put(51.7,74){\Tiny $M$}
            \put(61.6,74){\Tiny $N$}
            \put(71.7,98){\Tiny $M$}
            \put(53.7,98){\Tiny $N$}
            \put(56.9,74.2){\Tiny $z$}
            \put(58,97.9){\Tiny $w$}
            \put(37,68){\tiny cut along $\delta$}
            \put(6,61.6){\Tiny $z(a)$}
            \put(25,39.5){\Tiny $w(d)$}
            \put(1.5,50){\Tiny $\color{green} \delta_{cusp}(b)$}
            \put(30.1,45){\Tiny $\color{green} \delta_{\partial_h}(c)$}
            \put(30.8,51){\tiny collapse circle boundaries}
            \put(61.6,59.2){\Tiny $d$}
            \put(53,46){\Tiny $a$}
            \put(41.4,45.3){\Tiny $\color{green} b$}
            \put(70.7,45.6){\Tiny $\color{green} c$}
            \put(37,32){\tiny collapse $\alpha$-bigon}
            \put(19.6,23){\Tiny $d$}
            \put(10.5,9.5){\Tiny $a$}
            \put(0,9.7){\Tiny $\color{green} b$}
            \put(28.7,9.3){\Tiny $\color{green} c$}
            \put(7.6,8.6){\Tiny $e$}
            \put(-3,17){\Tiny sink corners at $e$}
            \put(27,17.3){\tiny collapsing $\beta$-loops and}
            \put(27,16){\tiny put in standard position}
            \put(43.6,27){\Tiny $d$}
            \put(69,27){\Tiny $a$}
            \put(69.3,2){\Tiny $\color{green} b$}
            \put(43.5,2){\Tiny $\color{green} c$}
            \put(69.2,20.5){\Tiny $e$}
            \put(47.5,0){\tiny strand of rational tangle $\frac{1}{4}$}
        \end{overpic}
        \caption{Rational tangle strand from knot (23,11,1,7)}
        \label{fig:23}
    \end{figure}

    We regret to say that Step 1 is not enough: there might still be sink disks left. To get a better characterization of how $\beta_c$ intersects the $\alpha$-arc, and also to better describe our further splittings, we need to modify our model. We remark that the following ``collapsing'' operations are purely for simplifying our characterization, and do not actually happen on our branched surface.

    Recall that our $\Sigma^+$ is actually cut along $\delta$, with two boundary $\delta$-circles $\delta_{cusp}$ and $\delta_{\partial_h}$. Moreover, there is also a boundary circle in the sink bigon bounding the hole. Since we can never push arcs across these boundary circles, we can collapse(quotient) them to points. If we further mark the midpoint of the $\alpha$-arc in the $(\alpha,\beta)$-source bigon (which also cannot be moved by our pushing arcs operations), we get a sphere with 4 basepoints marked. We denote it by $(S,a,b,c,d)$, where $S$ is the sphere we get, $a$ is the midpoint of the $\alpha$-arc in source bigon, $b$ is the $\delta_{cusp}$ boundary circle, $c$ is the $\delta_{\partial_h}$ circle, and $d$ is the boundary circle in the sink bigon, see the first four pictures of Figure~\ref{fig:23}, where we use the knot (23,11,1,7) as an example and $\alpha$ is placed in standard position.
    
    Now as in the fourth picture of Figure~\ref{fig:23}, the $(\alpha,\delta)$-source bigon (see the shaded regions) becomes a bigon connecting two basepoints on $S$, bounded only by $\alpha$-arcs (that come from the boundary $\alpha$-arc of the source bigon). We can then collapse it to a \textit{single} arc connecting the basepoints $a,b$. More precisely, the bigon can be parametrized by the unit disk, so that $(\pm 1,0)$ correspond to the vertices $a,b$ and the $\beta$ arcs intersecting the bigon are vertical; by collapsing we mean a projection of the unit disk to the $x$-axis. See this procedure in Figure~\ref{fig:23} from the fourth picture to the fifth picture.
    
    Recall again that after the sink tube push the $\beta$-arcs left on ``$\Sigma$'' are two loops $\beta_s$, $\beta_{\delta}$ and a connecting arc $\beta_c$. Now in our 4-point sphere $(S,a,b,c,d)$, each of the two $\beta$-loops bounds a small disk with a basepoint inside, and its branch direction points into the disk. We then further collapse(quotient) the loops and the disks they bound to the basepoints inside. Then $\beta_c$ becomes an arc connecting the basepoint $d$ to either $b$ or $c$, and thus may be viewed as a strand of a rational tangle. See the last two pictures in Figure~\ref{fig:23}.

    We fix a frame so that $\alpha$ represents a strand of the rational tangle $\frac{1}{0}$. Since $\alpha,\beta$ come from a reduced (1,1)-diagram, their position on $S$ is tight, i.e. having no trivial bigons. Then $\beta_c$ is actually \textit{characterized by} a rational number $r=\frac{p}{q}$, where $p,q\geq 0$, $(p,q)=1$. In fact, we can place $\beta_c$ in certain ``standard position'' as in Figure~\ref{fig:23} bottom-right or Figure~\ref{fig:24}.$(a)$, where the rectangle is bubbled to represent the sphere $S$. The boundary segments of the bubbled rectangle are chosen to represent rational tangles $\frac{1}{0}$ and $\frac{0}{1}$. The solid $\beta$-segments are in the front of the bubbled rectangle, while the dashed segments are on the back of it. If we regard the rectangle as the unit rectangle, then these segments are all of slopes $\pm \frac{p}{q}$. We also recall that the numbers $p,q$ actually count the total intersections of $\beta_c$ with the $\frac{0}{1}$ and $\frac{1}{0}$ arcs $bc$ and $ab$, respectively, where we count 2 for each intersection other than the basepoints, and 1 for each intersection at the basepoints. %By carefully choosing the frame, we can further assume $0\leq r\leq 1$ (we keep 1 for technical reasons). 

    We claim that our collapsings do not collapse possible sink disks. In fact, when quotienting the $\beta$-loops to points we only quotient the disk with basepoint in the branch direction of each of the $\beta$-loops. Since the basepoint is collapsed from a circle, the collapsed regions are all bounded by the circle. However, back on the branched surface the circle is either a boundary circle, or $\delta_{cusp}$ bounding the collapsed regions opposite its branch direction. Hence by collapsing loops we are not collapsing any sink disk. On the other hand, the bigon is bounded by $\alpha$ opposite its branch direction, so we do not collapse any sink corner. Hence no sink disk is collapsed by our operations. However, we do collapse the double points in pairs (that are originally connected by $\beta$-arcs in the $(\alpha,\delta)$-source bigon, see $Q,R$ in Figure~\ref{fig:21} for an example), so in the 4-pointed sphere after collapsings each intersection of $(\alpha,\beta)$ corresponds to two sink corners, see the fifth picture of Figure~\ref{fig:23}.
    ~\\

    \textbf{Step 3: Set up ``sink tube push'' for the rational tangle strand}

    We now spot possible sink disks on $(S,a,b,c,d)$ after collapsings. Notice that now for any sector on $S$ with a boundary $\alpha$-arc, its boundary $\alpha$-arc would have branch direction pointing inwards (we have collapsed the other side of $\alpha$). So to check if a sector on $S$ is a sink disk, we only need to check the branch direction of its boundary $\beta$-arcs. See the shaded region in Figure~\ref{fig:24}.$(a)$ for an example of sink disks here. We still need to push $\beta$-arcs on $S$ to eliminate such sink disks. Now that $S$ is obtained by cutting $\Sigma^+$ along $\delta$ and collapsing, when pushing $\beta$-arcs on $S$ we automatically will not push across $\delta$. Hence we only need to guarantee that we do not move the $\beta$-boundary arc of the $(\alpha,\beta)$-source bigon. This arc, after the collapsings, is now the point $e$ as shown in Figure~\ref{fig:23} and Figure~\ref{fig:24}.$(a)$. It is the $(\alpha,\beta)$-intersection that is closest to $a$. In our following pushings, we will treat it as an anchor for $\beta_c$, since it can never be moved.

    \begin{figure}[!hbt]
        \begin{overpic}[scale=0.5]{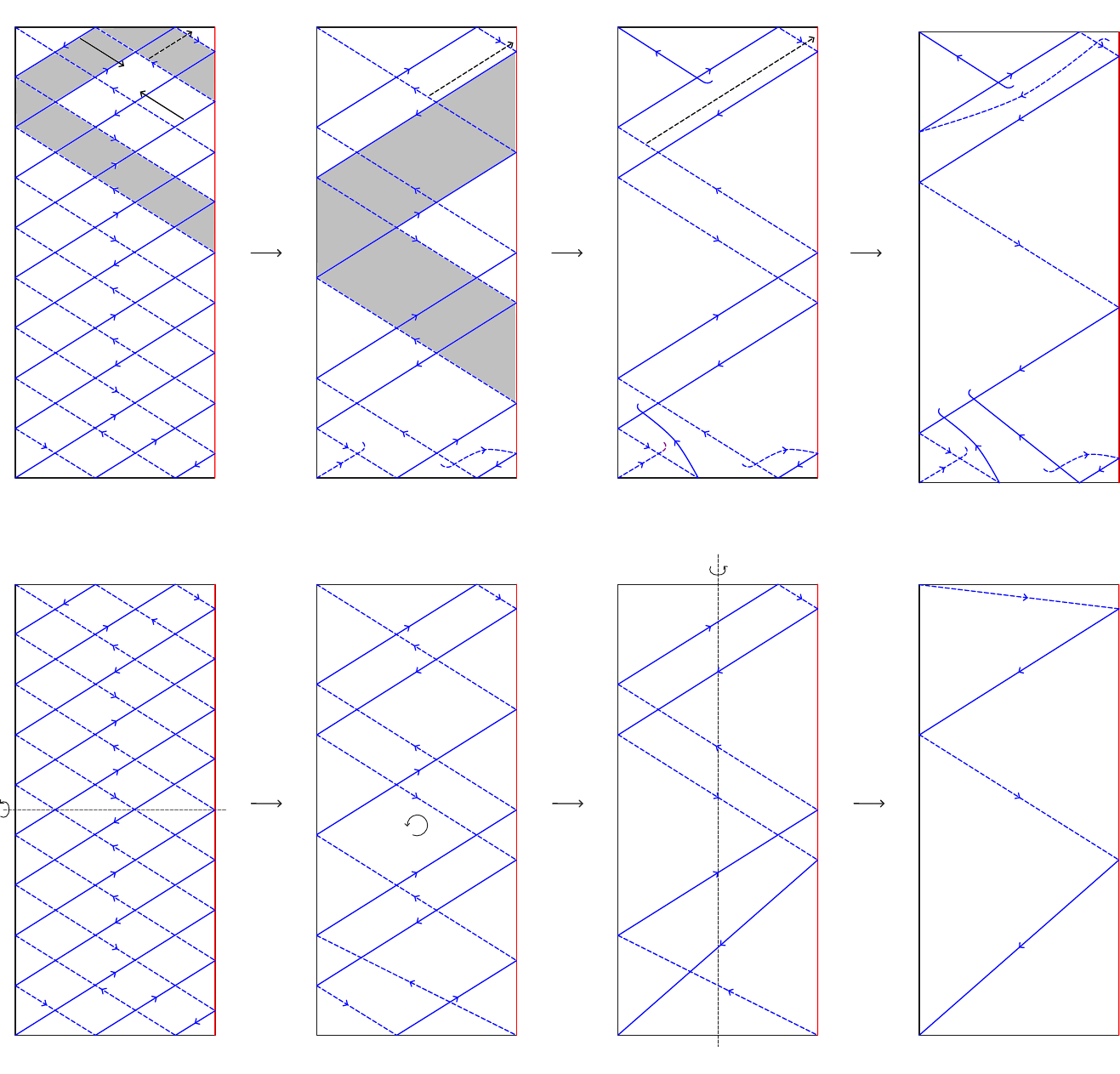}
            \put(8,50){($a$)}
            \put(35.5,49.7){($b$)}
            \put(62.2,50){($c$)}
            \put(89,49.5){($d$)}
            \put(8,0){($e$)}
            \put(35.5,0){($f$)}
            \put(62.2,0){($g$)}
            \put(89,-0.3){($h$)}
            \put(19.5,93){\tiny$a$}
            \put(19.5,53){\tiny$b$}
            \put(0,53){\tiny$c$}
            \put(0,93){\tiny$d$}
            \put(19.5,91){\tiny$e$}
            \put(-0.8,3){\tiny$P$}
            \put(15.8,41.1){\tiny$R$}
            \put(15.7,5){\tiny$Q$}
            \put(-0.5,43){\tiny$S$}
            \put(8.5,-3){\tiny$\frac{5}{18}$}
            \put(36,-3){\tiny$\frac{3}{11}$}
            \put(63,-3){\tiny$\frac{2}{7}$}
            \put(90,-3){\tiny$\frac{1}{4}$}
            %\put(42,2){\Tiny$\frac{1}{p}(\frac{1}{5})$}
            \put(-2,22){\Tiny $\tau_1$}
            \put(34,22){\Tiny $\tau_2$}
            \put(61,45){\Tiny $\tau_3$}
        \end{overpic}
        \caption{Reducing rational tangles}
        \label{fig:24}
    \end{figure}

    Since the $\alpha$-arc does intersect $\beta_c$ at $e$, which is not a basepoint, we know $q\geq 2$, and thus $p\geq 1$. \textbf{We claim that if $p=1$, then there is already no sink disk.} In fact, suppose we start tracing the $\beta_c$-arc from $d$; then since it will not intersect horizontal segments $ad$ and $bc$ again before getting to the endpoint, it will necessarily wind from $d$ all the way down to the bottom, see Figure~\ref{fig:23} bottom-right. Hence $\beta_c$ always intersects $\alpha$ in the same direction. Now consider the segments of the $\alpha$-arc cut out by $\beta$. We claim that none of them is the boundary arc of a sink disk. In fact, except the segments with an endpoint $a$ or $b$, all other segments are ``$\beta$-parallel'' in the sense of Definition~\ref{def:1}, thus cannot appear on the boundary of a sink disk. For the segment with an endpoint $b$, we notice that back on $\Sigma^+$ it is attached to a circle at $b$. According to our construction, this circle is either the boundary circle $\delta_{\partial_h}$, or a branch locus circle, whose branch direction points to the other side (i.e. not the side our $\alpha$-segment is attached to). Either way our $\alpha$-segment cannot be a boundary arc of a sink disk. The segment with endpoint $a$ is $ae$ coming from the removed source bigon, so actually is not part of the branch locus. It follows that no sink corners at the $(\alpha,\beta)$-intersections are actually sink disks, and hence our branched surface (obtained by doing a sink tube push to $\mathcal{A''}$) is already sink disk free. \textbf{From now on we suppose $p\geq 2$.} %Then $q>p\geq 2$ since $r\leq 1$ and $(p,q)=1$.

    We show that, for $\beta_c$ in standard position on $(S,a,b,c,d)$ we can again define a ``sink tube'' to push. We can regard the boundary of the rectangle $abcd$ as a simple closed curve on $S$. We call this curve ($a-b-c-d-a$) the (smoothed) \textbf{frame circle} and denote it as $\epsilon$. Now $\beta_c$ and $\epsilon$ together cut $S$ into pieces (we remark that vertices of these pieces are the intersection points of $\beta_c$ and $\epsilon$, so basepoints are not necessarily vertices). Since the $\beta$-arc is oriented, we can define $\beta$-sink, source, and parallel arcs as in Definition~\ref{def:1} for the $\epsilon$-arcs cut out by $\beta$. And then we can define $\beta$-sink, source, and parallel sectors for the quadrilateral pieces of $S$ cut out by $\beta_c$ and $\epsilon$. In fact, if we first cut along $\epsilon$ and then along $\beta_c$, we can see that we first cut $S$ into two disks, and then cut the disks with parallel $\beta$-arcs; hence we're getting mostly quadrilateral pieces, except for certain ``bigons'' near the basepoints, see Figure~\ref{fig:24}.$(a)$. (More precisely, for each of the bigons obtained by cutting disks with parallel $\beta$-arcs, we see a basepoint in the interior of its $\epsilon$-boundary; we say this bigon is \textbf{near} the basepoint.)
    
    There are two basepoints that are endpoints of $\beta$ and two basepoints that are not. For each of the two non-endpoints, we notice that the $\epsilon$-arc containing them does not contain an entire edge of the rectangle (if it does, then $\beta_c$ would intersect that edge only once at the basepoint, contradicting the fact that $p,q\geq 2$). It follows that the $\epsilon$-arc appears on the boundary of a bigon on one side and a $\beta$-sink or source sector on the other, see Figure~\ref{fig:25}.$(a)$, where we smoothed the bubbled rectangle near the basepoints. Still, we call the bigon a sink (resp. source) bigon if the boundary $\epsilon$-arc is $\beta$-sink (resp. source). 

    \begin{figure}[!hbt]
        \begin{overpic}[scale=0.9]{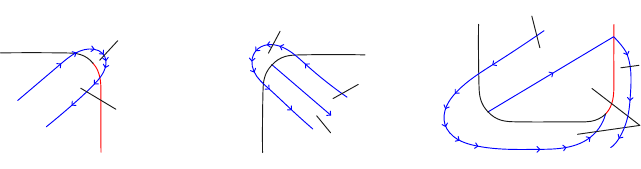}
            \put(5,-2){$(a)$}
            \put(40,-2){$(b)$ $p,q\geq 3$}
            \put(80,-2){$(c)$ $p=2$}
            \put(13,15){\Tiny $a$}
            \put(18.5,8.5){\Small source sector}
            \put(19,19){\Small bigon}
            \put(41,16){\Tiny $d$}
            \put(54,14){\Small sink sector}
            \put(45,3){\Small parallel sector}
            \put(44,22){\Small triangle}
            \put(75,8){\Tiny $c$}
            \put(93.5,8.5){\Tiny $b$}
            \put(77,25){\Small sink sector}
            \put(100.5,18){\Small source}
            \put(100.5,15){\Small sector}
            \put(100,5){\Small triangles}
        \end{overpic}
        \caption{Bigons and Triangles at the basepoints}
        \label{fig:25}
    \end{figure}
    
    On the other hand, for each of the two endpoints, the ``bigon'' \textit{near} it is actually a triangle since the endpoint itself is also a vertex, see Figure~\ref{fig:25}.$(b)(c)$. When both $p,q\geq 3$, neither of the two $\epsilon$-edges of the triangle is a whole edge of the rectangle $abcd$, so they each bound a quadrilateral on the other side. Moreover, as shown in Figure~\ref{fig:25}.$(b)$, one of them is a parallel sector and the other is a sink or source sector. When $p=2$ or $q=2$ (notice it cannot be both since $(p,q)=1$), the two triangles corresponding to endpoints share an $\epsilon$-edge, which is actually an entire edge of the rectangle $abcd$. Moreover, since that $\beta_c$ has $(p+q-2)$ intersections with $\epsilon$ other than the endpoints, and that exactly one of $p,q$ is even (they are coprime and one equals 2 by our case supposition), away from endpoints $\beta_c$ intersects $\epsilon$ an odd number of times. Hence near the endpoints the $\beta_c$-threads are on different sides of $\epsilon$, see Figure~\ref{fig:25}.$(c)$. Now by our definition the shared $\epsilon$-edge of the two triangles is $\beta$-parallel, and the two quadrilaterals bounded by the not-shared $\epsilon$-edges of the two triangles are exactly one sink sector and one source sector, as in Figure~\ref{fig:25}.$(c)$.

    Recall that in the (1,1)-diagram case, we are able the define the sink tube since there are only two $\beta$-sink arcs among the boundary $\alpha$-arcs of non-quadrilateral pieces (so that one is eventually connected to the other by pasting $\beta$-sink sectors). To establish a similar construction for $(S,\beta_c,\epsilon)$, we need to verify the same thing, i.e. there are only two $\beta$-sink arcs among the boundary $\epsilon$-arcs of the bigons and triangles. We achieve this by introducing symmetries of $\beta_c$ on $(S,a,b,c,d)$. One should think of this as an analogue to the hyperelliptic involution in the (1,1)-diagram case.

    There are $\pi$-rotations of $S$ that keep $\epsilon$ invariant, while interchanging the basepoints in pairs. We regard $(S,a,b,c,d)$ as the bubbled rectangle, and define $\tau_1$, $\tau_2$, and $\tau_3$ as in Figure~\ref{fig:24}.$(e)$, $(f)$, and $(g)$, respectively. More precisely, $\tau_1$ and $\tau_3$ interchange the front rectangle and the back rectangle, and then on each rectangle act as a reflection in the axis; $\tau_2$ keeps the front and back rectangles invariant, while acting as a $\pi$-rotation or central reflection on each rectangle. We see that $\tau_1$ interchanges the basepoints $a,b$ and $c,d$, $\tau_2$ interchanges $a,c$ and $b,d$, and $\tau_3$ interchanges $a,d$ and $b,c$.
    ~\\

    \textbf{Claim.} If the endpoints of $\beta_c$ are interchanged by $\tau_{i_0}$, then $\tau_{i_0}(\beta_c)$ is isotopic to $\beta_c$ on $(S,a,b,c,d)$, but has its orientation reversed.

    \begin{proof}[Proof of the claim]
        Since $\beta_c$ and $\epsilon$ are in tight position on $(S,a,b,c,d)$, we know $\tau_{i_0}(\beta_c)$ and $\tau_{i_0}(\epsilon)=\epsilon$ are also in tight position. Hence $\tau_{i_0}(\beta_c)$ is also characterized by a rational number $\frac{p'}{q'}$, $p',q'\geq 0$ and $(p',q')=1$. Now since $\tau_{i_0}$ interchanges the basepoints, it sends the rectangle edge $ab$ to either itself or $cd$. Let $\mathrm{IC}(\beta_c,ab)$ be the intersection count of $\beta_c$ and $ab$, where we recall we count 1 for each basepoint intersection and 2 for each non-basepoint intersection. Noticing that $\mathrm{IC}(\tau_{i_0}(\beta_c),ab)=\mathrm{IC}(\tau_{i_0}(\beta_c),cd)=q'$, we have \[q=\mathrm{IC}(\beta_c,ab)=\mathrm{IC}(\tau_{i_0}(\beta_c),\tau_{i_0}(ab))=\mathrm{IC}(\tau_{i_0}(\beta_c),ab)=q'.\] Similarly we can show $p=p'$. Hence $\tau_{i_0}(\beta_c)$ is actually isotopic to $\beta_c$ on $(S,a,b,c,d)$. Moreover, since $\tau_{i_0}$ interchanged the endpoints of $\beta_c$, the starting point of our oriented $\tau_{i_0}(\beta_c)$ is actually the ending point of $\beta_c$, and hence the orientation is reversed.
    \end{proof}
    
    The claim gives us the symmetry of $\beta_c$ we need. According to the endpoints of our $\beta_c$, we can pick a suitable $\tau_{i_0}$ interchanging its endpoints. Now since $\tau_{i_0}$ does not fix any basepoint, it cannot fix any bigon. Since there are exactly two bigons, each is obtained from the other by applying the involution $\tau_{i_0}$ and then reversing the orientation of $\tau_{i_0}(\beta_c)$. The image of a $\beta$-sink (resp. $\beta$-source) bigon under $\tau_{i_0}$ is still $\beta$-sink (resp. $\beta$-source), but reversing the orientation of $\tau_{i_0}(\beta_c)$ makes it $\beta$-source (resp. $\beta$-sink). Hence there are exactly one sink bigon and one source bigon. Similarly, there are exactly one triangle with a $\beta$-sink $\epsilon$-edge and one triangle with a $\beta$-source $\epsilon$-edge (recall a triangle always contains a $\beta$-parallel $\epsilon$-edge). Hence for all the non-quadrilateral regions, there are only two boundary $\epsilon$-arcs that are $\beta$-sink(resp. source). It then follows that the two arcs must be connected by pasting $\beta$-sink(resp. source) sectors along the boundary $\epsilon$-arcs. Thus we can still define the \textbf{sink (resp. source) tube} here, and a statement parallel to Proposition~\ref{lem:2} still holds.

    We now introduce the ``sink tube push'' in this case. Recall that since we have already cut along $\delta$, when pushing the arcs we only need to make sure that we do not move the boundary $\beta$-arc of the source bigon, or the point $e$ after collapsings as in Figure~\ref{fig:24}.$(a)$. We used to guarantee this by manually spotting the source bigon at one of the long $\beta$-arcs of the sink tube, but for our rational tangles there is a more natural way. One can define each $\beta$-segment cut out by $\epsilon$ to be of length 1. By the previously discussed symmetry of $\beta_c$ we can describe it as Figure~\ref{fig:26}.$(a)$ upper-left, where $P,S$ are endpoints of $\beta_c$, $Q$ is the midpoint of the boundary $\beta$-arc of the sink bigon, and $R$ the midpoint of the boundary $\beta$-arc of the source bigon, s.t. $P,Q,R,S$ appear in order and $\mathrm{d}(P,Q)=\mathrm{d}(R,S)$. We then define our ``sink tube push'' to push the long $\beta$-arc of the sink tube in $PQ$ onto the other long $\beta$-arc in $QS$. Moreover, we modify our operation at the ends of the tube, so that near the $\beta$-sink bigon we obtain a $\beta$-loop bounding a basepoint (see bottom-right corner of Figure~\ref{fig:24}.$(a)(b)$, or Figure~\ref{fig:26}.$(b)$), and near the triangle with $\beta$-sink edge containing $P$ we push the $\beta$-arc connected to $P$ onto the $\beta$-edge of the triangle (see the bottom-left corner of Figure~\ref{fig:24}.$(a)(b)$, or the first arrow in Figure~\ref{fig:26}.$(c)$). Since $e$ is a vertex of the source bigon, $\mathrm{d}(e,R)=1/2$. Since $R$ is not on $PQ$, $e$ is also not on $PQ$. Thus our operation does not move $e$ and is hence a legal push.

    \begin{figure}[!hbt]
        \begin{overpic}[scale=0.5]{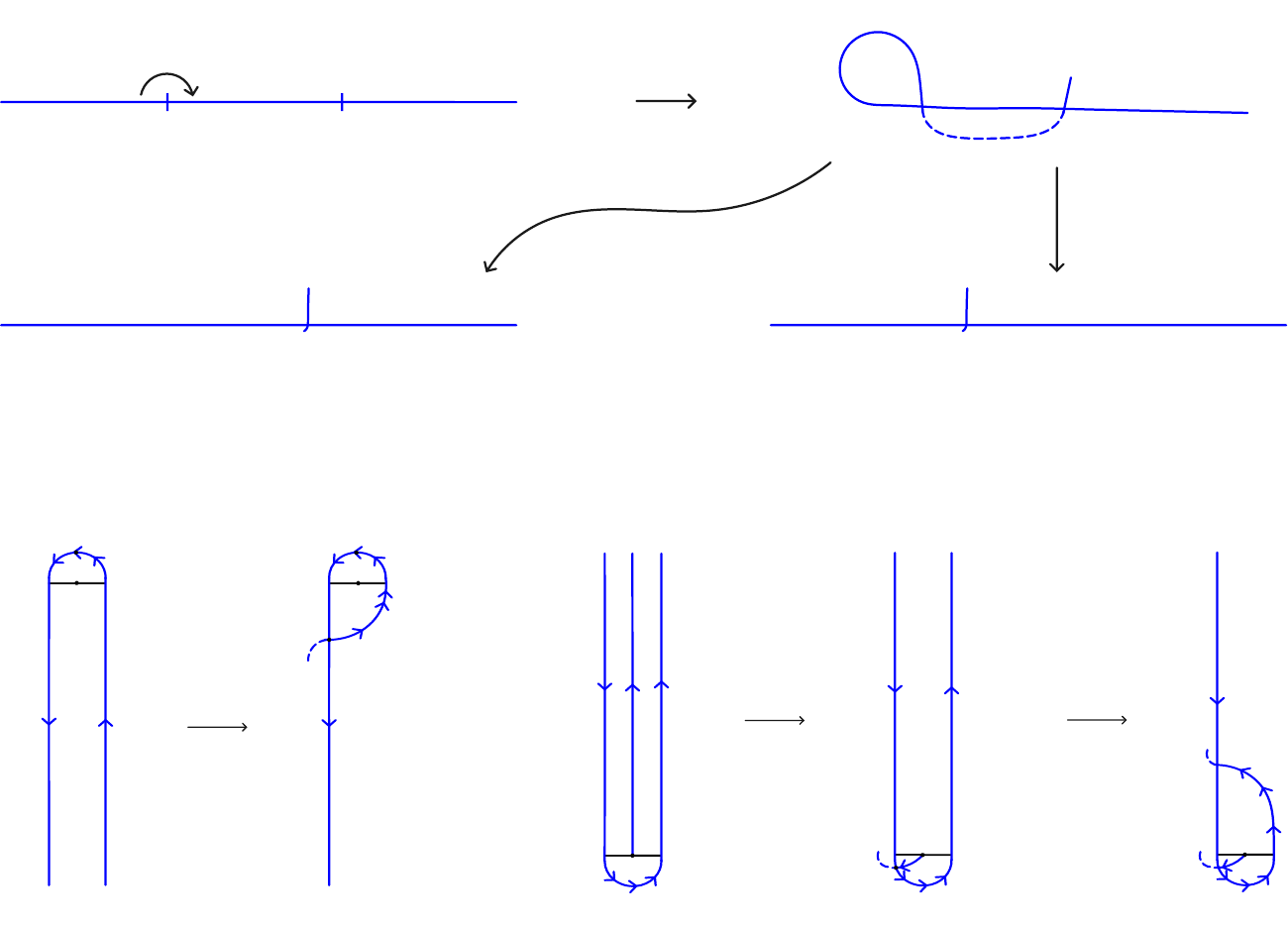}
            \put(50,36){$(a)$}
            \put(15,-1){$(b)$}
            \put(70,-1){$(c)$}
            \put(-1,62){\tiny$P$}
            \put(12,62){\tiny$Q$}
            \put(25.5,62){\tiny$R$}
            \put(39,62){\tiny$S$}
            \put(83,66){\tiny$P$}
            \put(66,62){\tiny$Q$}
            \put(97,62){\tiny$S$}
            \put(25,55){\tiny$|PQ|>|QR|$}
            \put(84,55){\tiny$|PQ|<|QR|$}
            \put(23,50){\tiny $P$}
            \put(-2,44.5){\tiny $Q(S')$}
            \put(10,44.5){\tiny $R(R')$}
            \put(21,44.5){\tiny $X_p(Q')$}
            \put(38,44.5){\tiny $S(P')$}
            \put(74,50){\tiny $P$}
            \put(57,44.5){\tiny $Q(P')$}
            \put(72,44.5){\tiny $X_p(Q')$}
            \put(84,44.5){\tiny $R(R')$}
            \put(97,44.5){\tiny $S(S')$}
            \put(49,6){\tiny $P$}
            \put(71,6){\tiny $P$}
            \put(67,3){\tiny $X_p$}
            \put(5,30){\tiny $Q$}
            \put(27,30){\tiny $Q$}
            \put(22.3,23){\tiny $X_q$}
        \end{overpic}
        \caption{Sink tube push for the rational tangle strand}
        \label{fig:26}
    \end{figure}

    We now show that the new double points created by our sink tube push for $\beta_c$ are all safe. There are two new double points: $X_q$ on the new $\beta$-loop $\beta_q$ near $Q$, and $X_p$ on the $\beta$-edge of the triangle near $P$, which is connected to $P$ by a short arc in the triangle, see Figure~\ref{fig:26}.$(b)(c)$. Every sink corner of possible double points on $\beta_q$ has the basepoint inside on its boundary (there might be double points other than $X_q$ if the basepoint is $b$, where $\alpha$ intersects $\beta_q$). Since $a$ is on the boundary of the source bigon (of $(S,\beta_c,\epsilon)$), the basepoint inside $\beta_q$ cannot be $a$. Hence it is obtained by collapsing a circle. Again, back on $\Sigma^+$ the circle before collapsing is either a boundary circle of the branched surface or a branch locus circle whose branch direction points to the other side. Sink corners of double points on $\beta_q$ all have part of this circle on their boundaries, thus cannot be sink disks. The sink corner of $X_p$ has a boundary arc $PX_p$, and thus has the basepoint $P$ on its boundary. Now since $P$ is an endpoint of $\beta_c$, it is not $a$, and is hence obtained by collapsing a $\beta$-loop. Yet again, back on $\Sigma^+$ the sink corner of $X_p$ has a boundary arc belonging to this collapsed loop, where the branch direction points to the other side. It follows that $X_p$ is also safe. 
    
    We do a further collapsing to prepare for the next step. Since double points on $\beta_q$ are all safe, we can further collapse (or quotient) $\beta_q$ and the disk it bounds in its branch direction on $S$ to the basepoint inside, without destroying possible sink disks. See Figure~\ref{fig:26}.$(a)$. For the purpose of the next step, we describe a \textbf{complete} ``sink tube push'' procedure for a rational tangle strand to include both the arc-pushing and the collapsing of the loop afterwards.
    ~\\

    \textbf{Step 4: Perform sink tube push repeatedly to eliminate sink disks}
    
    Unfortunately, only one sink tube push for $\beta_c$ is unlikely to eliminate all sink disks, see the shaded region of Figure~\ref{fig:24}.$(b)$ for a sink disk that remains after the sink tube push. Our strategy is to perform sink tube pushes repeatedly until we can guarantee that there is no sink disk left. To do this we need to obtain a new $\beta$-arc connecting basepoints.
    
    Now if we temporarily ignore the short $\beta$-segment $PX_p$ (we call it a \textbf{tail}), we get a new $\beta$-arc $\beta_c'$ connecting $Q$ and $S$ (see Figure~\ref{fig:26}.$(a)$ upper-right, notice the loop at $Q$ is collapsed). This $\beta_c'$ is represented by some rational number $\frac{s}{t}$, where $s,t\geq 0$, $(s,t)=1$ and $s+t<p+q$ (the number of intersections is decreasing at least by 1 by collapsing the $\beta$-loop at $Q$ to the basepoint). Moreover, if $s,t\geq 2$, we still can define the sink tube for $(S,\beta_c',\epsilon)$. Since we ignore the tail $PX_p$, $P$ is no longer a vertex, and thus the triangle of $(S,\beta_c,\epsilon)$ at $P$ turns to a bigon of $(S,\beta_c',\epsilon)$; in fact, the triangle with a \textbf{sink} $\epsilon$-edge becomes the \textbf{sink} bigon of $(S,\beta_c',\epsilon)$, as shown in the middle picture of Figure~\ref{fig:26}.$(c)$. On the other hand, the source bigon of $(S,\beta_c',\epsilon)$ is the same as the source bigon of $(S,\beta_c,\epsilon)$ at $R$. Depending on whether $\mathrm{d}(P,Q)>\mathrm{d}(Q,R)$ or $\mathrm{d}(P,Q)<\mathrm{d}(Q,R)$ (they cannot be equal since $\mathrm{d}(P,Q)$ contains a half length at $Q$ while $\mathrm{d}(Q,R)$ contains two halves, one at each end, and is thus an integer), we can set new $P',Q',R',S'$ as in the bottom two pictures of Figure~\ref{fig:26}.$(a)$ (we remark that one can identify $Q'$, the midpoint of the boundary $\beta$-arc of the new sink bigon, with $X_p$). We can then do a ``sink tube push'' for $\beta_c'$, pushing our new $P'Q'$ onto $Q'S'$, in the same sense as before. Notice that since the tail $PX_p$ is inside the new sink bigon, it is not moved or moved over by our new push, see the third picture of Figure~\ref{fig:26}.$(c)$ (hence we are good to first ignore the tail and define a sink tube push with $\beta_c'$).

    It is still true that the new double points created by pushing $\beta_c'$ are safe. We still get two new double points: $X_{q}'$ on the new loop $\beta_q'$ near $Q'$, and $X_p'$ connected to $P'$ by another short arc. In fact, the only difference from the $\beta_c$ case is the tail inside $\beta_q'$. However, with or without the tail, all sectors inside the small disk bounded by $\beta_q'$ in its branch direction have the basepoint $P$ on their boundaries. Again and again, back on $\Sigma^+$ these sectors have boundary arcs on a branch locus circle collapsed to $P$, whose branch direction points outwards to the other side, see Figure~\ref{fig:27}. It follows that the sink corners of possible double points on $\beta_q'$ cannot be sink disks; in particular $X_q'$ is safe. By essentially the same argument as the $\beta_c$ case we can show that $X_p'$ is also safe.

    Since in fact all double points on $\beta_q'$ are safe, we can still collapse this loop and the small disk it bounds to the basepoint $P$ inside, without collapsing sink disks. This way we successfully defined another \textit{complete} sink tube push procedure.

    \begin{figure}[!hbt]
        \begin{overpic}[scale=0.4]{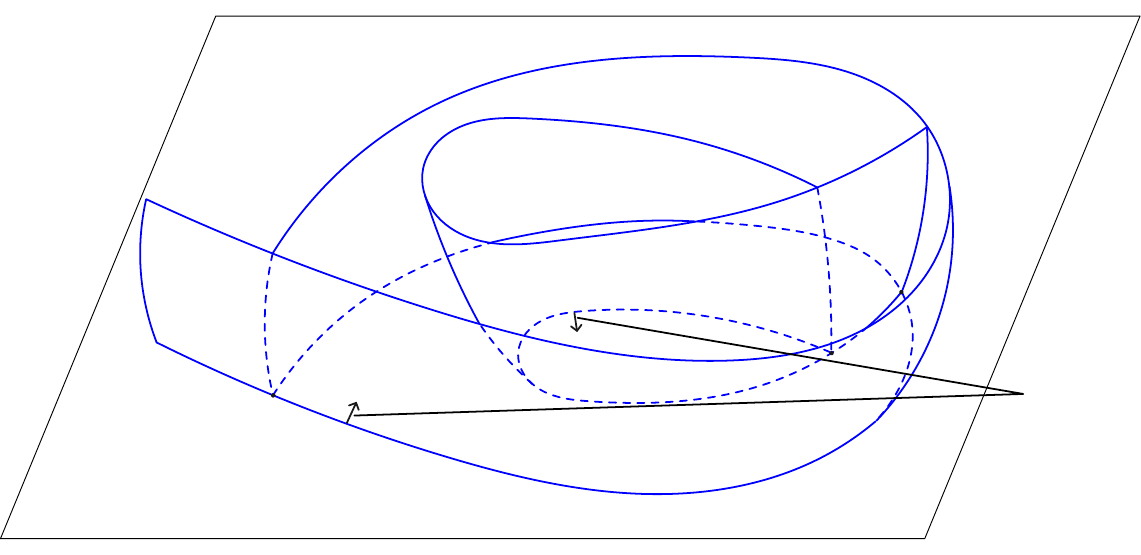}
            \put(90,13){\Small branch directions}
        \end{overpic}
        \caption{Nested $\beta$-circles to be collapsed}
        \label{fig:27}
    \end{figure}
    
    If we ignore the tail from the new $\beta_c'$-sink tube push, we get yet another $\beta$-arc connecting basepoints. It follows that we can \textit{repeatedly} perform \textit{complete} ``sink tube push'' procedures for rational tangle strands, without getting new double points that are not safe or collapsing existing unsafe double points, whenever the rational number $\frac{p}{q}$ with $(p,q)=1$ characterizing the $\beta$-arc has $p,q\geq 2$. Each sink tube push creates a new tail. However, the tail is in the sink bigon of the next sink tube push (if there is a next one), and will be enclosed in loop and collapsed by the next (complete) sink tube push. See Figure~\ref{fig:24} for an example. 

    We remark that the endpoints of the $\beta$-arc change as the rational tangle strand changes. In fact, the endpoints of $\beta$ are determined by the parity types of the rational number: if the rational number is odd/even, then the endpoints of $\beta$-arc are $c$ and $d$, if odd/odd, then the endpoints are $b$ and $d$, and if even/odd the endpoints are $b$ and $c$. We also remark that our sink tube push can be performed regardless of the location of the endpoints, as long as the source bigon is at $a$.

    Since $p+q$ is decreasing, this process of repeated complete sink tube pushes will terminate. It terminates when either $p=1$ or $q=1$. We show that now every double point on $S$ is safe. The remaining $\beta$-arcs on $S$ contain a rational tangle strand connecting two basepoints of the three $\{b,c,d\}$ and a tail from the last sink tube push (notice we are not able to further enclose this in a loop and collapse). However, the double point on this tail is safe (connected to a basepoint), so we still only need to pay attention to the double points from the intersection of $\alpha$ and the $\beta$-rational tangle strand. If $q=1$, then $\alpha$ and $\beta$ only intersect at $b$; but this is impossible, since our anchor $e$ is not moved and serves as another intersection. Hence $p=1$. Now similarly to our argument before (at the beginning of Step 3), the $\beta$-rational tangle strand intersects $\alpha$ always in the same direction, and all double points are safe. It then follows that after our repeated operations we get a branched surface $\mathcal{A'''}$ that has no sink disk.
    ~\\

    We are now in position to apply Lemma~\ref{lem:3}. We claim that no horizontal boundary component of $\mathcal{A'''}$ is a disk. Since our splittings are always along disks and modelled on Figure~\ref{fig:16} middle, they do not change the topology of the horizontal boundary, hence we only need to check for $\mathcal{A''}$. Now consider horizontal boundary components of $\mathcal{A''}$. The boundary of these horizontal boundary components must correspond to the boundary or branch locus of $\mathcal{A''}$. For the boundary circle of $\mathcal{A''}$ corresponding to the $\mathcal{C'}$-cusp, there are two components of horizontal boundary bounded by it, each homeomorphic to $\partial_h^{\pm}$ (they are the sides of $\partial_h^{\pm}$ that $\mathcal{A}$ is not attached to), thus not disks. Other possible boundary circles are $\delta$, the $(\alpha,\beta)$-cusp, and the boundary circle in the sink bigon - exactly the cusps and boundary circles that remain in $\mathcal{B'}$. Hence for a disk horizontal boundary component in $\mathcal{A''}$, we can find a corresponding properly embedded disk in the regular neighborhood of $\mathcal{B'}$. However, $N(\mathcal{B'})$ is homeomorphic to the (1,1)-knot complement, and we recall from proof of Lemma~\ref{lem:11} that all the boundary circles and cusps of $\mathcal{B'}$ correspond to the meridian of the knot. Our claim then follows from the fact that there is no disk bounded by meridian in the knot complement.

    Also since $\mathcal{B'}$ is co-oriented, it cannot carry a sphere or a torus, for they will then be non-separating in the total lens space. Hence $\mathcal{A'''}$ also cannot carry any sphere or torus. Now we can use Lemma~\ref{lem:3} and conclude that $\mathcal{A'''}$ fully carries a lamination. Hence $\mathcal{A''}$ also fully carries a lamination.
\end{proof}

    %If we regard the framing rectangle as the unit square and extend it to the plane by reflections, we can identify our original rational tangle strand as a segment $K$ connecting $(0,0)$ and $(q,p)$. Then by our construction of $\frac{s}{t}$, we see that $(t,s)$ is either of horizontal distance $\frac{1}{p}$ or vertical distance $\frac{1}{q}$ to $K$ (see Figure~\ref{fig:24}.$(f)$). It follows that $|pt-qs|=1$, and thus we have $0\leq \frac{s}{t}\leq 1$.

\section{Co-oriented taut foliations from laminations}
\label{sec:4}

In this section we construct co-oriented taut foliations in non-simple (1,1)-knot complements.

\begin{proof}[Proof of Theorem~\ref{thm}]
    Let $(\Sigma,\alpha,\beta,z,w)$ be some non-simple reduced (1,1)-diagram representing the non-simple (1,1)-knot, and $\mathcal{B}$ be the associated branched surface. Notice that we actually have $N(\mathcal{B})$ homeomorphic to the knot complement $M$ by Lemma~\ref{lem:5}. Now by Proposition~\ref{prop:1}, $\mathcal{B}$ fully carries some lamination $\mathcal{L}\subset N(\mathcal{B})$, and by blowing up leaves we can further assume $\partial_h N(\mathcal{B})\subset \mathcal{L}$. Since our $\mathcal{B}$ is co-oriented by definition, components of $N(\mathcal{B})-\mathcal{L}$ are trivial $I$-bundles over oriented surfaces, and $\mathcal{L}$ extends to a co-oriented foliation $\mathcal{F}$ of $N(\mathcal{B})$.

    \begin{figure}[!ht]
        \begin{overpic}[scale=0.7]{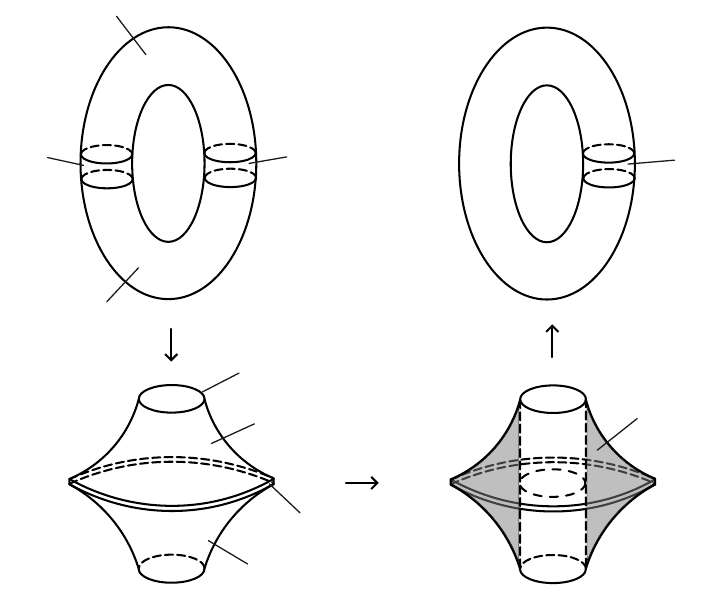}
            \put(-2,61){cusp}
            \put(40,61){boundary}
            \put(12,81){$\tilde{\partial_h^+}$}
            \put(10.5,39.5){$\tilde{\partial_h^-}$}
            \put(34,31.5){boundary}
            \put(36,24){$\tilde{\partial_h^+}$}
            \put(42,10){cusp}
            \put(35,4){$\tilde{\partial_h^-}$}
            \put(89,25){(annulus)$\times I$}
            \put(94,60.5){boundary}
        \end{overpic}
        \caption[Fig 13]{Attaching thickened annulus}
        \label{fig:28}
    \end{figure}

    Foliation $\mathcal{F}$ is tangent to the boundary torus of $N(\mathcal{B})$ at the two annular horizontal boundary components $\tilde{\partial_h^+},\tilde{\partial_h^-}$, and transverse to the vertical boundary of $N(\mathcal{B})$. Moreover, $\mathcal{F}$ intersects the annulus of vertical boundary at the cusp in a product foliation by circles. We can thus extend $\mathcal{F}$ by attaching some (annulus$\times I$) (foliated trivially by the annuli) to the cusp, connecting the two annular horizontal boundary components. (See shaded part of Figure~\ref{fig:28}.) The new foliation $\mathcal{F}'$ still foliates a 3-manifold homeomorphic to $M$, and intersects the boundary torus transversely in a suspension foliation of the meridional slope, with possible nontrivial holonomy at the annulus coming from the circle boundary of $\mathcal{B}$, and product foliation elsewhere.

    It remains to show that $\mathcal{F}'$ is taut (it is co-oriented since $\mathcal{F}$ is). In fact, to each $I$-fiber of $N(\mathcal{B})$ we can take a transverse arc connecting its endpoints in the (annulus$\times I$) attached; thus by joining the $I$-fiber with this arc we get a closed transverse circle. Since each leaf of $\mathcal{F}'$ must pass through some $I$-fiber of $N(\mathcal{B})$, to each leaf there is a transverse circle passing through it, and $\mathcal{F}'$ is hence taut.
\end{proof}
~\\

\bibliography{references.bib}
\bibliographystyle{alpha}

\end{document}